\author{Helena Kremp, Nicolas Perkowski}
\title{Periodic homogenization for singular SDEs}
\newtheorem{theorem}{Theorem}[section]
\newtheorem{definition}[theorem]{Definition}     
\newtheorem{proposition}[theorem]{Proposition}
\newtheorem{lemma}[theorem]{Lemma}	
\newtheorem{corollary}[theorem]{Corollary}
\newtheorem{remark}[theorem]{Remark}
\newtheorem{assumption}[theorem]{Assumption}
\numberwithin{equation}{section}
\newcommand{\R}{\mathbb{R}}
\newcommand{\N}{\mathbb{N}}
\newcommand{\E}{\mathds{E}}
\newcommand{\p}{\mathds{P}}
\newcommand{\F}{\mathcal{F}}
\newcommand{\La}{\mathcal{L}^{\alpha}_{\nu}}
\newcommand{\calC}{\mathscr{C}}
\newcommand{\calX}{\mathscr{X}}
\renewcommand{\mathcal}[1]{\mathscr{#1}}
\renewcommand{\epsilon}{\varepsilon}
\newcommand{\para}{\varolessthan}
\newcommand{\arap}{\varogreaterthan}
\newcommand{\reso}{\varodot}
\newcommand{\squeeze}[2][0]{\mbox{$\medmuskip=#1mu\displaystyle#2$}}
\DeclarePairedDelimiter{\abs}{\lvert}{\rvert}
\DeclarePairedDelimiter{\norm}{\lVert}{\rVert}
\DeclarePairedDelimiter{\paren}{(}{)}
\begin{document}

\begin{center}
\begin{huge}
Periodic homogenization for singular Lévy SDEs\\
\end{huge}
\begin{Large}
\vspace{0.5cm}
Helena Kremp\footnote{Technische Universität Wien, helena.kremp@asc.tuwien.ac.at}, Nicolas Perkowski\footnote{Freie Universität Berlin, perkowski@math.fu-berlin.de}
\end{Large}
\vspace{1cm}
\hrule
\end{center}
\begin{section}*{Abstract}
We generalizes the theory of periodic homogenization for multidimensional SDEs with additive Brownian and stable Lévy noise for $\alpha\in (1,2)$ (cf.~\cite{Bensoussan1978, Franke2007}) to the setting of singular periodic Besov drifts $F\in(\calC^{\beta}(\mathbb{T}^{d}))^{d}$ for $\beta\in ((2-2\alpha)/3,0)$ beyond the Young regime. For the martingale solution from \cite{kp} projected onto the torus, we prove existence and uniqueness of an invariant probability measure $\pi$ with strictly positive Lebesgue density exploiting the theory of paracontrolled distributions and a strict maximum principle for the singular Fokker-Planck equation. 
Furthermore, we prove a spectral gap on the semigroup of the diffusion and solve the Poisson equation with singular right-hand side equal to the drift itself. 
In the CLT scaling, we prove that the diffusion converges in law to a Brownian motion with constant diffusion matrix. In the pure $\alpha$-stable noise case, we rescale in the scaling that the $\alpha$-stable process respects and show convergence to the stable process itself. We conclude on the periodic homogenization result for the parabolic PDE for the singular generator  $\mathfrak{L}^{\epsilon}=-(-\Delta)^{\alpha/2}+\epsilon^{1-\alpha}F(\epsilon^{-1}\cdot)\cdot\nabla$ as $\epsilon\to 0$.\\
\textit{Keywords: Periodic homogenization, singular diffusion, stable Lévy noise, Poisson equation, singular Fokker-Planck equation, paracontrolled distributions}\\
\textit{MSC2020: 35A21, 35B27, 60H10, 60G51, 60L40, 60K37.}
\end{section}
\vspace{0,5cm}
\hrule
\vspace{1cm}
\begin{section}{Introduction}
\noindent Periodic homogenization decribes the limit procedure from microscopic boundary-value problems posed on periodic structures to a macroscopic equation. Such periodic media are for example composite materials or polymer structures. The theory originated from engineering purposes in material sciences in the 1970s, cf.~\cite{Bensoussan1978} and the references therein. Mathematically, this leads to the study of the limit of periodic operators with rapidly oscillating coefficients. There exist analytic and probabilistic methods to determine the limit equation. We refer to the classical works \cite{Bensoussan1978, Pavliotis2008} for the background on homogenization theory. We employ a probabilistic method using the Feynman-Kac formula (cf.~\cite{Oksendal2003}). Via the Feynman-Kac formula, the periodic homogenization result for the Kolmogorov PDE with fluctuating and unbounded drift corresponds to a central limit theorem for the diffusion process.\\
\noindent In this work, we generalize the theory of periodic homogenization for SDEs with additive Brownian noise, respectively stable Lévy noise, from \cite{Bensoussan1978}, respectively \cite{Franke2007}, from the setting of regular coefficients to singular Besov drifts $F\in(\calC^{\beta}(\mathbb{T}^{d}))^{d}$ for $\beta\in ((2-2\alpha)/3,0)$ on the $d$-dimensional torus $\mathbb{T}^{d}$.\\ 
In \cite[Section 3.4.2]{Bensoussan1978}, the periodic drift coefficient is assumed to be $C^{1}$ with Hölder-continuous derivative and the periodic diffusion coefficient is assumed to be symmetric and uniformly elliptic, as well as $C^{2}$ with Hölder-continuous first derivative and bounded second derivative. The assumption of uniform ellipticity can be relaxed to allow for some degeneracy, which was investigated in \cite{Hairer2008} using Malliavin calculus techniques.\\  
In \cite{Franke2007} the multiplicative symmetric $\alpha$-stable noise case for $\alpha\in (1,2)$ is studied and the coefficients are assumed to be even more regular, namely $C^{3}$. The regularity assumptions were relaxed in \cite{Huang2018, Huang2022}, where the authors more generally consider the periodic homogenization for the generator of an $\alpha$-stable-like Feller process. In \cite{Huang2022}, using a Zvonkin transformation to remove the drift (cf.~\cite{Zvonkin1974}), the authors can consider drifts that are bounded and $\beta$-Hölder continuous for $\beta\in (1-\alpha/2,1)$. They also consider a non-linear intensity function $\sigma$ and therefore a multiplicative noise term of the form $\sigma(X_{t},dL_{t}^{\alpha})$, see \cite[Equation (2.1)]{Huang2022} with an isotropic $\alpha$-stable process $L^{\alpha}$, whereas in \cite{Franke2007} the intensity function $\sigma(x,y)$ is linear in $y$.\\ In the recent article \cite{Chen2021} the authors further generalize the assumption on the drift coefficient to bounded, measurable drifts and consider the solution of the martingale problem associated to the SDE. The operator they consider is a Lévy-type operator that in particular includes all stable Lévy noise generators, symmetric and non-symmetric. 
They prove the homogenization result with the corrector method, an analytical method in homogenization theory, and show that different limit phenomena occur in the cases $\alpha\in (0,1)$, $\alpha=1$, $\alpha\in (1,2)$, $\alpha=2$ and $\alpha\in (2,\infty)$.\\
With analytical methods, the papers \cite{Krassmann, Arisawa, Schwab} deal with Lévy-type operators with oscillating coefficients for $\alpha\in (0,2)$, but without drift part.\\
In the mixed jump-diffusion case, \cite{Sandric} investigates the periodic homogenization for zero-drift diffusions with small jumps. The homogenized process in this case is also a Brownian motion.\\
We focus on the addive $\alpha$-stable symmetric noise case, where different limit behaviours occure for $\alpha=2$ (the Brownian noise case) and $\alpha\in (1,2)$. Our contribution is the generalization to distributional drifts, not only in the Young, but also in the rough regime.\\
For the homogenization result, we rely on Kipnis-Varadhan martingale methods (cf.~\cite{Kipnis1986} and \cite{klo}). Those methods require to solve the Poisson equation for the generator of the diffusion (or more generally the resolvent equations and imposing additional assumption) and to rewrite the additive functional in terms of that solution and Dynkin's martingale. Poisson equations for generators of diffusions with regular coefficients were studied in the classical article \cite{Pardoux1998}.\\ 
Following \cite{klo}, we generalize those techniques to much less regular drift coefficient. In particular this includes bounded measurable drifts or distributional drifts in the Young regime, where classical PDE techniques apply. More interestingly, our theory applies in the setting of singular drifts such as a typical realization of the periodic spatial white noise, cf. \cref{rem:p-Brox}. In order to apply the SDE solution theory from \cite{kp}, we restrict to additive noise.\\
To be more precise, we study the functional central limit theorem for the solution $X$ of the martingale problem associated to the SDE 
\begin{align}\label{eq:sde}
dX_{t}=F(X_{t})dt+dL_{t}
\end{align} with $F\in(\calC^{\beta}(\mathbb{T}^{d}))^{d}$ and a symmetric $\alpha$-stable process $L$ for $\alpha\in (1,2]$. The singular generator $\mathfrak{L}$ of $X$ is given by 
\begin{align*}
\mathfrak{L}=-\La+F\cdot\nabla.
\end{align*}
The first step is to prove existence and uniqueness of an invariant probability measure $\pi$ on $\mathbb{T}^{d}$ for $\mathfrak{L}$ with strictly positive Lebesgue density. We achieve this by solving the singular Fokker-Planck equation with singular initial condition $\mu\in \calC^{0}_{1}$,
\begin{align*}
(\partial_{t}-\mathfrak{L}^{\ast})\rho_{t}=0,\quad \rho_{0}=\mu,
\end{align*} with formal Lebesgue adjoint $\mathfrak{L}^{\ast}$ of $\mathfrak{L}$ and proving a strict maximum principle on compacts. Furthermore, we prove spectral gap estimates for the semigroup of the diffusion projected onto the torus and solve the singular resolvent equation for $\mathfrak{L}$. This enables, through a limiting argument in a Sobolev-type space $\mathcal{H}^{1}(\pi)$ with respect to $\pi$, to solve the Poisson equation \eqref{eq:Poisson-eq} with singular right-hand side $F-\langle F\rangle_{\pi}$. Here, we define $\langle F\rangle_{\pi}=\int F d\pi$ in a stable manner.\\
For the homogenization, we distinguish between the cases $\alpha=2$ (Brownian noise case) and $\alpha\in (1,2)$, as the scaling and the limit behaviour differs. 
In the standard Brownian noise case, we prove weak convergence
\begin{align}\label{eq:mainr1}
\paren[\bigg]{\frac{1}{\sqrt{n}}(X_{nt}-nt\langle F\rangle_{\pi})}_{t\in[0,T]}\Rightarrow (\sqrt{D}B_{t})_{t\in[0,T]},
\end{align}
where $B$ is a standard Brownian motion and $D$ is the constant diffusion matrix with entries
\begin{align*}
D(i,j):=\int_{\mathbb{T}^{d}} (e_{i}+\nabla \chi^{i}(x))(e_{j}+\nabla \chi^{j}(x))^{T}\pi(dx),
\end{align*} for $i,j=1,\dots,d$ and $e_{i}$ denoting the $i$-th euclidean unit vector. The limit is motivated by the result from \cite[Section 3.4.2]{Bensoussan1978}. 
Furthermore, $\chi\in (L^{2}(\pi))^{d}$ solves the Poisson equation with singular right-hand side $F-\langle F\rangle_{\pi}$: 
\begin{align}\label{eq:Poisson-eq}
(-\mathfrak{L})\chi^{i}=F^{i}-\langle F^{i}\rangle_{\pi},
\end{align} for $i=1,\dots,d$. 
In the pure Lévy noise case $\alpha\in (1,2)$ we rescale in the $\alpha$-stable scaling $n^{-1/\alpha}$ instead of $n^{-1/2}$. In this scaling we show, that the Dynkin martingale 
vanishes and thus we obtain weak convergence towards the stable process itself,
\begin{align}\label{eq:mainr2}
\paren[\bigg]{\frac{1}{n^{1/\alpha}}(X_{nt}-nt\langle F\rangle_{\pi})}_{t\in [0,T]}\Rightarrow (L_{t})_{t\in[0,T]}.
\end{align} 
In particular, compared to the Brownian noise case, there is no diffusivity enhancement in the limit (analogously to the regular coefficient case, cf.~\cite{Franke2007}).

\noindent The paper is structured as follows. Preliminaries and the strategy to prove the central limit theorem are outlined in \cref{sec:prelim-PH}. In \cref{sec:s-FP} we solve the singular Fokker-Planck equation with the paracontrolled approach. 
The singular resolvent equation for $\mathfrak{L}$ is solved in \cref{sec:res-eq}. We show in \cref{sec:inv-m} existence and uniqueness of the invariant measure $\pi$. 
\cref{sec:inv-m} furthermore yields a characterization of the domain of the generator $\mathfrak{L}$ in $L^{2}(\pi)$, cf.~\cref{thm:generator}. In \cref{sec:s-P-eq}, we solve the Poisson equation with singular right-hand side $F-\langle F\rangle_{\pi}$. Finally, we prove the CLT in \cref{sec:CLT} and relate to the periodic homogenization result for the parabolic PDE with oscillating operator $\mathfrak{L}^{\epsilon}=-\La+\epsilon^{1-\alpha}F(\epsilon^{-1}\cdot)\cdot\nabla$, cf.~\cref{cor:PDEhomog}. 
\end{section}

\begin{section}{Preliminaries}\label{sec:prelim-PH}
This section gives an introduction to periodic Besov spaces and Schauder and exponential Schauder estimates on such. Furthermore, we introduce the projected solution $X^{\mathbb{T}^{d}}$ of $X$ onto the torus and its generator $\mathfrak{L}$ and semigroup. We define the space of enhanced distributions $\mathcal{X}^{\beta,\gamma}_{\infty}$. This section finishes with a summary on our strategy in proving the convergence results \eqref{eq:mainr1} and \eqref{eq:mainr2}.\\

\noindent Let $\mathcal{S}(\R^{d})$ be the space of Schwartz functions and $\mathcal{S}'(\R^{d})$ the space of tempered distributions. A periodic (or $1$-periodic) distribution $u$ satisfies $u(\varphi(\cdot+1))=u(\varphi)$ for all $\varphi\in\mathcal{S}(\R^{d})$. Let $\mathbb{T}^{d}=(\R/\mathbb{Z})^{d}$ denote the torus and $\mathcal{S}(\mathbb{T}^{d})$ the space of Schwartz functions on the torus, i.e. smooth functions with the locally convex topology generated by the family of semi-norms $\norm{f}_{\gamma}=\sup_{x\in\mathbb{T}^{d}}\abs{D^{\gamma}f(x)}$ for multi-indices $\gamma\in\N^{d}$, and its topological dual $\mathcal{S}'(\mathbb{T}^{d})$.
Let $(p_{j})_{j\geqslant -1}$ be a smooth dyadic partition of unity, i.e. a family of functions $p_{j}\in C^{\infty}_{c}(\R^{d})$ for $j\geqslant -1$, such that 
\begin{enumerate}
\item[1.)]$p_{-1}$ and $p_{0}$ are non-negative radial functions (they just depend on the absolute value of $x\in\R^{d}$), such that the support of $p_{-1}$ is contained in a ball and the support of $p_{0}$ is contained in an annulus;
\item[2.)]$p_{j}(x):=p_{0}(2^{-j}x)$, $x\in\R^{d}$, $j\geqslant 0$;
\item[3.)]$\sum_{j=-1}^{\infty}p_{j}(x)=1$ for every $x\in\R^{d}$; and
\item[4.)]$\operatorname{supp}(p_{i})\cap \operatorname{supp}(p_{j})=\emptyset$ for all $\abs{i-j}>1$.
\end{enumerate} 
Then, we define the Besov space on the torus with regularity $\theta\in\R$, integrability $p\in[1,\infty]$ and summability $q\in[1,\infty)$ as (cf.~\cite[Section 3.5]{Schmeisser1987}) 
\begin{align}
B^{\theta}_{p,q}(\mathbb{T}^{d}):=\{u\in\mathcal{S}'(\mathbb{T}^{d})\mid \norm{u}_{B^{\theta}_{p,q}}:=\norm{(2^{js}\norm{\Delta_{j}u}_{L^{p}(\mathbb{T}^{d})})_{j\geqslant -1}}_{l^{q}(\mathbb{Z}^{d})}<\infty\}
\end{align}
for the Littlewood-Paley blocks $\Delta_{j}u=\F^{-1}_{\mathbb{T}^{d}}(\rho_{j}\F_{\mathbb{T}^{d}}u)$ with Fourier transform $\F_{\mathbb{T}^{d}}f(k)=\hat{f}(k)=\int_{\mathbb{T}^{d}}f(x)e^{-2\pi i k \cdot x}dx$, $k\in\mathbb{Z}^{d}$, with inverse Fourier transform $\F^{-1}_{\mathbb{T}^{d}}f(x)=\sum_{k\in\mathbb{Z}^{d}}f(k)e^{2\pi i k\cdot x}$ and a dyadic partition of unity $(\rho_{j})_{j\geqslant -1}$ as above (cf. also~\cite[Section 3.4.4]{Schmeisser1987}). The Fourier transform on $\R^{d}$, we define as $\F f(y)= \int_{\mathbb{T}^{d}}f(x)e^{-2\pi i y \cdot x}dx$, $y\in\R^{d}$ with inverse $\F^{-1}f(y)=\F f(-y)$, $f\in\mathcal{S}(\R^{d})$. In the case $q=\infty$, we rather work with the separable Besov space, and thus define
\begin{align}\label{eq:Besov-torus-infty}
B^{\theta}_{p,\infty}=B^{\theta}_{p,\infty}(\mathbb{T}^{d}):=\{u\in\mathcal{S}'(\mathbb{T}^{d})\mid \norm{u}_{B^{\theta}_{p,\infty}}:=\lim_{j\to\infty} 2^{js}\norm{\Delta_{j}u}_{L^{p}}=0\}.
\end{align} 
We introduce the notation $\calC^{\theta}_{p}(\mathbb{T}^{d}):=B^{\theta}_{p,\infty}(\mathbb{T}^{d})$ for $p\in[1,\infty)$ and $\calC^{\theta}(\mathbb{T}^{d}):=B^{\theta}_{\infty,\infty}(\mathbb{T}^{d})$ and analogously for $\mathbb{T}^{d}$ replaced by $\R^{d}$. We simply write $B^{\theta}_{p,q}$, respectively $\calC^{\theta}_{p}$, in the case the statement holds for any of the spaces, on the torus $\mathbb{T}^{d}$ and on $\R^{d}$.
We recall from Bony's paraproduct theory (cf. \cite[Section 2]{Bahouri2011}) that in general for $u\in\calC^{\theta}$ and $v\in\calC^{\beta}$ with $\theta,\beta\in\R$, the product $u v:=u\para v+u\arap v +u \reso v$ , is  well defined in $\calC^{\min(\theta,\beta,\theta+\beta)}$ if and only if $\theta+\beta>0$. Denoting $S_{i}u=\sum_{j=-1}^{i-1}\Delta_{j}u$, the paraproducts are defined as follows
\begin{align*}
u\para v:=\sum_{i\geqslant -1} S_{i-1}u\Delta_{i}v,\quad u\arap v:=v\para u, \quad u\reso v:= \sum_{\abs{i-j}\leqslant 1}\Delta_{i}u\Delta_{j}v.
\end{align*} Here, we use the notation of \cite{Martin2017, Mourrat2017Dynamic} for the para- and resonant products $\para, \arap$ and  $\reso$.\\
In estimates we often use the notation $a\lesssim b$, which means, that there exists a constant $C>0$, such that $a\leqslant C b$. In the case that we want to stress the dependence of the constant $C(d)$ in the estimate on a parameter $d$, we write $a\lesssim_{d} b$.\\
Let $C^{\infty}_{b}=C^{\infty}_{b}(\R^{d},\R)$ denote the space of smooth, bounded functions with bounded partial derivatives.\\
The paraproducts satisfy the following estimates for $p,p_{1},p_{2}\in[1,\infty]$ with $\frac{1}{p}=\min(1,\frac{1}{p_{1}}+\frac{1}{p_{2}})$ and $\theta,\beta\in\R$ (cf. \cite[Theorem A.1]{PvZ} and \cite[Theorem 2.82, Theorem 2.85]{Bahouri2011})
\begin{equation}
\begin{aligned}\label{eq:paraproduct-estimates}
\norm{u\reso v}_{\calC^{\theta+\beta}_{p}} & \lesssim\norm{u}_{\calC^{\theta}_{p_{1}}}\norm{v}_{\calC^{\beta}_{p_{2}}}, \qquad \text{if }\theta +\beta > 0,\\
\norm{u\para v}_{\calC^{\beta}_{p}} \lesssim\norm{u}_{L^{p_{1}}}\norm{v}_{\calC^{\beta}_{p_{2}}}& \lesssim\norm{u}_{\calC^{\theta}_{p_{1}}}\norm{v}_{\calC^{\beta}_{p_{2}}}, \qquad \text{if } \theta > 0,\\
\norm{u\para v}_{\calC^{\beta+\theta}_{p}}& \lesssim\norm{u}_{\calC^{\theta}_{p_{1}}}\norm{v}_{\calC^{\beta}_{p_{2}}}, \qquad \text{if } \theta < 0.
\end{aligned}
\end{equation}
So if $\theta + \beta > 0$, we have $\norm{u v}_{\calC^{\gamma}_{p}}\lesssim\norm{u}_{\calC^{\theta}_{p_{1}}}\norm{v}_{\calC^{\beta}_{p_{2}}}$ for $\gamma:=\min(\theta,\beta,\theta+\beta)$.\\

\noindent Next, we collect some facts about $\alpha$-stable Lévy processes and their generators and semigroups. For $\alpha\in (0,2]$, a symmetric $\alpha$-stable Lévy process $L$ is a Lévy process, that moreover satisfies the scaling property $(L_{k t})_{t \geqslant 0}\stackrel{d}{=}k^{1/\alpha}(L_{t})_{t \geqslant 0}$ for any $k>0$ and $L\stackrel{d}{=}-L$, where $\stackrel{d}{=}$ denotes equality in law. These properties determine the jump measure $\mu$ of $L$, see \cite[Theorem 14.3]{Sato1999}.  That is, if $\alpha\in (0,2)$, the Lévy jump measure $\mu$ of $L$ is given by
\begin{align}\label{eq:mu}
\mu(A):=\E\bigg[\sum_{0\leqslant t\leqslant 1}\mathbf{1}_{A}(\Delta L_{t})\bigg]=\int_{S}\int_{\R^{+}}\mathbf{1}_{A}(k\xi)\frac{1}{k^{1+\alpha}}dk\tilde \nu(d\xi),\quad A\in \mathcal{B}(\R^{d}\setminus\{0\}),
\end{align} where $\tilde \nu$ is a finite, symmetric, non-zero measure on the unit sphere $S\subset\R^{d}$. 
Furthermore, we also define for $A\in\mathcal{B}(\R^{d}\setminus\{0\})$ and $t\geqslant 0$ the Poisson random measure
\begin{align*}
\pi(A\times [0,t])=\sum_{0\leqslant s\leqslant t}\mathbf{1}_{A}(\Delta L_{s}),
\end{align*} with intensity measure $dt\mu(dy)$. Denote the compensated Poisson random measure of $L$ by $\hat{\pi}(dr,dy):=\pi(dr,dy)-dr\mu(dy)$.
We refer to the book by Peszat and Zabczyk \cite{peszat_zabczyk_2007} for the integration theory against Poisson random measures. 
The generator $A$ of $L$ satisfies $C_{b}^{\infty}(\R^{d})\subset \operatorname{dom}(A)$ and is given by
\begin{align}\label{eq:functional}
A\varphi(x)=\int_{\R^{d}}\paren[\big]{\varphi(x+y)-\varphi(x)-\mathbf{1}_{\{\abs{y}\leqslant 1\}}(y) \nabla \varphi(x) \cdot y}\mu(dy)\qquad\text{for }\varphi\in C_{b}^{\infty}(\R^{d}).
\end{align} If $(P_t)_{t\geqslant 0}$ denotes the semigroup of $L$, the convergence $t^{-1}(P_{t}f(x)-f(x))\to Af(x)$ is uniform in $x\in\R^{d}$ (see \cite[Theorem 5.4]{peszat_zabczyk_2007}).

\noindent We also have a Fourier respresentation of the operator $A$, that is defined as follows.

\begin{definition}\label{def:fl}
Let $\alpha \in (0,2)$ and let $\nu$ be a symmetric (i.e. $\nu(A)=\nu(-A)$), finite and non-zero measure on the unit sphere $S\subset\R^{d}$. We define the operator $\La$ as
\begin{align}
\La\F^{-1}\varphi=\F^{-1}(\psi^{\alpha}_{\nu} \varphi)\qquad\text{for $\varphi\in C^\infty_b$,}
\end{align}  where
$\psi^{\alpha}_{\nu} (z):=\int_{S}\abs{\langle z,\xi\rangle}^{\alpha}\nu(d\xi).$
For $\alpha=2$, we set $\La:=-\frac{1}{2}\Delta$.\\
On the torus and for $\alpha\in (1,2)$, we define the fractional Laplacian as follows: for $f\in C^{\infty}(\mathbb{T}^{d})$ 
\begin{align*}
\La f=\F^{-1}_{\mathbb{T}^{d}}(\mathbb{Z}^{d}\ni k\mapsto\psi^{\alpha}_{\nu}(k)\hat{f}(k))
\end{align*} and for $\alpha=2$ analogously.
\end{definition}

\begin{remark}
If we take $\nu$ as a suitable multiple of the Lebesgue measure on the sphere, then $\psi^\alpha_\nu(z) = |2\pi z|^\alpha$ and thus $\La$ is the fractional Laplace operator $(-\Delta)^{\alpha/2}$. 
\end{remark}

\begin{lemma}\label{rem:lgen}\footnote{\cite[Lemma 2.3]{kp}}
Let $\alpha \in (0,2)$ and let again $\nu$ be a symmetric, finite and non-zero measure on the unit sphere $S\subset\R^{d}$. Then for $\varphi \in C^\infty_b$ we have $-\La \varphi = A\varphi$, where $A$ is the generator of the symmetric, $\alpha$-stable Lévy process $L$ with characteristic exponent $\E[\exp(2\pi i\langle z,L_{t}\rangle )]=\exp(-t\psi^{\alpha}_{\nu}(z))$. The process $L$ has the jump measure $\mu$ as defined in~\cref{eq:mu}, with $\tilde \nu = C \nu$ for some constant $C>0$.
\end{lemma}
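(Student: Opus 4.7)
The plan is to recognise the operator $A$ in \cref{eq:functional} as the Lévy-Khintchine generator of a pure-jump, symmetric, driftless Lévy process and then to match its Fourier symbol with that of $-\La$. The first step is to check that \cref{eq:functional} is well-defined for $\varphi\in C_{b}^{\infty}$: on $\{|y|\leqslant 1\}$ the Taylor estimate $|\varphi(x+y)-\varphi(x)-\nabla\varphi(x)\cdot y|\lesssim \|\nabla^{2}\varphi\|_{\infty}|y|^{2}$ pairs with $\int_{|y|\leqslant 1}|y|^{2}\mu(dy)\lesssim \int_{0}^{1}k^{1-\alpha}\,dk<\infty$ (valid since $\alpha<2$), and on $\{|y|>1\}$ boundedness of $\varphi$ pairs with finiteness of $\mu$ outside the unit ball; these bounds also make the Fubini exchange below rigorous.

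Next, I would compare Fourier symbols. For Schwartz $\phi$, \cref{def:fl} gives $\widehat{-\La\phi}(z)=-\psi^{\alpha}_{\nu}(z)\hat\phi(z)$, and a direct computation using translation and Fubini (justified by the bounds above) yields $\widehat{A\phi}(z)=\psi(z)\hat\phi(z)$ with
\begin{align*}
\psi(z):=\int_{\R^{d}}\paren[\big]{e^{2\pi iz\cdot y}-1-\mathbf{1}_{\{|y|\leqslant 1\}}(y)\, 2\pi iz\cdot y}\mu(dy).
\end{align*}
Thus the identity $-\La\varphi=A\varphi$ on $\mathcal{S}$ (and hence on $C_{b}^{\infty}$ by a density argument applied to the bounded continuous functions $-\La\varphi$ and $A\varphi$) reduces to showing $\psi=-\psi^{\alpha}_{\nu}$ after a suitable normalisation $\tilde\nu=C\nu$.

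To evaluate $\psi$, I would insert the polar decomposition \cref{eq:mu} and exploit the symmetry of $\tilde\nu$ on $S$: every odd-in-$\xi$ contribution vanishes (both the imaginary part of the exponential and the linear compensator), leaving
\begin{align*}
\psi(z)=\int_{S}\int_{0}^{\infty}\frac{\cos(2\pi k z\cdot\xi)-1}{k^{1+\alpha}}\,dk\, \tilde\nu(d\xi).
\end{align*}
The substitution $u=2\pi k|z\cdot\xi|$ together with the evenness of cosine gives $\int_{0}^{\infty}(\cos(2\pi k z\cdot\xi)-1)k^{-1-\alpha}\,dk=-c_{\alpha}(2\pi)^{\alpha}|z\cdot\xi|^{\alpha}$, with $c_{\alpha}:=-\int_{0}^{\infty}(\cos u-1)u^{-1-\alpha}\,du\in(0,\infty)$ for $\alpha\in(0,2)$ (finiteness follows from $\cos u-1=O(u^{2})$ at $0$ and $O(1)$ at $\infty$). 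Hence $\psi(z)=-c_{\alpha}(2\pi)^{\alpha}\int_{S}|\langle z,\xi\rangle|^{\alpha}\tilde\nu(d\xi)$, and the choice $C:=1/(c_{\alpha}(2\pi)^{\alpha})>0$, $\tilde\nu:=C\nu$, delivers $\psi=-\psi^{\alpha}_{\nu}$. The identification of $\psi$ as the characteristic exponent of a symmetric $\alpha$-stable process $L$, i.e.\ $\E[\exp(2\pi i\langle z,L_{t}\rangle)]=\exp(-t\psi^{\alpha}_{\nu}(z))$, is then the Lévy-Khintchine theorem for the triplet $(0,0,\mu)$, and uniqueness of that triplet identifies the jump measure of $L$ with the $\mu$ of \cref{eq:mu}. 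The one mildly delicate step is the Fubini/Fourier exchange at $C_{b}^{\infty}$ regularity, where $\hat\varphi$ is only a tempered distribution; I would handle it by pairing $A\varphi$ against Schwartz test functions and invoking the $L^{1}(\mu)$ bounds from the first step, after which everything reduces to the explicit radial calculation above.
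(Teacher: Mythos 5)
The paper does not give its own proof of this lemma; the footnote simply cites \cite[Lemma~2.3]{kp}, whose argument is precisely the one you reconstruct. Your proposal is correct: the integrability split at $|y|=1$, the Fourier symbol computation, the use of symmetry of $\tilde\nu$ to discard the compensator and the $\sin$ part, the change of variable $u=2\pi k|z\cdot\xi|$ yielding $-c_\alpha(2\pi)^\alpha|z\cdot\xi|^\alpha$ with $c_\alpha=\int_0^\infty(1-\cos u)u^{-1-\alpha}\,du\in(0,\infty)$, and the identification $C=1/(c_\alpha(2\pi)^\alpha)$ are all exactly the standard computation. Two small remarks that would tidy the write-up: (i) when you remove the compensator by symmetry of $\tilde\nu$, for $\alpha\geqslant 1$ the term $\int_0^1\int_S 2\pi i\,k\,z\cdot\xi\,\tilde\nu(d\xi)\,k^{-1-\alpha}\,dk$ is not absolutely convergent, so the cancellation should be performed at the level of the truncated $k$-integrals $\int_\varepsilon^\infty$ and then $\varepsilon\downarrow 0$; (ii) the passage from Schwartz $\phi$ to $C_b^\infty$ is most cleanly handled by observing that $A\varphi$ is a bounded continuous function and pairing against Schwartz test functions (as you indicate), or simply by noting that homogeneity of $\psi^\alpha_\nu$ and uniqueness of the L\'evy triplet identify $L$ and $\mu$, after which $A=-\La$ is just the identification of the generator of $L$ with the Fourier multiplier $-\psi^\alpha_\nu$ on $C_b^\infty$.
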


\noindent If $\alpha=2$, then the generator of the symmetric, $\alpha$-stable process coincides with $\sum_{i,j}C(i,j)\partial_{x_{i}}\partial_{x_{j}}$ for an explicit covariance matrix $C$ (cf.~\cite[Theorem 14.2]{Sato1999}), that is, the generator of $\sqrt{2C}B$ for a standard Brownian motion $B$. To ease notation, we consider here $C=\frac{1}{2}\operatorname{Id}_{d\times d}$ and whenever we refer to the case $\alpha=2$, we mean the standard Brownian motion noise case and we set $\La:=-\frac{1}{2}\Delta$.

\begin{assumption}\label{ass}
Throughout the work, we assume that the measure $\nu$ from \cref{def:fl} has $d$-dimensional support, in the sense that the linear span of its support is $\R^d$. This means that the process $L$ can reach every open set in $\R^d$ with positive probability.
\end{assumption}

\noindent An $\alpha$-stable, symmetric Lévy process, that satisfies \cref{ass}, we also call non-degenerate.\\

\noindent In the following, we will not distinguish between $F\in(\calC^{\beta}(\mathbb{T}^{d}))^d$ and the periodic version on $\R^{d}$, $F^{\R^{d}}\in (\calC^{\beta})^d$, whenever there is no danger of confusion. 
We understand \eqref{eq:sde} as a singular SDE with periodic coefficient $F^{\R^{d}}$ and in particular existence of a solution to the martingale problem follows from \cite{kp}. For that, we need to assume that the drift $F$ can be enhanced in the following sense.
Let for $\gamma\in (0,1)$,
\begin{align*}
\mathcal{M}_{\infty,0}^{\gamma}X=\{u:(0,\infty)\to X\mid \exists C>0, \forall t>0, \norm{u_{t}}_{X}\leqslant C [t^{-\gamma}\vee 1]\}.
\end{align*}  
\begin{assumption}
For $\beta\in (\frac{2-2\alpha}{3},\frac{1-\alpha}{2}]$ we assume that $(F_{1}=F,F_{2})\in\mathcal{X}^{\beta,\gamma}_{\infty}(\mathbb{T}^{d})$, that is $(F^{\R^{d}},F_{2}^{\R^{d}})\in\mathcal{X}^{\beta,\gamma}_{\infty}$, where 
\begin{align}\label{def:enhanced-dist-ph}
\mathcal{X}^{\beta,\gamma}_{\infty}:=cl(\{\paren[\big]{\eta,(P_{t}(\partial_{i}\eta^{j})\reso\eta^{k})_{i,j,k\in\{1,\dots,d\}}}\mid \eta\in C^{\infty}_{b}(\R^{d},\R^{d})\})
\end{align} for $\gamma\in [(2\beta+2\alpha-1)/\alpha,1)$  and
for the closure in $\calC^{\beta+(1-\gamma)\alpha}\times \mathcal{M}_{\infty,0}^{\gamma}\calC^{2\beta+\alpha-1}$. For $\beta\in (\frac{1-\alpha}{2},0)$, we assume that $F\in\calC^{\beta+(1-\gamma)\alpha}$ for $\gamma\in ((\beta-1)/\alpha,0)$ and set $\calX^{\beta,\gamma}_{\infty}:=\calC^{\beta+(1-\gamma)\alpha}$.
\end{assumption}

\begin{remark}
The assumption on the enhanced distribution in \eqref{def:enhanced-dist-ph} is stronger than the assumption in \cite[Definition 4.2]{kp-sk} in the sense that $F$ is an enhanced distribution for any finite time horizon $T>0$, instead of for a fixed time horizon. This assumption will be needed in \cref{sec:res-eq} to solve the resolvent equation. Notice also that the blow-up $\gamma$ occures at the initial time $t=0$ and not at a terminal time and that $F$ does not depend on a time variable here. Furthermore, in the definition above we allow for three different indices $i,j,k$ in \eqref{def:enhanced-dist-ph}. This assumption is due to the fact that we also solve the adjoint equation, i.e. the Fokker-Planck equation. For the Fokker-Planck equation, we will encounter the products $P_{t}(\partial_{i}F^{i})\reso F^{j}$ for $i,j=1,\dots,d$, whereas for the Kolmogorov equation, we have $P_{t}(\partial_{i}F^{j})\reso F^{i}$ for $i,j$. To cover both products, we assume \eqref{def:enhanced-dist-ph}. The blow-up $\gamma$ can be thought of as close to $1$ and $t\mapsto P_{t}(\partial_{i}F^{j})\reso F^{k}\in\mathcal{M}^{\gamma}_{\infty,0}\calC^{2\beta+\alpha-1}$ in particular implies that for any $T>0$, $\int_{0}^{T}P_{t}(\partial_{i}F^{j})\reso F^{k}dt\in\calC^{2\beta+\alpha-1}$.
\end{remark}
\noindent For completeness we state the definition of a solution to the singular martingale problem from \cite[Definition 4.1]{kp}, cf. also \cite{Cannizzaro2018}, and \cite[Theorem 4.2]{kp} about the existence and uniqueness of martingale solutions.

\begin{definition}[Martingale problem]\label{def:martp}
Let $\alpha\in (1,2]$ and $\beta\in(\frac{2-2\alpha}{3},0)$, and let $T>0$ and $F^{\R^{d}}\in \mathcal{X}^{\beta,\gamma}_{\infty}$. Then, we call a probability measure $\p$ on the Skorokhod space $(\Omega,\mathcal{F})$ a solution of the martingale problem for $(\mathcal{G}^{V},\delta_x)$, if
\begin{enumerate}
\item[\textbf{1.)}] $\p(X_{0}\equiv x)=1$ (i.e. $\p^{X_{0}}=\delta_{x}$), and
\item[\textbf{2.)}] for all $f\in C_{T}\calC^{\epsilon}$ with $\varepsilon > 2-\alpha$ and for all $u^{T}\in\mathcal{C}^{3}$, the process $M=(M_{t})_{t\in [0,T]}$ is a martingale under $\p$ with respect to $(\mathcal{F}_{t})$, where
\begin{align}
M_{t}=u(t,X_{t})-u(0,x)-\int_{0}^{t}f(s,X_{s})ds
\end{align} and where $u$ is a mild solution of the Kolmogorov backward equation $\mathcal{G}^{F}u=f$ with terminal condition $u(T,\cdot)=u^{T}$, where $\mathcal{G}^{F}:=\partial_{t}-\La+F\cdot\nabla$.
\end{enumerate} 
\end{definition}
\begin{remark}
Although we consider a drift term $F$ that does not depend on a time variable, we consider the parabolic Kolmogorov PDE in the definition above. Equivalently one could reformulate the martingale problem with the resolvent equation for the operator $-\La+F\cdot\nabla$ instead. We use the above definition to be able to apply the result from \cite{kp}.
\end{remark}

\begin{theorem}\label{thm:mainthm1}
Let $\alpha\in (1,2]$ and $L$ be a symmetric, $\alpha$-stable L\'evy process, such that the measure $\nu$ satisfies \cref{ass}. Let $T>0$ and $\beta\in ((2-2\alpha)/3,0)$ and let $F^{\R^{d}}\in\calX^{\beta,\gamma}_{\infty}$. Then for all $x\in\R^{d}$, there exists a unique solution $\mathbb{Q}$ on $(\Omega,\mathcal F)$ of the martingale problem for $(\mathcal{G}^{V},\delta_x)$. Under $\mathbb{Q}$ the canonical process is a strong Markov process.
\end{theorem}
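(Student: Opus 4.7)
The plan is to reduce the statement directly to \cite[Theorem 4.2]{kp}, whose hypotheses are tailored to exactly the setting we are in. The main thing to check is that our enhancement assumption $F^{\R^{d}} \in \calX^{\beta,\gamma}_{\infty}$ is at least as strong as the one required there. In fact, on any finite horizon $[0,T]$, a control in $\mathcal{M}^{\gamma}_{\infty,0}\calC^{2\beta+\alpha-1}$ with blow-up at $t=0$ and all index permutations $(i,j,k)$ restricts in particular to the subset of products $(P_t(\partial_i F^j)\reso F^i)_{i,j}$ with the localized blow-up used in \cite{kp}, so the input is admissible.

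Once this is in place, the proof is a four-step application of the machinery from \cite{kp}. First, invoke the paracontrolled well-posedness of the backward Kolmogorov equation $\mathcal{G}^F u = f$ with terminal datum $u^T \in \mathcal{C}^3$ and source $f \in C_T\calC^\varepsilon$, $\varepsilon > 2-\alpha$: this yields a mild solution $u \in C_T\calC^{\alpha+\varepsilon'}$ for some $\varepsilon' > 0$ sufficient to apply It\^o's formula against $X$ in the mollified equation. Second, prove existence by smoothing: take $F_n \in C^\infty_b$ with $F_n \to F$ in the enhanced topology of $\calX^{\beta,\gamma}_{\infty}$, solve the classical SDE \eqref{eq:sde} with drift $F_n$ (call the law $\mathbb{Q}_n$), and use continuity of the solution map of the Kolmogorov PDE in the enhanced data to identify any weak limit as a martingale solution in the sense of \cref{def:martp}; tightness of $(\mathbb{Q}_n)$ on the Skorokhod space follows from a standard moment/Aldous argument once one has uniform control of $u_n$, the paracontrolled PDE solution for $F_n$.

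Third, prove uniqueness by a Stroock–Varadhan argument: if $\mathbb{Q}_1,\mathbb{Q}_2$ are two solutions started at $\delta_x$, then for any $f \in C_T\calC^\varepsilon$ and terminal datum $u^T$ the identity
\begin{equation*}
\E_{\mathbb{Q}_i}\brackets*{u(T,X_T)} = u(0,x) + \E_{\mathbb{Q}_i}\brackets*{\int_0^T f(s,X_s)\,ds}
\end{equation*}
holds with the same paracontrolled solution $u$ for $i=1,2$. Choosing $f=0$ and varying $u^T$ in $\mathcal{C}^3$, this pins down the one-dimensional marginals, and choosing $f$ and $u^T$ appropriately on successive intervals iterates to fix all finite-dimensional distributions, giving $\mathbb{Q}_1 = \mathbb{Q}_2$. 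Fourth, the strong Markov property follows from the uniqueness statement valid at every initial point $x\in\R^d$ via the standard Ethier–Kurtz argument: the uniqueness combined with measurability of $x \mapsto \mathbb{Q}_x$ forces the regular conditional distribution of $(X_{\tau+t})_{t\geqslant 0}$ given $\mathcal{F}_\tau$ to coincide $\mathbb{Q}$-a.s.\ with $\mathbb{Q}_{X_\tau}$ at every stopping time $\tau$.

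The main conceptual obstacle, namely the paracontrolled analysis of the singular Kolmogorov backward equation with distributional drift $F \in \calC^\beta$ in the rough regime $\beta > (2-2\alpha)/3$, is done in \cite{kp}; in our setting it only needs to be invoked, since we have strengthened rather than weakened the drift enhancement. The only bookkeeping obstacle is to verify that our time-uniform enhancement with blow-up at $t=0$ (rather than a fixed terminal time) indeed embeds into the enhancement used in \cite{kp} on every finite horizon $T$, which is immediate from the definition \eqref{def:enhanced-dist-ph} and the semigroup bound $\norm{P_t(\partial_i F^j)}_{\calC^{\beta-1+(1-\gamma)\alpha}} \lesssim t^{-\gamma}$ for small $t$.
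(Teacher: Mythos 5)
Your proposal is correct and essentially mirrors the paper, which gives no independent proof of \cref{thm:mainthm1} but simply quotes \cite[Theorem~4.2]{kp}; the compatibility of the time-uniform enhancement $\calX^{\beta,\gamma}_{\infty}$ with the finite-horizon one used in \cite{kp} (including that the resonant products with all index triples $(i,j,k)$ subsume the subset needed for the backward Kolmogorov equation) is exactly what \cref{rem:p-Brox}\footnote{intended: the remark following \eqref{def:enhanced-dist-ph}} already records. Your four-step sketch of the internal machinery of \cite{kp} (paracontrolled well-posedness of the backward equation, existence by mollification and tightness, uniqueness via Stroock--Varadhan duality, strong Markov via the Ethier--Kurtz measurable-selection argument) is a plausible and standard account, but is extra detail beyond what the present paper does.
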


\noindent In the following, we will also consider the projected process $(X^{\mathbb{T}^{d}}_{t})=(\iota(X_{t}))$ for the canonical projection $\iota:\R^{d}\to\mathbb{T}^{d}$, $x\mapsto [x]=x\mod\mathbb{Z}^{d}$, and the martingale solution $X$ from \cref{thm:mainthm1}. The generator $\mathfrak{L}$ of $X^{\mathbb{T}^{d}}$ we define by
\begin{align*}
\mathfrak{L}f:=-\La f+F\cdot\nabla f
\end{align*} 
acting on functions $f:\mathbb{T}^{d}\to\R$.\\
This work moreover yields a characterization of the domain $\text{dom}(\mathfrak{L})$ of the generator $\mathfrak{L}$, cf.~\cref{thm:generator}. We denote its semigroup by $(T_{t}^{\mathbb{T}^{d}})_{t\geqslant 0}$ with $T_{t}^{\mathbb{T}^{d}}f:=T_{t}f^{\R^{d}}$, $f\in L^{\infty}(\mathbb{T}^{d})$, with the semigroup $(T_{t})_{t\geqslant 0}$ of the Markov process $(X_{t})$ on $\R^{d}$ with periodic drift $F^{\R^{d}}$.\\  
The semigroup $(P_{t}^{\mathbb{T}^{d}})$ of the generalized fractional Laplacian $(-\La)$ acting on functions on the torus, is analogously defined  as $P_{t}^{\mathbb{T}^{d}}f:=P_{t}f^{\R^{d}}$ and the semigroup estimates for $(P_{t})$ imply the estimates for $(P_{t}^{\mathbb{T}^{d}})$ on the periodic Besov spaces $\calC^{\theta}(\mathbb{T}^{d})=\calC^{\theta}_{\infty}(\mathbb{T}^{d})$ (due to $u\in L^{\infty}(\mathbb{T}^{d})$ implying $u^{\R^{d}}\in L^{\infty}(\R^{d})$ and vice versa). 
The following lemma states the semigroup estimates for $(P^{\mathbb{T}^{d}}_{t})$ on $\calC^{\theta}_{2}(\mathbb{T}^{d})$, that will be employed in the sequel. The proof can be found in \cref{Appendix A}. 
\cref{lem:periodic-semi-est} in particular proves the extension of $\La$ to Besov spaces $\calC^{\beta}_{2}(\mathbb{T}^{d})$.
\begin{lemma}\label{lem:periodic-semi-est}
Let $u\in\calC^{\beta}_{2}(\mathbb{T}^{d})$ for $\beta\in\R$. Then the following estimates hold true
\begin{align}\label{eq:p-la}
\norm{\La u}_{\calC^{\beta-\alpha}_{2}(\mathbb{T}^{d})}\lesssim\norm{u}_{\calC^{\beta}_{2}(\mathbb{T}^{d})}.
\end{align} 
Moeover, for any $\theta\geqslant 0$ and $\vartheta\in [0,\alpha]$,
\begin{align}\label{eq:p-semi}
\norm{P_{t}u}_{\calC^{\beta+\theta}_{2}(\mathbb{T}^{d})}\lesssim (t^{-\theta/\alpha}\vee 1)\norm{u}_{\calC^{\beta}_{2}(\mathbb{T}^{d})}, \quad \norm{(P_{t}-\operatorname{Id})u}_{\calC^{\beta-\vartheta}_{2}(\mathbb{T}^{d})}\lesssim t^{\vartheta/\alpha}\norm{u}_{\calC^{\beta}_{2}(\mathbb{T}^{d})}.
\end{align}
\end{lemma}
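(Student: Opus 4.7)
The plan is to prove all three estimates at the level of Littlewood--Paley blocks using Plancherel, exploiting that on the torus both $\La$ and $P_t$ act as pointwise Fourier multipliers on $\mathbb{Z}^d$ with symbols $\psi^{\alpha}_{\nu}(k)$ and $e^{-t\psi^{\alpha}_{\nu}(k)}$, respectively. Since the Besov norms $\|\cdot\|_{\calC^{\theta}_2(\mathbb{T}^d)}$ are defined through $L^2$-norms of the blocks $\Delta_j u$, Plancherel's identity on $\mathbb{Z}^d$ converts each block estimate into a pointwise estimate on the symbol restricted to the annulus $\operatorname{supp}\rho_j$.

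The first preparatory step is to establish the comparability
\[
c\,|k|^{\alpha}\;\leqslant\;\psi^{\alpha}_{\nu}(k)\;\leqslant\;C\,|k|^{\alpha}
\qquad \text{for all } k\in\R^d,
\]
with constants depending only on $\nu$ and $d$. The upper bound is immediate from the definition and the finiteness of $\nu$, while the lower bound follows from $\alpha$-homogeneity of $\psi^{\alpha}_{\nu}$ together with \cref{ass}: continuity of $\xi\mapsto \int_S|\langle \xi,\eta\rangle|^{\alpha}\nu(d\eta)$ on the compact unit sphere and the non-degeneracy of $\nu$ (its support linearly spans $\R^d$) force this map to be strictly positive, hence bounded below.

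Given this, the three estimates follow from direct block-wise computations. For \eqref{eq:p-la}, Plancherel gives
\[
\|\Delta_j \La u\|_{L^2}^2 \;=\; \sum_{k\in\mathbb{Z}^d}\rho_j(k)^2\psi^{\alpha}_{\nu}(k)^2|\hat u(k)|^2 \;\lesssim\; 2^{2j\alpha}\,\|\Delta_j u\|_{L^2}^2
\]
for $j\geqslant 0$, using $|k|\sim 2^j$ on $\operatorname{supp}\rho_j$, and a trivial bound for the low-frequency block $j=-1$. Multiplying by $2^{j(\beta-\alpha)}$ and taking the supremum in $j$ yields \eqref{eq:p-la}. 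For the first part of \eqref{eq:p-semi}, the same approach gives $\|\Delta_j P_t u\|_{L^2}\lesssim e^{-c t 2^{j\alpha}}\|\Delta_j u\|_{L^2}$ for $j\geqslant 0$, and the scalar inequality $\sup_{x\geqslant 0}x^{\theta/\alpha}e^{-cx}\lesssim 1$ applied to $x=t\,2^{j\alpha}$ delivers the factor $t^{-\theta/\alpha}\vee 1$. For the second estimate in \eqref{eq:p-semi}, I use the elementary bound $1-e^{-x}\leqslant x^{\vartheta/\alpha}$ valid for all $x\geqslant 0$ whenever $\vartheta/\alpha\in[0,1]$ (check separately for $x\leqslant 1$ and $x\geqslant 1$), applied to $x=t\psi^{\alpha}_{\nu}(k)\lesssim t\,2^{j\alpha}$, producing the gain $t^{\vartheta/\alpha}2^{j\vartheta}$.

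There is no serious obstacle here; the only points that deserve a moment's care are (i) the low-frequency block $j=-1$, where $\psi^{\alpha}_{\nu}$ vanishes at the origin but is bounded on $\operatorname{supp}\rho_{-1}$, so the estimates survive with constants absorbed into the implicit $\lesssim$, and (ii) the assertion that $\La$ extends as a bounded operator to $\calC^{\beta}_2(\mathbb{T}^d)$, which is in fact the content of \eqref{eq:p-la} combined with the Plancherel-based definition of the extension: for $u\in\calC^{\beta}_2(\mathbb{T}^d)$, set $\La u:=\sum_j \F^{-1}_{\mathbb{T}^d}(\rho_j\psi^{\alpha}_{\nu}\hat u)$, which converges in $\calC^{\beta-\alpha}_2(\mathbb{T}^d)$ by the block estimate and coincides with the classical definition on smooth $u$.
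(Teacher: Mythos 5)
Your proposal is correct and follows essentially the same route as the paper's own proof: Plancherel on each Littlewood--Paley block converts the operator estimates into pointwise bounds on the Fourier symbols $\psi^{\alpha}_{\nu}(k)$ and $e^{-t\psi^{\alpha}_{\nu}(k)}$, combined with the two-sided comparability $\psi^{\alpha}_{\nu}(k)\sim|k|^{\alpha}$ (which the paper uses implicitly and you make explicit up front), the scalar bound $\sup_{x\geq 0}x^{\theta/\alpha}e^{-cx}<\infty$, and the Hölder bound $|1-e^{-x}|\leq x^{\vartheta/\alpha}$ for $\vartheta/\alpha\in[0,1]$. The only cosmetic difference is that the paper phrases the second block estimate via the observation that $e^{-t\psi^{\alpha}_{\nu}(\cdot)}$ is Schwartz away from the origin rather than via your explicit scalar inequality, but the content is identical.
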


\noindent For functions with vanishing zero-order Fourier mode, we can improve the Schauder estimates for large $t>0$. This is established in the following lemma, the proof can be found in \cref{Appendix A}.

\begin{lemma}\label{lem:exp-schauder}
Let $(P_{t})$ be the $(-\La)$-semigroup on the torus $\mathbb{T}^{d}$ as defined above. Then for $g\in\calC^{\beta}_{2}$, $\beta\in\R$, with $\hat{g}(0)=\F_{\mathbb{T}^{d}}(g) (0)=0$, exponential Schauder estimates hold true. That is, for any $\theta\geqslant 0$, there exists $c>0$, such that 
\begin{align*}
\norm{P_{t}g}_{\calC^{\beta+\theta}_{2}(\mathbb{T}^{d})}\lesssim t^{-\theta/\alpha}e^{-ct}\norm{g}_{\calC^{\beta}_{2}(\mathbb{T}^{d})}.
\end{align*} 
\end{lemma}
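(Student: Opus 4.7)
The plan is to exploit the spectral gap that \cref{ass} enforces on the symbol $\psi^{\alpha}_{\nu}$. Because $\psi^{\alpha}_{\nu}$ is continuous, positive off the origin, and $\alpha$-homogeneous, the quantity $c_{0}:=\inf_{k\in\mathbb{Z}^{d}\setminus\{0\}}\psi^{\alpha}_{\nu}(k)$ is strictly positive: one has $\psi^{\alpha}_{\nu}(k)=\abs{k}^{\alpha}\psi^{\alpha}_{\nu}(k/\abs{k})\geqslant(\min_{\abs{\xi}=1}\psi^{\alpha}_{\nu}(\xi))\abs{k}^{\alpha}$, and $\abs{k}^{\alpha}\geqslant 1$ for $k\in\mathbb{Z}^{d}\setminus\{0\}$. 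Moreover, since $\psi^{\alpha}_{\nu}(0)=0$, the condition $\hat{g}(0)=0$ is preserved under $P_{t}$.

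First I would establish a pure contraction estimate on the closed subspace of functions with vanishing zero mode: for every $g\in\calC^{\beta}_{2}(\mathbb{T}^{d})$ with $\hat{g}(0)=0$ and every $s\geqslant 0$,
\begin{align*}
\norm{P_{s}g}_{\calC^{\beta}_{2}(\mathbb{T}^{d})}\leqslant e^{-c_{0}s}\norm{g}_{\calC^{\beta}_{2}(\mathbb{T}^{d})}.
\end{align*}
This follows from Plancherel applied block by block: one has
\begin{align*}
\norm{\Delta_{j}P_{s}g}_{L^{2}(\mathbb{T}^{d})}^{2}=\sum_{k\in\mathbb{Z}^{d}}\abs{\rho_{j}(k)}^{2}e^{-2s\psi^{\alpha}_{\nu}(k)}\abs{\hat{g}(k)}^{2},
\end{align*}
and on the support of $k\mapsto\rho_{j}(k)\hat{g}(k)$ one has $k\neq 0$ (for $j\geqslant 0$ because $\rho_{j}$ is supported in an annulus, for $j=-1$ because $\hat{g}(0)=0$), so $e^{-s\psi^{\alpha}_{\nu}(k)}\leqslant e^{-c_{0}s}$ throughout and taking the supremum in $j$ gives the claim.

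Second, to combine this contraction with the usual smoothing factor $t^{-\theta/\alpha}$, I would split at $t=1$. For $t\leqslant 1$, the standard estimate of \cref{lem:periodic-semi-est} already yields $\norm{P_{t}g}_{\calC^{\beta+\theta}_{2}}\lesssim t^{-\theta/\alpha}\norm{g}_{\calC^{\beta}_{2}}$, and since $e^{-ct}\geqslant e^{-c}$ on $[0,1]$ the exponential factor can be absorbed into the constant. For $t\geqslant 1$, I would factor $P_{t}=P_{1/2}\circ P_{t-1/2}$: the function $P_{t-1/2}g$ still has vanishing zero mode, so the contraction step gives $\norm{P_{t-1/2}g}_{\calC^{\beta}_{2}}\leqslant e^{-c_{0}(t-1/2)}\norm{g}_{\calC^{\beta}_{2}}$, while \cref{lem:periodic-semi-est} applied to $P_{1/2}$ supplies the smoothing from $\calC^{\beta}_{2}$ to $\calC^{\beta+\theta}_{2}$ with a constant depending only on $\theta$. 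Choosing any $c\in(0,c_{0})$, the elementary bound $\sup_{t\geqslant 1}e^{-(c_{0}-c)t}t^{\theta/\alpha}<\infty$ converts $e^{-c_{0}(t-1/2)}$ into $t^{-\theta/\alpha}e^{-ct}$ up to a multiplicative constant, which closes the estimate.

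The only genuinely non-trivial ingredient is the spectral gap $c_{0}>0$ provided by \cref{ass}; without it the argument would degenerate to the ordinary Schauder bound of \cref{lem:periodic-semi-est}. The ``contraction then smoothing'' factorization is standard, so I expect the only mildly delicate bookkeeping to concern the $j=-1$ Littlewood--Paley block, whose Fourier support meets the origin and where the hypothesis $\hat{g}(0)=0$ must be invoked explicitly.
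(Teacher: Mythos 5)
Your proof is correct, and the underlying mechanism is the same as the paper's: the spectral gap of $\psi^{\alpha}_{\nu}$ on $\mathbb{Z}^{d}\setminus\{0\}$, which is exactly what \cref{ass} guarantees (the paper uses $\psi^{\alpha}_{\nu}(k)\geqslant c\abs{k}^{\alpha}$, you use the equivalent $\inf_{k\neq 0}\psi^{\alpha}_{\nu}(k)>0$), supplies the exponential factor, and the ordinary Schauder estimate of \cref{lem:periodic-semi-est} supplies the polynomial smoothing. Where you differ is the bookkeeping that combines the two: the paper works Littlewood--Paley block by block, records the two bounds
$\norm{\Delta_{j}P_{t}g}_{L^{2}}\lesssim\min\bigl(2^{-j\beta}e^{-ct},\,2^{-j(\beta+\theta)}(t^{-\theta/\alpha}\vee 1)\bigr)\norm{g}_{\calC^{\beta}_{2}}$,
and then appeals to interpolation (implicitly: raise each bound to a power and multiply) to extract $t^{-\theta/\alpha}e^{-c't}$. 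You instead promote the spectral gap to an operator-level contraction $\norm{P_{s}g}_{\calC^{\beta}_{2}}\leqslant e^{-c_{0}s}\norm{g}_{\calC^{\beta}_{2}}$ on the zero-mean subspace and then factor $P_{t}=P_{1/2}\circ P_{t-1/2}$ for $t\geqslant 1$, applying the smoothing estimate to $P_{1/2}$ only. Your factorization route is arguably cleaner to track, since it avoids unpacking the interpolation step; the paper's version has the small advantage of producing a single explicit block-wise bound that it can reuse elsewhere. Both arguments handle the $j=-1$ block by the same observation ($\rho_{-1}$ meets the origin but $\hat{g}(0)=0$), and both produce a constant $c$ strictly smaller than the spectral gap, as they must.
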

\noindent In the sequel, we will employ the following duality result for Besov spaces on the torus. For Besov spaces on $\R^{d}$, the result is proven in \cite[Proposition 2.76]{Bahouri2011}. The same proof applies for Besov spaces on the torus (cf.~also \cite[Theorem in Section 3.5.6]{Schmeisser1987}).
\begin{lemma}\label{lem:duality}
Let $\theta\in\R$ and $f,g \in C^{\infty}(\mathbb{T}^{d})$. Then we have the duality estimate:
\begin{align}
\abs{\langle f, g\rangle}\lesssim \norm{f}_{B^{\theta}_{2,2}(\mathbb{T}^{d})}\norm{g}_{B^{-\theta}_{2,2}(\mathbb{T}^{d})}.
\end{align}
In particular, the mapping $(f,g)\mapsto \langle f,g\rangle$ can be extended uniquely to $f\in B^{\theta}_{2,2}(\mathbb{T}^{d})$, $g\in B^{-\theta}_{2,2}(\mathbb{T}^{d})$.
\end{lemma}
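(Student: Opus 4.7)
The plan is to use the almost-orthogonality of the Littlewood--Paley decomposition together with Cauchy--Schwarz twice: once in $L^2(\mathbb{T}^d)$ on each diagonal block, and once in $\ell^2$ over the dyadic index. Since $f,g\in C^\infty(\mathbb{T}^d)$ I can freely write
\begin{equation*}
\langle f,g\rangle = \Big\langle \sum_{j\geqslant -1}\Delta_j f,\;\sum_{k\geqslant -1}\Delta_k g\Big\rangle = \sum_{j,k\geqslant -1}\langle \Delta_j f,\Delta_k g\rangle,
\end{equation*}
and the property 4.) of the dyadic partition of unity (supports of $p_i,p_j$ disjoint for $|i-j|>1$) forces $\langle \Delta_j f,\Delta_k g\rangle=0$ unless $|j-k|\leqslant 1$, by Plancherel/Parseval on $\mathbb{T}^d$. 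Thus only the $O(1)$-wide diagonal contributes.

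Next I bound each term by Cauchy--Schwarz in $L^2(\mathbb{T}^d)$: $|\langle \Delta_j f,\Delta_k g\rangle|\leqslant \|\Delta_j f\|_{L^2}\|\Delta_k g\|_{L^2}$. Inserting and removing the weights $2^{j\theta}$ and $2^{-k\theta}$ yields
\begin{equation*}
|\langle \Delta_j f,\Delta_k g\rangle|\leqslant 2^{(k-j)\theta}\paren[\big]{2^{j\theta}\|\Delta_j f\|_{L^2}}\paren[\big]{2^{-k\theta}\|\Delta_k g\|_{L^2}}\lesssim_\theta \paren[\big]{2^{j\theta}\|\Delta_j f\|_{L^2}}\paren[\big]{2^{-k\theta}\|\Delta_k g\|_{L^2}},
\end{equation*}
where $2^{(k-j)\theta}$ is bounded uniformly in $j,k$ by a constant depending only on $\theta$, since $|j-k|\leqslant 1$. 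Summing over the $O(1)$-wide band $|k-j|\leqslant 1$ and applying Cauchy--Schwarz in $\ell^2(\mathbb{Z})$ in $j$ then gives
\begin{equation*}
|\langle f,g\rangle| \lesssim_\theta \sum_{j\geqslant -1}\paren[\big]{2^{j\theta}\|\Delta_j f\|_{L^2}}\paren[\big]{2^{-j\theta}\|\Delta_j g\|_{L^2}} \lesssim \|f\|_{B^{\theta}_{2,2}(\mathbb{T}^d)}\|g\|_{B^{-\theta}_{2,2}(\mathbb{T}^d)},
\end{equation*}
which is the claimed estimate.

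For the extension statement, I would invoke density: $C^\infty(\mathbb{T}^d)$ is dense in $B^{\theta}_{2,2}(\mathbb{T}^d)$ for every $\theta\in\R$ (this holds because the $\ell^q$-sum defining the norm is separable and truncation of Littlewood--Paley blocks yields smooth approximants converging in norm). Given $f\in B^{\theta}_{2,2}$ and $g\in B^{-\theta}_{2,2}$, I choose smooth sequences $f_n\to f$, $g_n\to g$ in the respective Besov norms; the bilinear form $(f_n,g_n)\mapsto \langle f_n,g_n\rangle$ is Cauchy by the estimate just proved, so its limit exists, is independent of the approximating sequences, and satisfies the same inequality. This defines $\langle\cdot,\cdot\rangle$ uniquely as a continuous bilinear form on $B^{\theta}_{2,2}\times B^{-\theta}_{2,2}$.

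There is no serious obstacle here; the only subtle point is confirming that the Littlewood--Paley decomposition on the torus enjoys the same two-sided spectral cut-off and orthogonality as on $\R^d$, which is ensured by defining $\Delta_j u=\F_{\mathbb{T}^d}^{-1}(\rho_j \F_{\mathbb{T}^d}u)$ with a compactly supported partition of unity, so that the Fourier series $\sum_{k\in\mathbb{Z}^d}\widehat{\Delta_j f}(k)\overline{\widehat{\Delta_k g}(k)}$ vanishes for $|j-k|>1$. Everything else is a direct transcription of the $\R^d$ proof in \cite[Proposition 2.76]{Bahouri2011}.
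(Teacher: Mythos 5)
Your proof is correct and is, in substance, exactly the standard Littlewood--Paley argument from \cite[Proposition 2.76]{Bahouri2011} that the paper itself simply cites (observing that the same proof carries over to the torus, cf.~\cite{Schmeisser1987}). The only cosmetic issue is the notational clash in your last paragraph, where $k$ simultaneously denotes the Fourier summation index and a Littlewood--Paley block index; the mathematics underneath is fine.
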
 
Let us define the periodic Bessel-potential space or fractional Sobolev space for $s\in\R$,
\begin{align*}
H^{s}(\mathbb{T}^{d})=\biggl\{u\in \mathcal{S}'(\mathbb{T}^{d})\biggm| \norm{u}_{H^{s}(\mathbb{T}^{d})}^{2}=\sum_{k\in\mathbb{Z}^{d}}(1+\abs{k}^{2})^{s}\abs{\hat{f}(k)}^{2}<\infty\biggr\},
\end{align*} and the homogeneous periodic Bessel-potential space
\begin{align*}
\dot{H}^{s}(\mathbb{T}^{d})=\biggl\{u\in \mathcal{S}'(\mathbb{T}^{d})\biggm| \norm{u}_{H^{s}(\mathbb{T}^{d})}^{2}=\sum_{k\in\mathbb{Z}^{d}}\abs{k}^{2s}\abs{\hat{f}(k)}^{2}<\infty\biggr\}.
\end{align*}
\noindent Motivated by the corresponding characterization of periodic Besov spaces from \cite[Section 3.5.4]{Schmeisser1987}, we define the homogeneous Besov space on the torus for $\theta\in (0,1)$ with notation $\Delta_{h}u (x):=u(x+h)-u(x)$, $h,x\in\mathbb{T}^{d}$ as follows:
\begin{align}\label{eq:p-bs2}
\dot{B}^{\theta}_{2,2}(\mathbb{T}^{d}):=\biggl\{u\in L^{2}(\mathbb{T}^{d})\biggm| \norm{u}_{\dot{B}^{\theta}_{2,2}(\mathbb{T}^{d})}^{2}:=\int_{\mathbb{T}^{d}}\abs{h}^{-2\theta}\norm{\Delta_{h}u}_{L^{2}(\mathbb{T}^{d})}^{2}\frac{dh}{\abs{h}^{d}}<\infty\biggr\}.
\end{align}
For $\theta=1$, we set $\dot{B}^{1}_{2,2}(\mathbb{T}^{d}):=\dot{H}^{1}(\mathbb{T}^{d})$.
Using derivatives of $u$, one can define homogeneous periodic Besov spaces in that way also for $\theta\geqslant 1$ (cf. \cite[Section 3.5.4]{Schmeisser1987}), but we will not need them below.
We also refer to \cite[(iv) of Theorem, Section 3.5.4]{Schmeisser1987} for an equivalent characterization of spaces $B^{\theta}_{2,2}(\mathbb{T}^{d})$ for $\theta\in (0,1)$ in terms of the differences $\Delta_{h}u$.
 
\begin{subsection}*{Strategy to prove the main result}
To prove the CLT in \cref{thm:main-thm-ph}, we distinguish between the cases $\alpha=2$ and $\alpha\in (1,2)$.
In the following, we briefly summarize our strategy to prove the convergences \eqref{eq:mainr1} (Brownian case, cf.~\cite[Chapter 3, Section 4.2]{Bensoussan1978} in the case of $C^{2}_{b}$-drift) and \eqref{eq:mainr2} (pure Lévy noise case, cf.~\cite{Franke2007} in the case of $C^{3}_{b}$-drift).\\
First we prove existence of a unique invariant probability measure $\pi$ for $X^{\mathbb{T}^{d}}$. To that aim, we solve in \cref{sec:s-FP} the singular Fokker-Planck equation with the paracontrolled approach in $\calC^{\alpha+\beta-1}_{1}$, yielding a continuous (as $\alpha+\beta-1>0$) Lebesgue-density. Furthermore, we prove a strict maximum principle on compacts for the Fokker-Planck equation. In \cref{sec:inv-m} an application of Doeblin's theorem then yields existence and uniqueness of the invariant ergodic probability measure $\pi$ for $\mathfrak{L}$ with a strictly positive Lebesgue density $\rho_{\infty}$. Doeblin's theorem furthermore yields pointwise spectral gap estimates on the semigroup $(T_{t}^{\mathbb{T}^{d}})_{t\geqslant 0}$ associated to $\mathfrak{L}$, i.e.~the process $X^{\mathbb{T}^{d}}$ is exponentially ergodic. We then extend those pointwise spectral gap estimates to $L^{2}(\pi)$-spectral gap estimates. This enables to solve the Poisson equation in \cref{cor:good-P-eq} for right-hand sides that are elements of $L^{2}(\pi)$ and that have vanishing mean under $\pi$. In particular, we can solve the Poisson equation with right-hand side $F^{m}-\langle F^{m}\rangle_{\pi}$ for $F^{m}\in C^{\infty}(\mathbb{T}^{d})$ for each fixed $m\in\N$, where $F^{m}\to F$ in $\mathcal{X}^{\beta,\gamma}_{\infty}(\mathbb{T}^{d})$, denoting the solution by $\chi^{m}$. We then prove convergence of $(\chi^{m})_{m}$ in $L^{2}(\pi)$ utilizing a Poincaré-type estimate for the opertor $\mathfrak{L}$ and combining with the theory from \cite{klo}. Via solving the resolvent equation $(\lambda-\mathfrak{L})g=G$ in \cref{sec:res-eq} with the paracontrolled approach for right-hand-sides in $G\in L^{2}(\pi)$ or $G=F^{i}$, $i=1,...,d$, we then obtain in \cref{sec:s-P-eq} convergence of $(\chi^{m})_{m}$ in $(\calC^{\alpha+\beta}_{2}(\mathbb{T}^{d}))^{d}$ to a limit $\chi$ which indeed solves the Poisson equation $(-\mathfrak{L})\chi=F-\langle F\rangle_{\pi}$ with singular right-hand side $F-\langle F\rangle_{\pi}$. Here the mean $\langle F\rangle_{\pi}$ can be defined in a stable manner using the regularity, respectively the paracontrolled structure, of the density $\rho_{\infty}$, cf. \cref{thm:def-F-pi-int}.
Decomposing the drift in terms of the solution to the Poisson equation and Dynkin's martingale, we can finally prove the functional CLT in \cref{sec:CLT}.\\ 
Via Feynman-Kac formula, the CLT yields the periodic homogenization result of \cref{cor:PDEhomog} for the solution to the associated Cauchy problem with operator $\mathfrak{L}^{\epsilon}$ as $\epsilon\to 0$, where formally $\mathfrak{L}^{\epsilon}f=-\La f+\epsilon^{-1} F(\epsilon^{-1} \cdot)\cdot\nabla f$. 
\end{subsection}
\end{section}
\begin{section}{Singular Fokker-Planck equation and a strict maximum principle}\label{sec:s-FP}
This section features the results on the Fokker-Planck equation, \cref{thm:ex-fp-d} and \cref{prop:max-p}, that will be of use in \cref{sec:inv-m} below.\\
Let us define the blow-up spaces for $\gamma\in (0,1)$,
\begin{align*}
\mathcal{M}_{T,0}^{\gamma}X:=\bigl\{u:(0,T]\to X\bigm| \sup_{t\in[0,T]}t^{\gamma}\norm{u_{t}}_{X}<\infty\bigr\}
\end{align*} and 
\begin{align*}
C^{1,\gamma}_{T,0}X:=\biggl\{u:(0,T]\to X\biggm| \sup_{0\leqslant s<t\leqslant T}\frac{s^{\gamma}\norm{u_{t}-u_{s}}_{X}}{\abs{t-s}}<\infty\biggr\}
\end{align*} with blow-up at $t=0$.\\
The solution to the Fokker-Planck eqution with initial condition equal to a Dirac measure, will have a blow-up at time $t=0$ due to the singularity of the initial condition. 
A direct computation shows that the Dirac measure in $x\in\R^{d}$ satisfies $\delta_{x}\in\calC^{-d(1-\frac{1}{p})}_{p}$ for any $p\in[1,\infty]$, in particular $\delta_{x}\in\calC^{0}_{1}$. Moreover, one can show that the map $x\mapsto\delta_{x}\in\calC^{-\epsilon}_{1}$ is continuous for any $\epsilon>0$. The next theorem proves existence of a mild solution to the Fokker-Planck equation 
\begin{align*}
(\partial_{t}-\mathfrak{L}^{\ast})\rho_{t}=0, \quad \rho_{0}=\mu,
\end{align*} with initial condition $\mu\in\calC^{-\epsilon}_{1}$ for small $\epsilon>0$.
Here, $\mathfrak{L}^{*}$ denotes the formal Lebesgue-adjoint to $\mathfrak{L}$, 
\begin{align*}
\mathfrak{L}^{\ast}f:=-\La f-\nabla\cdot (Ff)=-\La f-div (Ff).
\end{align*}  
The proof of \cref{thm:ex-fp-d} is similar to \cite[Theorem 4.7]{kp-sk}. 
\begin{theorem}\label{thm:ex-fp-d}
Let $T>0$, $\alpha\in (1,2]$ and $p\in[1,\infty]$. Let either $\beta\in(\frac{1-\alpha}{2},0)$ and $F\in \calC^{\beta}_{\R^{d}}$ or $F\in\mathcal{X}^{\beta,\gamma'}_{\infty}$ for $\beta\in (\frac{2-2\alpha}{3},\frac{1-\alpha}{2}]$, $\gamma'\in (\frac{2\beta+2\alpha-1}{\alpha},1)$.\\ 
Then, for any small enough $\epsilon>0$ and any initial condition $\mu\in\calC^{-\epsilon}_{p}$, there exists a unique mild solution $\rho$ to the Fokker-Planck equation in $\mathcal{M}_{T,0}^{\gamma}\calC^{\alpha+\beta-1}_{p}\cap C_{T}^{1-\gamma}\calC^{\beta}_p\cap C_{T,0}^{1,\gamma}\calC^{\beta}_p$ for $\gamma\in (C(\epsilon),1)$ (for some $C(\epsilon)\in (0,1)$) in the Young regime and $\gamma\in (\gamma',\frac{\alpha\gamma'}{2-\alpha-3\beta})$ in the rough regime, i.e.
\begin{align}\label{eq:fp-mild-form}
\rho_{t}=P_{t}\mu+\int_{0}^{t}P_{t-s}(-\nabla\cdot (F\rho_{s}))ds,
\end{align}
where  $(P_{t})_{t\geqslant 0}$ denotes the $(-\La)$-semigroup.\\
In the rough case, the solution satisfies
\begin{align}
\rho_{t}=\rho_{t}^{\sharp}+\rho_{t}\para I_{t}(-\nabla \cdot F)
\end{align} where $\rho_{t}^{\sharp}\in\mathcal{M}_{T,0}^{\gamma}\calC^{2(\alpha+\beta)-2}_p\cap C_{T}^{1-\gamma}\calC^{2\beta-2+\alpha}_p\cap C_{T,0}^{1,\gamma}\calC^{2\beta-2+\alpha}_p$ and $I_{t}(v):=\int_{0}^{t}P_{t-s}v_{s}ds$.\\
Moreover, the solution depends continuously on the data $(F,\mu)\in\mathcal{X}^{\beta,\gamma'}_{\infty}\times\calC^{-\epsilon}_{p}$. Furthermore, for any fixed $t>0$, the solution satisfies $(\rho_{t},\rho_{t}^{\sharp})\in\calC^{\alpha+\beta-1}\times\calC^{2(\alpha+\beta)-2}$.\\
If $(F,\mu)$ are $1$-periodic distributions, then the solution $\rho_{t}$ is $1$-periodic.
\end{theorem}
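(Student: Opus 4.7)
The plan is to adapt the fixed-point argument of \cite[Theorem 4.7]{kp-sk} to the divergence-form adjoint $\mathfrak{L}^{*}$ and to accommodate the singular initial datum $\mu\in\calC^{-\epsilon}_{p}$, which forces an additional time blow-up at $t=0$. A mild solution is by definition a fixed point of
$$\Phi(\rho)_{t}:=P_{t}\mu+\int_{0}^{t}P_{t-s}\bigl(-\nabla\cdot(F\rho_{s})\bigr)\,ds,$$
and the Schauder estimates of \cref{lem:periodic-semi-est} give $P_{t}\mu\in\calC^{\alpha+\beta-1}_{p}$ with blow-up of order $t^{-(\alpha+\beta-1+\epsilon)/\alpha}$, which lies in $\mathcal{M}_{T,0}^{\gamma}\calC^{\alpha+\beta-1}_{p}$ as soon as $\gamma>(\alpha+\beta-1+\epsilon)/\alpha$. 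The latter is compatible with the remaining constraints once $\epsilon$ is chosen sufficiently small.

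In the Young regime $\beta>(1-\alpha)/2$ the sum $\beta+(\alpha+\beta-1)$ is strictly positive, so \eqref{eq:paraproduct-estimates} gives $F\rho_{s}\in\calC^{\beta}_{p}$ and $\nabla\cdot(F\rho_{s})\in\calC^{\beta-1}_{p}$ whenever $\rho_{s}\in\calC^{\alpha+\beta-1}_{p}$. Combining the smoothing effect of $P_{t-s}$ (gaining almost order $\alpha$ at cost $(t-s)^{-(\alpha-1)/\alpha}$) with a Beta-type time integral shows that $\Phi$ preserves the target space $\mathcal{M}_{T,0}^{\gamma}\calC^{\alpha+\beta-1}_{p}$ and is a contraction on a small enough interval $[0,T_{0}]$; iteration covers $[0,T]$ and yields Lipschitz dependence on $(F,\mu)$ at the same time. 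The time regularity in $C_{T}^{1-\gamma}\calC^{\beta}_{p}\cap C_{T,0}^{1,\gamma}\calC^{\beta}_{p}$ is extracted from the second half of \cref{lem:periodic-semi-est} applied to the same mild formula.

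In the rough regime $\beta\in(\frac{2-2\alpha}{3},\frac{1-\alpha}{2}]$ the classical product breaks down and I substitute the paracontrolled ansatz
$$\rho_{t}=\rho_{t}^{\sharp}+\rho_{t}\para I_{t}(-\nabla\cdot F),\qquad I_{t}(v):=\int_{0}^{t}P_{t-s}v_{s}\,ds,$$
into the mild equation. Bony's decomposition $F\rho=F\para\rho+\rho\para F+F\reso\rho$ leaves only the resonance $F\reso\rho$ of negative total regularity; expanding it via the ansatz and applying the standard paracontrolled commutator $C(\rho,I_{\cdot}(-\nabla\cdot F),F):=(\rho\para I_{\cdot}(-\nabla\cdot F))\reso F-\rho\,(I_{\cdot}(-\nabla\cdot F)\reso F)$ reduces the obstruction to the a priori undefined product $I_{t}(-\nabla\cdot F)\reso F^{i}=-\sum_{j}\int_{0}^{t}P_{s}(\partial_{j}F^{j})\reso F^{i}\,ds$, which is controlled in $\calC^{2\beta+\alpha-1}$ uniformly in $t$ by the enhancement $(F,F_{2})\in\calX^{\beta,\gamma'}_{\infty}$. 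The remainder $\rho^{\sharp}$ then satisfies a mild equation whose right-hand side gains the full order $\alpha+\beta-1$, which produces precisely the improved regularity claimed. Balancing the paraproduct and Schauder estimates of \eqref{eq:paraproduct-estimates} and \cref{lem:periodic-semi-est} yields a contraction for $\gamma\in(\gamma',\frac{\alpha\gamma'}{2-\alpha-3\beta})$.

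The main obstacle is the simultaneous bookkeeping of three distinct time blow-ups---the one from the singular initial datum, the one from the enhancement of $F$, and the self-coupling through the paracontrolled fixed point---and ensuring that the admissible ranges of $\gamma$ and $\epsilon$ have nonempty intersection; this is what forces the peculiar upper bound on $\gamma$ in the rough regime and the strict lower bound $\beta>(2-2\alpha)/3$. The assertion $(\rho_{t},\rho_{t}^{\sharp})\in\calC^{\alpha+\beta-1}\times\calC^{2(\alpha+\beta)-2}$ at any fixed $t>0$ follows by reapplying the fixed-point estimate on $[t_{0},T]$ with $\mu$ replaced by $\rho_{t_{0}}$ for some $t_{0}\in(0,t)$, which removes the initial singularity. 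Finally, periodicity is preserved because $\Phi$ intertwines with the canonical projection $\iota:\R^{d}\to\mathbb{T}^{d}$: if $(F,\mu)$ is $1$-periodic, then $\Phi$ maps periodic densities to periodic ones, so by uniqueness the fixed point is periodic.
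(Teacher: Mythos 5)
Your proposal follows essentially the same route as the paper: define the paracontrolled solution space $\mathcal{D}^{\gamma}_{T,p}$, expand $F\reso\rho$ via the paracontrolled ansatz and the commutator $C_1$, reduce the ill-defined resonance to the enhanced product $I(\nabla\cdot F)\reso F^i$ supplied by $\mathcal{X}^{\beta,\gamma'}_{\infty}$, and close a fixed point using Schauder estimates with the three competing time blow-ups (from $\mu$, from the enhancement, and from the self-coupling). The Young regime, patching, Lipschitz dependence, and periodicity all match. That is the intended argument.

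There is, however, a concrete gap in your last paragraph. The statement you need to prove is that at fixed $t>0$ the solution lies in $\calC^{\alpha+\beta-1}\times\calC^{2(\alpha+\beta)-2}$, that is in $L^{\infty}$-based Besov spaces, whereas the fixed point is constructed in $L^{p}$-based spaces $\calC^{\theta}_{p}$ for arbitrary $p\in[1,\infty]$ (in particular $p=1$ for $\mu=\delta_x$). Restarting the fixed-point iteration from $\rho_{t_0}$ removes the time blow-up at $0$, but $\rho_{t_0}$ still only lies in $\calC^{\alpha+\beta-1}_{p}$, so the restarted contraction again produces a solution in $L^{p}$-scale spaces; nothing in your argument raises the integrability index. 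Besov embedding $\calC^{\theta}_{p}\hookrightarrow\calC^{\theta-d/p}$ costs $d/p$ derivatives and so does not yield the claim either. The paper handles this by an additional bootstrap step — the integrability-improving argument carried out at the end of the proof of \cite[Proposition 2.4]{PvZ}, which iteratively trades the smoothing of the semigroup against integrability gains. You need to invoke (or reproduce) that argument explicitly to close this point.
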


\begin{proof}
We will prove that we can solve the Fokker-Planck equation for initial conditions $\mu\in \calC^{-\epsilon}_{p}$ for $\epsilon=-((1-\tilde{\gamma})\alpha+\beta)$ for $\tilde{\gamma}\in [\frac{\alpha+\beta}{\alpha},1)$ in the Young regime and for $\epsilon=-((2-\tilde{\gamma})\alpha+2\beta-1)$ for $\tilde{\gamma}\in [\frac{2\beta+2\alpha-1}{\alpha}\vee 0,\gamma']$ in the rough regime. In the Young regime, we obtain a solution $\rho\in \mathcal{M}_{T,0}^{\gamma}\calC^{\alpha+\beta-1}_p\cap C_{T}^{1-\gamma}\calC^{\beta}_p\cap C_{T,0}^{\gamma,1}\calC^{\beta}_p$ for $\gamma=\tilde{\gamma}$ and the proof is analogous to \cite[Theorem 4.1]{kp-sk}. 
We thus only give the proof in the rough regime.\\
To that aim, let us define, analogously as in the proof of \cite[Theorem 4.7]{kp-sk} for $\gamma\in (\gamma',1)$ as there, 
\begin{align*}
\mathcal{L}^{\gamma,\theta}_{T,p}:=\mathcal{M}_{T,0}^{\gamma}\calC^{\theta}_{p}\cap C_{T}^{1-\gamma}\calC^{\theta-\alpha}_{p}\cap C_{T,0}^{1,\gamma}\calC^{\theta-\alpha}_{p}
\end{align*} and the paracontrolled solution space
\begin{align*}
\squeeze[1]{\mathcal{D}^{\gamma}_{T,p}:=\{(u,u^{\prime})\in\mathcal{L}^{\gamma',\alpha+\beta-1}_{T,p}\times (\mathcal{L}^{\gamma,\alpha+\beta-1}_{T,p})^{d}\mid u^{\sharp}_{t}=u_{t}-u_{t}^{\prime}\para I_{t}(-\nabla\cdot F)\in\mathcal{L}^{\gamma,2(\alpha+\beta)-2}_{T,p}\}}
\end{align*} for $p\in [1,\infty]$, equipped with the norm
\begin{align*}
\norm{u-w}_{\mathcal{D}^{\gamma}_{T,p}}:=\norm{u-w}_{\mathcal{L}_{T,p}^{\gamma',\alpha+\beta-1}}+\norm{u^{\prime}-w^{\prime}}_{(\mathcal{L}_{T,p}^{\gamma,\alpha+\beta-1})^{d}}+\norm{u^{\sharp}-w^{\sharp}}_{\mathcal{L}_{T,p}^{\gamma,2(\alpha+\beta)-1}},
\end{align*} which makes the space a Banach space.\\
For $\mu\in \calC_{p}^{-\epsilon}$, $\epsilon= -((2-\tilde{\gamma})\alpha+2\beta-2)$, we first prove that we obtain a paracontrolled solution $\rho\in \mathcal{D}^{\gamma}_{T,p}$. As the proof is similar to \cite[Theorem 4.7]{kp-sk}, we only give the essential arguments of the proof. Notice that compared to \cite[Theorem 4.7]{kp-sk}, here we consider the operator $\mathfrak{L}^{*}$ instead of $\mathfrak{L}$ and initial conditions in $\calC^{-\epsilon}_{p}$ for $\epsilon=-((2-\tilde{\gamma})\alpha+2\beta-2)$, hence $\rho_{0}=\rho_{0}^{\sharp}$.\\
For $\rho\in \mathcal{D}^{\gamma}_{T,p}$ the resonant product $F\reso\rho =(F^{i}\reso\rho)_{i=1,..,d}$ is well-defined and satisfies
\begin{align*}
F^{i}\reso\rho=F^{i}\reso\rho^{\sharp}+\rho^{\prime}\cdot(F^{i}\reso I_{t}(\nabla\cdot F))+ C_{1}(\rho^{\prime},I_{t}(\nabla\cdot F),F^{i})
\end{align*} for the paraproduct commutator
\begin{align*}
C_{1}(f,g,h):=(f\para g)\reso h-f\cdot(g\reso h).
\end{align*}
Using the paraproduct estimates, we obtain Lipschitz dependence of the product on $(F,\rho)\in\mathcal{X}^{\beta,\gamma'}_{\infty}\times\mathcal{D}^{\gamma}_{T,p}$, that is,
\begin{align*}
\hspace{1em}&\hspace{-1em}
\norm{F\reso\rho}_{\mathcal{M}_{T}^{\gamma'}\calC^{\alpha+2\beta-1}_{p}}
\\&\lesssim \norm{F}_{\mathcal{X}^{\beta,\gamma'}_{\infty}}(1+\norm{F}_{\mathcal{X}^{\beta,\gamma'}_{\infty}})\paren[\big]{\norm{\rho}_{\mathcal{M}_{T}^{\gamma'}\calC^{\alpha+\beta-1}_{p}}+\norm{\rho^{\prime}}_{(\mathcal{M}_{T}^{\gamma'}\calC^{\alpha+\beta-1-\delta}_{p})^{d}}+\norm{\rho^{\sharp}}_{\mathcal{M}_{T}^{\gamma'}\calC^{2(\alpha+\beta)-2-\delta}_{p}}}\\
&\lesssim\norm{F}_{\mathcal{X}^{\beta,\gamma'}_{\infty}}(1+\norm{F}_{\mathcal{X}^{\beta,\gamma'}_{\infty}})\paren[\big]{\norm{\rho}_{\mathcal{M}_{T}^{\gamma'}\calC^{\alpha+\beta-1}_{p}}+\norm{\rho^{\prime}}_{(\mathfrak{L}_{T,p}^{\gamma,\alpha+\beta-1})^{d}}+\norm{\rho^{\sharp}}_{\mathfrak{L}_{T,p}^{\gamma,2(\alpha+\beta)-2}}}\\&\lesssim\norm{F}_{\mathcal{X}^{\beta,\gamma'}_{\infty}}(1+\norm{F}_{\mathcal{X}^{\beta,\gamma'}_{\infty}})\norm{\rho}_{\mathcal{D}_{T,p}^{\gamma}}
\end{align*} for $\delta=\alpha-\alpha\frac{\gamma'}{\gamma}$, using moreover the interpolation estimate from \cite[Lemma 3.7, (3.13)]{kp-sk}.\\
The contraction map will be defined as
\begin{align*}
\mathcal{D}^{\gamma}_{\overline{T},p}\ni (\rho,\rho^{\prime})\mapsto (\phi(\rho),\rho)\in \mathcal{D}^{\gamma}_{\overline{T},p}
\end{align*} with
\begin{align*}
\phi(\rho)_{t}:=P_{t}\mu+I_{t}(-\nabla\cdot (F\rho)).
\end{align*}
Here, $\overline{T}$ will be chosen small enough, such that the above map becomes a contraction. Afterwards the solutions on the subintervals of length $\overline{T}$ are patched together. Notice that the fixed point satisfies $\rho^{\prime}=\rho$.\\
As $\epsilon=-((2-\tilde{\gamma})\alpha+2\beta-2)$, we obtain by the semigroup estimates from \cite[Lemma 2.5]{kp-sk}, that
\begin{align}\label{eq:T-est}
\norm{ P_{t}\mu}_{\calC^{2(\alpha+\beta)-2}_{p}}\lesssim t^{-\tilde{\gamma}}\norm{\mu}_{\calC^{-\epsilon}_{p}}.
\end{align} 
Utilizing the Schauder estimates \cite[Corollary 3.2]{kp-sk} (which apply by a time change also for blow-up-spaces with blow-up at $t=0$ instead of blow-ups at $t=T$) and the estimate for the resonant product yields
\begin{align*}
\norm{I(\nabla\cdot (F\rho))}_{\mathcal{L}_{T,p}^{\gamma,\alpha+\beta-1}}&\lesssim T^{\gamma-\gamma'}\norm{\nabla\cdot (F\rho)}_{\mathcal{M}_{T,0}^{\gamma'}\calC^{\beta-1}_{p}}\\&\lesssim T^{\gamma-\gamma'}\norm{F}_{\mathcal{X}^{\beta,\gamma'}_{\infty}}(1+\norm{F}_{\mathcal{X}^{\beta,\gamma'}_{\infty}})\norm{\rho}_{\mathcal{D}^{\gamma}_{T,p}}.
\end{align*}
Moreover, we have that for a solution $\rho$,
\begin{align*}
\rho_{t}^{\sharp}=P_{t}\mu + C_{2}(\rho,\nabla\cdot F)_{t}+I_{t}(-\nabla\cdot (\rho\reso F))+I_{t}(-\nabla\cdot(\rho\arap F))+I_{t}(-\nabla\rho\para F)
\end{align*} for the semigroup commutator
\begin{align*}
C_{2}(u,v)=I(u\para v)- u\para I(v).
\end{align*} 
Using \eqref{eq:T-est} and \cite[Corollary 3.2]{kp-sk}, we obtain
\begin{align*}
\norm{\rho^{\sharp}}_{\mathcal{L}_{T,p}^{\gamma,2(\alpha+\beta)-2}}\lesssim \norm{\mu}_{\calC^{-\epsilon}_{p}}+T^{\gamma-\gamma'}\norm{F}_{\mathcal{X}^{\beta,\gamma'}_{\infty}}\norm{\rho}_{\mathfrak{L}^{\gamma',\alpha+\beta-1}_{T,p}}.
\end{align*}
Hence, as $\gamma>\gamma'$, replacing $T$ by $\overline{T}\leqslant T$ small enough, we obtain a paracontrolled solution in $\mathcal{D}^{\gamma}_{\overline{T},p}$. Then, we paste the solutions on the subintervals together to obtain a solution on $[0,T]$, cf.~in the proof of \cite[Theorem 4.7]{kp-sk}.\\ 
It remains to justify that the solution at fixed times $t>0$ satisfies $(\rho_{t},\rho_{t}^{\sharp})\in\calC^{\alpha+\beta-1}\times\calC^{2(\alpha+\beta-1)}$, i.e. that we can increase the integrability from $p$ to $\infty$. From the above, we obtain $(\rho,\rho^{\sharp})\in C ([t,T],\calC^{\alpha+\beta-1}_p)\times C([t,T],\calC^{2(\alpha+\beta-1)}_p)$. Then, we can apply the argument to increase the integrability, that was carried out in the end of the proof of  \cite[Proposition 2.4]{PvZ}, to obtain that indeed $(\rho,\rho^{\sharp})\in C ([t,T],\calC^{\alpha+\beta-1})\times C([t,T],\calC^{2(\alpha+\beta-1)})$ for any $t\in (0,T)$.\\
The continuous dependence of the solution on the data $(F,\mu)$ follows analogously as in \cite[Theorem 4.12]{kp-sk}, with the above estimates and a Gronwall-type argument.\\
If $(F,\mu)$ are $1$-periodic distributions, then $P_{t}\mu=p_{t}\ast\mu$ is $1$-periodic, as the convolution with the fractional heat-kernel $p_{t}$ with a periodic distribution yields a periodic function and the fixed point argument can be carried out in the periodic solution space $\mathcal{D}_{T,p}^{\gamma}(\mathbb{T}^{d})$.
\end{proof}
\begin{corollary}\label{cor:density}
Let $X$ be the unique martingale solution of the singular periodic SDE \eqref{eq:sde} for $\mathfrak{L}$ (acting on functions $f:\R^{d}\to\R$), starting at $x\in\R^{d}$. Let $(t,y)\mapsto\rho_{t}(x,y)$ be the mild solution of the Fokker-Planck equation with $\rho_{0}=\delta_{x}$ from \cref{thm:ex-fp-d}. Then for any $t>0$, the map $(x,y)\mapsto \rho_{t}(x,y)$ is continuous.\\
Furthermore, for any $f\in L^{\infty}(\R^{d})$, 
\begin{align}\label{eq:d}
\E_{X_{0}=x}[f(X_{t})]=\int_{\R^{d}} f(y)\rho_{t}(x,y)dy,
\end{align} that is, $\rho_{t}(x,\cdot)$ is the density of $Law(X_{t})$, if $X_{0}=x$, with respect to the Lebesgue measure. In particular, for the projected solution $X^{\mathbb{T}^{d}}$ with drift $F\in\mathcal{X}^{\beta,\gamma'}_{\infty}(\mathbb{T}^{d})$ and $f\in L^{\infty}(\mathbb{T}^{d})$ and $z\in\mathbb{T}^{d}$,
\begin{align}\label{eq:tilde-d}
\E_{X_{0}^{\mathbb{T}^{d}}=z}[f(X_{t}^{\mathbb{T}^{d}})]=\int_{\mathbb{T}^{d}} f(w)\rho_{t}(z,w)dw,
\end{align} where, by abusing notation to not introduce a new symbol for the density on the torus,
$\rho_{t}(z,w):=\rho_{t}(x,y)$ for $(x,y)\in\R^{d}$ with $(\iota(x),\iota(y))=(z,w)$, $\iota:\R^{d}\to\mathbb{T}^{d}$ denoting the canonical projection.
\end{corollary}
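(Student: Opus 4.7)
The plan is to split the corollary into three claims: (a) joint continuity of $(x,y)\mapsto\rho_t(x,y)$ for fixed $t>0$, (b) the identification of $\rho_t(x,\cdot)$ with the Lebesgue density of the law of $X_t$ under $\mathbb{Q}$ when $X_0=x$, and (c) the torus version.

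For (a), the map $\R^d\ni x\mapsto \delta_x\in \calC^{-\epsilon}_{1}$ is continuous for every small $\epsilon>0$, which one sees by testing against Schwartz functions. Combining this with the continuous dependence on initial data from \cref{thm:ex-fp-d}, we obtain continuity of $x\mapsto \rho_t(x,\cdot)\in\calC^{\alpha+\beta-1}$ at any fixed $t>0$. Since $\alpha+\beta-1>(\alpha-1)/3>0$ in the rough regime (and strictly larger in the Young regime), the embedding $\calC^{\alpha+\beta-1}\hookrightarrow C_b(\R^d)$ applies. Joint continuity then follows from the standard triangle inequality
\begin{align*}
\abs{\rho_t(x_n,y_n)-\rho_t(x,y)}\leqslant \norm{\rho_t(x_n,\cdot)-\rho_t(x,\cdot)}_{\infty}+\abs{\rho_t(x,y_n)-\rho_t(x,y)}.
\end{align*}

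For (b), fix $\varphi\in C^\infty_c(\R^d)$ and let $u$ be the mild solution on $[0,t]$ of the singular backward Kolmogorov equation $\mathcal{G}^F u=0$ with terminal condition $u(t,\cdot)=\varphi$ supplied by \cite{kp}. The martingale problem (\cref{def:martp}), applied with $f\equiv 0$, $T=t$ and $u^T=\varphi$, yields that $M_s=u(s,X_s)-u(0,x)$ is a martingale with $M_0=0$, so $u(0,x)=\E_{X_0=x}[\varphi(X_t)]$. Independently, the smooth-case forward-backward duality
\begin{align*}
\frac{d}{ds}\langle \rho_s,u(s,\cdot)\rangle=\langle \mathfrak{L}^{\ast}\rho_s,u(s,\cdot)\rangle-\langle \rho_s,\mathfrak{L}u(s,\cdot)\rangle=0
\end{align*}
extends to the singular setting by approximating $F$ by smooth $F^n\to F$ in $\mathcal{X}^{\beta,\gamma'}_\infty$ and $\delta_x$ by a smooth mollification in $\calC^{-\epsilon}_1$, and appealing to the Lipschitz dependence of the Fokker-Planck solution from \cref{thm:ex-fp-d} and of the Kolmogorov mild solution from \cite{kp} on their data. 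Passing to the limit yields $\langle \rho_t(x,\cdot),\varphi\rangle=u(0,x)=\E_{X_0=x}[\varphi(X_t)]$. Testing against $\varphi\geqslant 0$ forces $\rho_t(x,\cdot)\geqslant 0$ by the pointwise continuity from (a), and a smooth cutoff sequence $\varphi_n\uparrow 1$ yields $\int \rho_t(x,\cdot)=1$; thus $\rho_t(x,\cdot)$ is a probability density, and dominated convergence extends \eqref{eq:d} from smooth $\varphi$ to arbitrary $f\in L^\infty(\R^d)$.

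For (c), translation invariance of $F$ together with uniqueness from \cref{thm:ex-fp-d} yields $\rho_t(x+k,y+k)=\rho_t(x,y)$ for every $k\in\mathbb{Z}^d$, so $\rho_t$ descends consistently under the identification stated in the corollary's abuse of notation. Equation \eqref{eq:tilde-d} then follows from \eqref{eq:d} applied to the periodic lift of $f\in L^\infty(\mathbb{T}^d)$ after folding the $y$-integral into a fundamental domain. The principal technical obstacle in the whole argument is the rigorous promotion of the smooth-data duality $\langle \rho_s,u_s\rangle\equiv\mathrm{const}$ to the singular level: this hinges on simultaneous mollification control of both the Kolmogorov and Fokker-Planck mild solutions, which is fortunately provided by the continuous dependence statements already at our disposal.
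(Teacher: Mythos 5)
Your proof is correct, and the density identification step takes a genuinely different route from the paper. The paper approximates $F$ by smooth $F^m$, invokes the \emph{classical} Feynman--Kac formula for the strong solutions $X^m$ (giving $\E_{X^m_0=x}[f(X^m_t)]=\int f\,\rho^m_t(x,\cdot)$ at the smooth level), and passes to the limit using (i) continuous dependence of the Fokker--Planck solution on $(F,\mu)$ and (ii) convergence in distribution $X^m\to X$ established in the proof of \cref{thm:mainthm1}. You instead work directly at the singular level: \cref{def:martp} with $f\equiv 0$ yields $u(0,x)=\E_{X_0=x}[\varphi(X_t)]$ for the singular process $X$, and you then connect $u(0,x)$ to $\langle\rho_t,\varphi\rangle$ via the forward--backward duality $\frac{d}{ds}\langle\rho_s,u_s\rangle=0$, justified by mollifying both $F$ and $\delta_x$ and using Lipschitz dependence of both solution maps. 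Your route avoids invoking $X^m\to X$ in distribution (replacing it with continuity of the Kolmogorov solution map, a different piece of the same machinery), whereas the paper's route avoids having to make the adjoint pairing $\langle\rho_s,u_s\rangle$ rigorous across the $t=0$ blow-up of $\rho$. Both are sound. You also spell out joint continuity via the triangle inequality, whereas the paper establishes the two separate continuities and leaves the combination implicit; your handling of the torus reduction via translation covariance, uniqueness, and folding matches what the paper intends with ``considering $f\circ\iota$.''
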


\begin{remark}\label{rmk:identity}
Let $\rho(x,\cdot)$ be the solution of the Fokker-Planck equation started in $\delta_{x}$ from \cref{thm:ex-fp-d} and let $u^{y}$ solve the Kolmogorov backward equation with terminal condition $u_{T}=\delta_{y}$ whose existence follows from \cite[Theorem 4.7]{kp-sk}. Then due to \eqref{eq:d} and the Feynman-Kac formula (approximating $F$ and utilizing the continuity of the solutions maps) we see the equality $\rho_{t}(x,y)=u^{y}_{T-t}(x)$.
\end{remark}
\begin{remark}\label{rmk:FP-torus}
If $F\in\mathcal{X}^{\beta,\gamma'}_{\infty}(\mathbb{T}^{d})$, then by definition of $(P_{t}^{\mathbb{T}^{d}})$, $\rho(z,\cdot)$ is the mild solution of the Fokker-Planck equation on the torus (that is, $(P_{t})$ replaced by $(P_{t}^{\mathbb{T}^{d}})$ in \eqref{eq:fp-mild-form}) with $\rho_{0}(z,\cdot)=\delta_{z}$. 
\end{remark}
\begin{proof}
Continuity in $y$ follows from $\rho_{t}(x,\cdot)\in \calC^{\alpha+\beta-1}$ and $\alpha+\beta-1>0$. Continuity in $x$ follows from the continuous dependence of the solution on the initial condition $\delta_{x}$ and continuity of the map $x\mapsto\delta_{x}\in\calC^{-\epsilon}_{1}$ for $\epsilon>0$.\\
That $\rho_{t}$ is the density of $\text{Law}(X_{t})$ follows by approximation of $F$ by $F^{m}\in C^{\infty}_{b}(\R^{d})$ with $F^{m}\to F$ in $\calC^{\beta}_{\R^{d}}$, respectively in $\mathcal{X}^{\beta,\gamma'}_{\infty}$, using that $\rho$ depends continuously on the data $(F,\mu)$ and that $X^{m}\to X$ in distribution, where $X^{m}$ is the strong solution to the SDE with drift term $F^{m}$ (cf.~the proof of \cref{thm:mainthm1}) and the Feynman-Kac formula for classical SDEs. Indeed, for $m\in\N$, we have that for $f\in C^{2}_{b}$ (and thus for $f\in L^{\infty}$ by approximation),
\begin{align*}
u^{m}_{T-t}(x)=\E_{X_{0}^{m}=x}[f(X_{t}^{m})]=\int f(y)\rho_{t}^{m}(x,y)dy
\end{align*} with $(\partial_{t}+\mathfrak{L}^{m})u^{m}=\mathcal{G}^{F^{m}}u^{m}=0$, $u^{m}_{T}=f$, and $(\partial_{t}-(\mathfrak{L}^{m})^{*})\rho=0$, $\rho_{0}=\delta_{x}$. Now, we let $m\to\infty$ to obtain \eqref{eq:d}. In particular, $\rho_{t}\geqslant 0$ and $\rho_{t}\in L^{1}(dx)$. That $\rho_{t}$ is well-defined follows as $\rho_{t}$ is periodic (due to the periodicity assumption on $F$). Equality \eqref{eq:tilde-d} follows from \eqref{eq:d} considering $f\circ \iota$ instead of $f$.
\end{proof}
\begin{proposition}\label{prop:max-p}
Let $\mu\in\calC^{0}_{1}$ be a positive, nontrivial ($\mu\neq 0$) measure. Let $\rho$ be the mild solution of the Fokker-Planck equation $(\partial_{t}-\mathfrak{L}^{\ast})\rho_{t}=0$ with $\rho_{0}=\mu$.  
Then for any compact $K\subset\R^{d}$ and any $t>0$, there exists $c>0$ such that
\begin{align*}
\min_{x\in K}\rho_{t}(x)\geqslant c>0.
\end{align*} 
Let $\rho_{t}$ be as in \cref{rmk:FP-torus}. Then, in particular, for any $z\in\mathbb{T}^{d}$, $t>0$, there exists $c>0$ such that
\begin{align*}
\min_{x\in\mathbb{T}^{d}}\rho_{t}(z,x)\geqslant c>0.
\end{align*} 
\end{proposition}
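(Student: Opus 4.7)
The plan is to use the probabilistic representation from \cref{cor:density} and reduce to pointwise strict positivity of the transition density. By linearity of the mild Fokker--Planck equation and the continuous dependence on the initial datum established in \cref{thm:ex-fp-d}, the solution admits the superposition
\begin{align*}
\rho_t(y)=\int_{\R^d}\rho_t(x,y)\,\mu(dx),
\end{align*}
where $(x,y)\mapsto\rho_t(x,y)$ is the continuous transition kernel of \cref{cor:density}; on the torus with $\mu=\delta_z$ this is just $\rho_t(z,w)$. Since $\mu\geq 0$ is nontrivial (respectively a Dirac), there exist $x_0\in\R^d$, $\delta>0$ and $\eta>0$ with $\mu(\overline{B(x_0,\delta)})\geq\eta$. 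Continuity of $\rho_t$ on the compact $\overline{B(x_0,\delta)}\times K$ (respectively $\{z\}\times\mathbb{T}^d$) then turns any pointwise strict positive lower bound on that compact into the desired uniform bound, via
\begin{align*}
\min_{y\in K}\rho_t(y)\geq \eta\cdot \min_{(x,y)\in \overline{B(x_0,\delta)}\times K}\rho_t(x,y).
\end{align*}

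The core of the argument is therefore to prove $\rho_t(x,y)>0$ at every point for every $t>0$. I would combine approximation with Chapman--Kolmogorov iteration. For a smooth mollification $F^m\in C_b^\infty$ of the drift, the classical transition density $\rho^m_t(x,y)$ of the associated stable-driven SDE is strictly positive for every $t>0$ on account of \cref{ass}: the underlying Lévy process has a strictly positive density on $\R^d$, and a bounded smooth drift preserves this irreducibility. For short time $t_0>0$, the mild formula reads $\rho^m_{t_0}(x,\cdot)=P_{t_0}\delta_x+R^m_{t_0}$, with the remainder controlled by the Schauder and product estimates from the proof of \cref{thm:ex-fp-d}; strict positivity and continuity of $P_{t_0}\delta_x$ (once more by \cref{ass}) together with $\|R^m_{t_0}\|\to 0$ as $t_0\to 0$ uniformly in $m$ produce an open neighbourhood $U=U(x,t_0)$ on which $\rho^m_{t_0}(x,\cdot)\geq c>0$ uniformly in $m$. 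Passing to the limit $m\to\infty$ through the continuous dependence in \cref{thm:ex-fp-d} preserves this bound, giving $\rho_{t_0}(x,\cdot)\geq c>0$ on $U$. Iterating via the semigroup identity $\rho_{(k+1)t_0}(x,y)=\int\rho_{kt_0}(x,z)\rho_{t_0}(z,y)\,dz$ and using \cref{ass} to propagate positivity across the state space, one covers $K$ (respectively $\mathbb{T}^d$) in finitely many steps.

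The main obstacle is the uniform-in-$m$ short-time positivity on a fixed neighbourhood: the remainder in the mild formula contains the distributional product $\nabla\cdot(F^m\rho^m)$, which only makes sense via paracontrolled decomposition in the rough regime $\beta\in((2-2\alpha)/3,(1-\alpha)/2]$. One must therefore ensure that $R^m_{t_0}$ is controlled in a norm strong enough to dominate the strictly positive heat kernel pointwise while remaining uniform in $m$. The Besov regularity $\rho^m_t\in\calC^{\alpha+\beta-1}$ with $\alpha+\beta-1>0$ from \cref{thm:ex-fp-d} and the enhanced-data topology of $\mathcal{X}^{\beta,\gamma'}_\infty$ are exactly what give this uniform control. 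An alternative, seemingly cleaner route via a nonlocal strong maximum principle at a hypothetical zero of $\rho_t$ would require pointwise evaluation of $-\nabla\cdot(F\rho_t)$ at a minimum, which is not available for distributional $F$, so approximation plus iteration appears to be the robust strategy.
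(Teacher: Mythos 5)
Your plan shares the mild-formula plus Schauder plus time-iteration skeleton with the paper's proof, but the execution of the short-time step has a genuine gap. You propose to argue directly with initial datum $\delta_x$, writing $\rho^m_{t_0}(x,\cdot)=P_{t_0}\delta_x+R^m_{t_0}$ and claiming $\|R^m_{t_0}\|\to 0$ as $t_0\to 0$ uniformly in $m$, in a norm strong enough to compare pointwise with the heat kernel. With Dirac initial data this fails: $P_{t_0}\delta_x$ itself blows up like $t_0^{-d/\alpha}$, the mild solution only lives in a blow-up space $\mathcal{M}^{\gamma}_{T,0}\calC^{\alpha+\beta-1}_{p}$ (so $\|\rho_s\|$ grows like $s^{-\gamma}$ as $s\to 0$), and the Duhamel remainder $\int_0^{t_0}P_{t_0-s}(-\nabla\cdot(F\rho_s))\,ds$ is not small in $L^\infty$ as $t_0\to 0$ — it can be of the same order as the kernel. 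The Besov regularity $\rho_t\in\calC^{\alpha+\beta-1}$ you invoke holds for fixed $t>0$ but does not deliver smallness of the remainder uniformly down to $t_0=0$.

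The paper sidesteps exactly this by a WLOG reduction before the mild-formula estimate: run the equation for a small $s>0$ and replace the singular initial datum by $\rho_s\in\calC^{\alpha+\beta-1}\subset C_b$, $\rho_s\geqslant 0$, then rescale and shift so that $u:=\rho_s\geqslant 1$ on some ball $B(0,\kappa)$. With bounded initial data the remainder is genuinely $O(t^{(\alpha+\beta-1-\epsilon)/\alpha})$ in $L^\infty$ and can be made $<1/8$ for small $t$. The lower bound on $P_tu$ is then obtained not by claiming strict positivity of the stable heat kernel $p_t$ (which you assert via \cref{ass} but do not prove; \cref{ass} only guarantees that $L$ reaches every open set, not that the density is everywhere positive), but by the elementary scaling/symmetry computation $p_t\ast u(y)\geqslant\p(|y+t^{1/\alpha}L_1|\leqslant\kappa)\to 1/2$ uniformly over a ball $B(0,\kappa+t\rho)$ that \emph{grows} with $t$. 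This growing-ball estimate is what makes the iteration cover a compact $K$ in finitely many steps by pure linearity of the equation; it replaces your Chapman--Kolmogorov propagation and, importantly, never needs a pointwise lower bound on the heat kernel far from the starting point. If you want to keep your kernel-based route, you would need to (i) perform the same reduction from $\delta_x$ to a bounded nonnegative initial datum via a short initial time increment, and (ii) replace the appeal to ``strict positivity of $p_{t_0}$'' by a quantitative lower bound on a neighbourhood that is stable under the perturbation — which is precisely what the paper's scaling argument supplies.
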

\begin{proof}
In the Brownian case, $\alpha=2$, this follows from the proof of \cite[Theorem 5.1]{cfg}. We give the adjusted argument for $\alpha\in (1,2]$.\\ Let $p_{t}$ be the $\alpha$-stable density of $L_{t}$. Without loss of generality, we assume $\mu=u\in C_b(\R^{d})$ with $u\geqslant 0$ and with $u\geqslant 1$ on a ball $B(0,\kappa)$, $\kappa>0$. Otherwise, we may consider $\rho_{s}$ for $s>0$ as an initial condition, for which we know that $\rho_{s}\in\calC^{\alpha+\beta-1}\subset C_b(\R^{d})$ and that $\rho_{s}\geqslant 0$ by \cref{cor:density}. Then by continuity there exists a ball $B(x,\kappa)$ where $\rho_{s}>0$. Dividing by the lower bound and shifting $\rho_{s}$, we can assume that $\rho_{s}>1$ on $B(0,\kappa)$.\\ Let now $\kappa>0$ and $u\in C_{b}(\R^{d})$ with $u\geqslant 0$ and with $u\geqslant 1$ on the ball $B(0,\kappa)$. Then by the scaling property, we have that
\begin{align*}
p_{t}\ast u(y)\geqslant \p(\abs{y+t^{1/\alpha}L_{1}}\leqslant\kappa)=\p(L_{1}\in B(yt^{-1/\alpha},\kappa t^{-1/\alpha}))
\end{align*}
Let $y=(\kappa+t\rho)z$ for $z\in B(0,1)$, $\rho\geqslant 0$, so that $y\in B(0,\kappa+t\rho)$. Then we obtain 
\begin{align*}
\MoveEqLeft
\p(L_{1}\in B(yt^{-1/\alpha},\kappa t^{-1/\alpha}))\\&=\p(L_{1}\in B(z(\kappa t^{-1/\alpha}+\rho t^{1-1/\alpha}),\kappa t^{-1/\alpha}))
\\&\geqslant\p(2 z\cdot L_{1}\geqslant \abs{L_{1}}^{2} (\kappa t^{-1/\alpha}+\rho t^{1-1/\alpha})^{-1} +(\abs{z}^{2}-1)[\kappa t^{-1/\alpha}
+\rho t^{1-1/\alpha}])
\\&\geqslant\inf_{\abs{z}\leqslant 1} \p(2 z\cdot L_{1}\geqslant \abs{L_{1}}^{2} (\kappa t^{-1/\alpha}+\rho t^{1-1/\alpha})^{-1} + (\abs{z}^{2}-1)[\kappa t^{-1/\alpha}+\rho t^{1-1/\alpha}])
\allowdisplaybreaks
\\&=\inf_{\abs{z}= 1} \p(2 z\cdot L_{1}\geqslant \abs{L_{1}}^{2} (\kappa t^{-1/\alpha}+\rho t^{1-1/\alpha})^{-1} )
\\&\to \inf_{\abs{z}= 1}\p( z\cdot L_{1}\geqslant 0) =\frac{1}{2}
\end{align*} for $t\to 0$. Here we used that $\alpha>1$ and that by symmetry of $L$, for any $z\in B(0,1)$ with $\abs{z}=1$, $\p(z\cdot L_{1}\geqslant 0)=\p(z\cdot L_{1}\leqslant 0)=1-\p(z\cdot L_{1}\geqslant 0)$, because $\p(z\cdot L_{1}=0)=\p(L_{1}=0)=0$.\\ 
Thus, we conclude, that there exists $t_{\rho}>0$, such that for all $t\in [0,t_{\rho}]$ and all $y\in B(0,\kappa+t\rho)$, $p_{t}\ast u(y)\geqslant \frac{1}{4}$.\\
Moreover, we have 
\begin{align*}
\rho_{t}=P_{t}u+\int_{0}^{t}P_{t-s}(-\nabla\cdot (F\rho_{s}))ds
\end{align*} with $P_{t}u=p_{t}\ast u$ and
\begin{align*}
\norm[\bigg]{\int_{0}^{t}P_{t-s}(-\nabla\cdot (F\rho_{s}))ds}_{L^{\infty}}\leqslant Ct^{(\alpha+\beta-1-\epsilon)/\alpha}
\end{align*} for $\epsilon\in (0,\alpha+\beta-1)$ by the semigroup estimates, \cite[Lemma 2.5]{kp-sk}, with $\alpha+\beta-1>0$. Hence, for small enough $t$, we can achieve
\begin{align*}
\norm[\bigg]{\int_{0}^{t}P_{t-s}(-\nabla\cdot (F\rho_{s}))ds}_{L^{\infty}}< \frac{1}{8}.
\end{align*} 
Together with the lower bound for $p_{t}\ast u$, we obtain that there exists $t_{\rho}>0$, such that for all $t\in [0,t_{\rho}]$ and all $y\in B(0,\kappa+t\rho)$, it holds that
\begin{align*}
\rho_{t}(y)\geqslant \frac{1}{8}.
\end{align*}
Using linearity of the equation, we can repeat that argument on $[t_{\rho},2t_{\rho}]$ etc. Because $K$ is compact, finitely many steps suffice (for large enough $t$, the ball $B(0,\kappa+t\rho)$ will cover $K$) to conclude that for all $T>0$ there exists $c>0$ such that for all $y\in K$ and all $t\in [0,T]$,
\begin{equation*}
\rho_{t}(y)\geqslant c>0.
\qedhere
\end{equation*}
\end{proof}
\end{section}
\begin{section}{Singular resolvent equation}\label{sec:res-eq}
In this and all subsequent sections of this paper, we write $(P_{t})$, respectively $(T_{t})$, for the semigroups acting on the periodic Besov spaces $\calC^{\theta}_{p}(\mathbb{T}^{d})$, $p=2, \infty$, omitting the supercript $\mathbb{T}^{d}$ that we introduced earlier.\\
We solve the resolvent equation in \cref{thm:res-eq} for the singular operator $\mathfrak{L}$ and for singular paracontrolled right-hand sides $G=G^{\sharp}+G^{\prime}\para F$, $G^{\sharp}\in\calC^{0}_{2}(\mathbb{T}^{d})$, $G^{\prime}\in(\calC^{\alpha+\beta-1}_{2}(\mathbb{T}^{d}))^d$, that is
\begin{align*}
(\lambda-\mathfrak{L})g=G,
\end{align*} 
obtaining a solution $g\in\calC^{\alpha+\beta}_{2}(\mathbb{T}^{d})$.\\ 
The next Lemma proves semigroup and commutator estimates for the $I_{\lambda}$-operator.
\begin{lemma}\label{lem:lambda-comm}
Let $\lambda\geqslant 1$, $\delta\in\R$ and $v\in\calC^{\delta}_{2}$. Let again $I_{\lambda}(v):=\int_{0}^{\infty}e^{-\lambda t}P_{t}v dt$. 
Then, $I_{\lambda}(v)$ is well-defined in $\calC^{\beta+\vartheta}_{2}(\mathbb{T}^{d})$ for $\vartheta\in [0,\alpha]$ and the following estimate holds true
\begin{align}\label{eq:a}
\norm{I_{\lambda}(v)}_{\calC^{\delta+\vartheta}_{2}(\mathbb{T}^{d})}\lesssim \lambda^{-(1-\vartheta/\alpha)}\norm{v}_{\calC^{\delta}_{2}(\mathbb{T}^{d})}.
\end{align}
Furthermore, for $v\in\calC^{\sigma}_{2}(\mathbb{T}^{d})$, $\sigma<1$, $u\in\calC^{\beta}(\mathbb{T}^{d})$, $\beta\in\R$, and $\vartheta\in [0,\alpha]$, the following commutator estimate holds true:
\begin{align}\label{eq:b}
\norm{C_{\lambda}(v,u)}_{\calC^{\sigma+\beta+\vartheta}_{2}(\mathbb{T}^{d})}&:=\norm{I_{\lambda}(v\para u)-v\para I_{\lambda}(u)}_{\calC^{\sigma+\beta+\vartheta}_{2}(\mathbb{T}^{d})}\nonumber\\&\lesssim \lambda^{-(1-\vartheta/\alpha)}\norm{v}_{\calC^{\sigma}_{2}(\mathbb{T}^{d})}\norm{u}_{\calC^{\beta}(\mathbb{T}^{d})}.
\end{align}
\begin{proof}
The proof of \eqref{eq:a} follows from the semigroup estimates, \cref{lem:periodic-semi-est}. Indeed, we have 
\begin{align*}
\norm{I_{\lambda}(v)}_{\calC^{\delta+\vartheta}_{2}(\mathbb{T}^{d})}&\leqslant \int_{0}^{\infty} e^{-\lambda t}\norm{P_{t}v}_{\calC^{\delta+\vartheta}_{2}(\mathbb{T}^{d})}dt\\&\lesssim \norm{v}_{\calC^{\vartheta}_{2}(\mathbb{T}^{d})}\int_{0}^{\infty}e^{-\lambda t}[t^{-\vartheta/\alpha}\vee 1]dt\\&= \norm{v}_{\calC^{\vartheta}_{2}(\mathbb{T}^{d})}\paren[\bigg]{\lambda^{-(1-\vartheta/\alpha)}\int_{0}^{1}e^{-t}t^{-\vartheta/\alpha}dt+\lambda^{-1}\int_{1}^{\infty}e^{-t}dt}\\&\lesssim  \lambda^{-(1-\vartheta/\alpha)}\norm{v}_{\calC^{\vartheta}_{2}(\mathbb{T}^{d})},
\end{align*} since $\lambda\geqslant 1$ and where we use that $\int_{0}^{1}e^{-t}t^{-\vartheta/\alpha}dt\leqslant \int_{0}^{1}t^{-\vartheta/\alpha}dt<\infty$ if $\vartheta\in [0,\alpha)$ and $\int_{1}^{\infty} e^{-t}dt<\infty$. The bound in the case $\vartheta=\alpha$ follows with 
\begin{align*}
\norm{I_{\lambda}(v)}_{\calC^{\delta+\alpha}_{p}(\mathbb{T}^{d})}&\leqslant \norm[\bigg]{\int_{0}^{1} e^{-\lambda t}P_{t}vdt}_{\calC^{\delta+\alpha}_{2}(\mathbb{T}^{d})}+\int_{1}^{\infty}e^{-\lambda t}\norm{P_{t}v}_{\calC^{\delta+\alpha}_{p}(\mathbb{T}^{d})}dt
\\&\lesssim\norm{v}_{\calC^{\delta}_{p}(\mathbb{T}^{d})},
\end{align*} using \cite[Lemma 3.1]{kp-sk} to estimate the integral over $[0,1]$ (with, in the notation of that lemma, $T=1$, $\gamma=0$, $\sigma=\delta$, $\varsigma=\alpha$, $f_{0,t}=e^{-\lambda t}P_{t}v$).\\
The commutator \eqref{eq:b} is proven analogously using \cite[Lemma 2.7]{kp-sk}. 
\end{proof}

\end{lemma}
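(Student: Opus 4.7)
The plan is to prove \eqref{eq:a} by decomposing the integral $I_\lambda(v)=\int_0^\infty e^{-\lambda t}P_t v\,dt$ at $t=1$ and applying the periodic semigroup estimate from \cref{lem:periodic-semi-est}, namely $\|P_t v\|_{\calC^{\delta+\vartheta}_2(\mathbb{T}^d)}\lesssim (t^{-\vartheta/\alpha}\vee 1)\|v\|_{\calC^{\delta}_2(\mathbb{T}^d)}$ for $\vartheta\in[0,\alpha]$. On $[0,1]$ I would use the short-time bound $t^{-\vartheta/\alpha}$ and compute $\int_0^1 e^{-\lambda t}t^{-\vartheta/\alpha}\,dt$ via the change of variables $s=\lambda t$, which for $\vartheta<\alpha$ yields $\lambda^{-(1-\vartheta/\alpha)}\int_0^\lambda e^{-s}s^{-\vartheta/\alpha}\,ds\leqslant \lambda^{-(1-\vartheta/\alpha)}\,\Gamma(1-\vartheta/\alpha)$. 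On $[1,\infty)$ the semigroup no longer smooths and $\int_1^\infty e^{-\lambda t}\,dt=\lambda^{-1}e^{-\lambda}\leqslant \lambda^{-(1-\vartheta/\alpha)}$ for $\lambda\geqslant 1$. Combining these gives \eqref{eq:a}.

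The endpoint case $\vartheta=\alpha$ is the first mild obstacle, since then the gamma integral diverges at zero. I would handle it by splitting $[0,1]$ and $[1,\infty)$ separately: on $[1,\infty)$ apply the estimate as above, while on $[0,1]$ I would invoke the mapping property of $I$ acting on blow-up spaces, a direct consequence of the Schauder estimate \cite[Lemma 3.1]{kp-sk} applied with $\gamma=0$, $\sigma=\delta$, $\varsigma=\alpha$ and integrand $f_t=e^{-\lambda t}P_t v$, which directly yields the required bound by $\|v\|_{\calC^{\delta}_2}$.

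For the commutator estimate \eqref{eq:b}, I would write
\begin{align*}
C_\lambda(v,u)=\int_0^\infty e^{-\lambda t}\bigl(P_t(v\para u)-v\para P_t u\bigr)\,dt
\end{align*}
and use the semigroup-paraproduct commutator bound $\|P_t(v\para u)-v\para P_t u\|_{\calC^{\sigma+\beta+\vartheta}_2}\lesssim t^{-\vartheta/\alpha}\|v\|_{\calC^{\sigma}_2}\|u\|_{\calC^{\beta}}$ valid for $\sigma<1$ (this is the type of estimate proved in \cite[Lemma 2.7]{kp-sk}, the point being that the commutator gains one full degree of regularity compared to $P_t(v\para u)$ alone). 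Once this pointwise-in-$t$ estimate is in hand, the integration over $t$ proceeds exactly as in the proof of \eqref{eq:a}, producing the factor $\lambda^{-(1-\vartheta/\alpha)}$, with the endpoint $\vartheta=\alpha$ again treated by splitting at $t=1$ and applying the blow-up Schauder bound for the $[0,1]$ piece.

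The main subtlety is simply to verify that the hypothesis $\sigma<1$ of the commutator lemma is consistent with the parabolic smoothing rate $t^{-\vartheta/\alpha}$, and to keep track of the fact that $\calC^{\beta}(\mathbb{T}^d)$ (not $\calC^{\beta}_2$) appears on $u$ because the paraproduct $v\para u$ with $v\in\calC^{\sigma}_2$ requires $u$ to be essentially bounded in Littlewood-Paley blocks; this is why the statement separates the integrability indices on $u$ and $v$. Everything else reduces to the integration identity $\int_0^\infty e^{-\lambda t}\min(t^{-\vartheta/\alpha},1)\,dt\lesssim\lambda^{-(1-\vartheta/\alpha)}$ for $\lambda\geqslant 1$.
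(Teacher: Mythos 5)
Your proof follows essentially the same route as the paper's: both split the integral at $t=1$, apply the periodic semigroup estimate from \cref{lem:periodic-semi-est}, use the change of variables $s=\lambda t$ to extract the factor $\lambda^{-(1-\vartheta/\alpha)}$, treat the endpoint $\vartheta=\alpha$ by invoking \cite[Lemma 3.1]{kp-sk} on the $[0,1]$ piece, and handle the commutator \eqref{eq:b} by integrating the pointwise-in-$t$ commutator bound from \cite[Lemma 2.7]{kp-sk} in the same way. The argument is correct and complete.
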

\begin{theorem}\label{thm:res-eq}
Let $\alpha\in (1,2]$ and $F\in\calC^{\beta}(\mathbb{T}^{d})$ for $\beta\in (\frac{1-\alpha}{2},0)$ or $F\in\mathcal{X}^{\beta,\gamma}_{\infty}(\mathbb{T}^{d})$ for $\beta\in (\frac{2-2\alpha}{3},\frac{1-\alpha}{2}]$ and $\gamma\in (\frac{2\beta+2\alpha-1}{\alpha},1)$.\\ 
Then, for $\lambda>0$ large enough, the resolvent equation
\begin{align}
R_{\lambda}g=(\lambda-\mathfrak{L})g=G
\end{align} with right-hand side $G=G^{\sharp}+G^{\prime}\para F$, $G^{\sharp}\in \calC^{0}_{2}(\mathbb{T}^{d})$, $G^{\prime}\in(\calC^{\alpha+\beta-1}_{2}(\mathbb{T}^{d}))^{d}$, possesses a unique solution $g\in\calC^{\theta}_{2}(\mathbb{T}^{d})$, $\theta\in ((2-\beta)/2,\beta+\alpha)$.\\ If $\beta\in (\frac{2-2\alpha}{3},\frac{1-\alpha}{2}]$, the solution is paracontrolled, that is,
\begin{align}
g=g^{\sharp}+(G^{\prime}+\nabla g)\para I_{\lambda}(F),\quad g^{\sharp}\in\calC^{2\theta-1}_{2}(\mathbb{T}^{d}).
\end{align}  
\end{theorem}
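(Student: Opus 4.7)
The plan is to recast the resolvent equation in mild form $g = I_\lambda(F\cdot\nabla g + G)$, where $I_\lambda(v) := \int_0^\infty e^{-\lambda t} P_t v\, dt$ is the resolvent of $(-\La)$, and then to run a Banach fixed-point argument on a paracontrolled solution space. The small factor $\lambda^{-(1-\vartheta/\alpha)}$ supplied by the semigroup and commutator bounds of \cref{lem:lambda-comm} provides the contractivity once $\lambda$ is taken large enough.

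To motivate the ansatz I decompose $F\cdot\nabla g = \nabla g \para F + \nabla g \arap F + \nabla g \reso F$ and $G = G^\sharp + G' \para F$, and pull the two paraproducts through $I_\lambda$ by the commutator identity $I_\lambda(u \para F) = u \para I_\lambda(F) + C_\lambda(u,F)$ from \eqref{eq:b}. This exposes the paracontrolled structure
\begin{align*}
g &= (G' + \nabla g) \para I_\lambda(F) + g^\sharp,\\
g^\sharp &= I_\lambda(G^\sharp) + C_\lambda(G', F) + C_\lambda(\nabla g, F) + I_\lambda(\nabla g \arap F) + I_\lambda(\nabla g \reso F),
\end{align*}
on which the fixed-point is carried out in the Banach space $\mathscr{D}_\lambda := \{g\in \calC^\theta_2 : g^\sharp \in \calC^{2\theta-1}_2\}$ endowed with the natural norm. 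The mapping $\Phi : g \mapsto I_\lambda(F\cdot\nabla g + G)$ preserves this paracontrolled structure by construction, and the $\arap$ piece and $I_\lambda(G^\sharp)$ are estimated directly by \eqref{eq:paraproduct-estimates} and \cref{lem:lambda-comm}.

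The delicate step is making sense of $\nabla g \reso F$ in the rough regime $\beta \in (\tfrac{2-2\alpha}{3}, \tfrac{1-\alpha}{2}]$, where $(\theta-1)+\beta \leq 0$. Differentiating the paracontrolled ansatz via $\nabla(u\para v) = \nabla u \para v + u \para \nabla v$, I would expand
\begin{equation*}
\nabla g \reso F = \nabla g^\sharp \reso F + \bigl(\nabla(G'+\nabla g) \para I_\lambda(F)\bigr)\reso F + \bigl((G'+\nabla g) \para \nabla I_\lambda(F)\bigr)\reso F,
\end{equation*}
and apply Bony's commutator $C_1(u,v,w) := (u\para v)\reso w - u (v\reso w)$ to reduce the last summand to $(G'+\nabla g)\cdot(\nabla I_\lambda(F) \reso F) + C_1(G'+\nabla g,\nabla I_\lambda(F),F)$. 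The crucial resonant product $\nabla I_\lambda(F)\reso F = \int_0^\infty e^{-\lambda t} P_t(\nabla F)\reso F\, dt$ is precisely what the enhancement assumption provides: since $(F,F_2)\in\calX^{\beta,\gamma}_\infty$ with $\gamma < 1$, the integrand lies in $\mathcal{M}^\gamma_{\infty,0}\calC^{2\beta+\alpha-1}$ and the $t$-integral converges in $\calC^{2\beta+\alpha-1}$. A direct regularity count shows that $\nabla g^\sharp \reso F$ is defined thanks to $\theta > (2-\beta)/2$, the second summand is defined thanks to $\beta > (2-2\alpha)/3$, and the $C_1$ remainder lives in $\calC^{\theta + \alpha + 2\beta - 2}$, again a positive exponent under the same lower bound on $\beta$.

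Contractivity follows because every occurrence of $I_\lambda$ in the estimates for $g^\sharp$ and for the paracontrolled derivative produces a factor $\lambda^{-\eta}$ with some $\eta > 0$ (choose $\vartheta$ in \cref{lem:lambda-comm} strictly less than $\alpha$), so $\Phi$ is Lipschitz on $\mathscr{D}_\lambda$ with constant $\lesssim \lambda^{-\eta}(1+\norm{F}_{\calX^{\beta,\gamma}_\infty}^2)$ and becomes a strict contraction for $\lambda$ large. Linearity of $\lambda - \mathfrak{L}$ turns the difference of two solutions into a fixed point with zero data, so the same contraction delivers uniqueness in $\calC^\theta_2$. The Young regime $\beta \in (\tfrac{1-\alpha}{2},0)$ is strictly easier, as $\nabla g \reso F$ is then defined directly by \eqref{eq:paraproduct-estimates} and no enhancement or $C_1$ expansion is needed. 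The main obstacle is the bookkeeping in the previous paragraph: tracking that every resonant, commutator, and paraproduct piece of $g^\sharp$ indeed lands in $\calC^{2\theta-1}_2$ and depends continuously on the enhanced datum, which is exactly what forces the threshold $\beta > (2-2\alpha)/3$.
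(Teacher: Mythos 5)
Your strategy (mild $I_\lambda$-formulation, paracontrolled ansatz, enhancement for $\nabla I_\lambda(F)\reso F$, Banach fixed point) is the same as the paper's, and your regularity bookkeeping for the pieces of $\nabla g\reso F$ and the roles of $\theta>(2-\beta)/2$ and $\beta>(2-2\alpha)/3$ is essentially right. However, the fixed-point set-up has a genuine error in the rough regime. You work on a one-variable space $\mathscr{D}_\lambda=\{g\in\calC^\theta_2:g^\sharp\in\calC^{2\theta-1}_2\}$ where $g^\sharp:=g-(G'+\nabla g)\para I_\lambda(F)$, so the paracontrolled derivative is hardwired to $G'+\nabla g$. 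Then $\Phi\colon g\mapsto I_\lambda(G+F\cdot\nabla g)$ does \emph{not} preserve this space. Writing $\Phi(g)=(G'+\nabla g)\para I_\lambda(F)+R(g)$ with $R(g)\in\calC^{2\theta-1}_2$ the good remainder (commutators, resonant pieces, $I_\lambda(G^\sharp)$), one gets
\[
\Phi(g)^\sharp=\Phi(g)-\bigl(G'+\nabla\Phi(g)\bigr)\para I_\lambda(F)=R(g)+\bigl(\nabla g-\nabla\Phi(g)\bigr)\para I_\lambda(F),
\]
and the extra paraproduct lies only in $\calC^{\alpha+\beta}_2$. In the rough regime $\beta\leq(1-\alpha)/2$ the constraint $\theta>(2-\beta)/2\geq(\alpha+\beta+1)/2$ forces $2\theta-1>\alpha+\beta$, so this term is strictly rougher than $\calC^{2\theta-1}_2$: $\Phi(g)\notin\mathscr{D}_\lambda$ in general, and a fortiori your claimed Lipschitz bound $\lesssim\lambda^{-\eta}$ cannot hold (the term also carries no $\lambda$-gain, since $\|I_\lambda(F)\|_{\calC^{\alpha+\beta}}\lesssim\|F\|_{\calC^{\beta}}$ without any negative power of $\lambda$). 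Your assertion that $\Phi$ "preserves this paracontrolled structure by construction" and that every occurrence of $I_\lambda$ produces a factor $\lambda^{-\eta}$ "for the paracontrolled derivative" is therefore false for a single application of $\Phi$.

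The paper avoids this by promoting the derivative to an independent coordinate: the solution space is $\mathcal{D}^{\theta}_2=\{(g,g'):g^\sharp:=g-g'\para I_\lambda(F)\in\calC^{2\theta-1}_2\}$ and the map is $\Phi_\lambda(g,g')=(\phi_\lambda(g),\,G'+\nabla g)$. The new $\sharp$-part $\phi_\lambda(g)-(G'+\nabla g)\para I_\lambda(F)=R(g)$ is formed with the \emph{old} derivative $G'+\nabla g$, so the spurious paraproduct never appears and $\Phi_\lambda$ is a genuine self-map of $\mathcal{D}^{\theta}_2$. Since the derivative coordinate $g'\mapsto G'+\nabla g$ undergoes no $\lambda$-contraction under one application, the paper then shows that the iterate $\Phi^2_\lambda=\Phi_\lambda\circ\Phi_\lambda$ (not $\Phi_\lambda$ itself) is a contraction: the second application replaces the derivative by $G'+\nabla\phi_\lambda(g)$, which does carry the small factor. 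You need this two-variable, two-step structure (or an equivalent re-weighted-norm argument) to close the fixed point; as stated, the single-map contraction does not go through.
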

\begin{proof}
Consider the paracontrolled solution space
\begin{align}
\mathcal{D}^{\theta}_{2}:=\{(g,g^{\prime})\in \calC^{\theta}_{2}(\mathbb{T}^{d})\times (\calC^{\theta-1}_{2}(\mathbb{T}^{d}))^{d}\mid g^{\sharp}:=g-g^{\prime}\para I_{\lambda}(F)\in\calC^{2\theta-1}_{2}(\mathbb{T}^{d})\}
\end{align} with norm $\norm{g-h}_{\mathcal{D}^{\theta}_{2}}:=\norm{g-h}_{\calC^{\theta}_{2}(\mathbb{T}^{d})}+\norm{g^{\sharp}-h^{\sharp}}_{\calC^{2\theta-1}_{2}(\mathbb{T}^{d})}+\norm{g^{\prime}-h^{\prime}}_{\calC^{\theta-1}_{2}(\mathbb{T}^{d})}$, which makes it a Banach space.\\
The solution $g$ satisfies
\begin{align*}
g=\int_{0}^{\infty}e^{-\lambda t} P_{t}(G+F\cdot\nabla g)dt,
\end{align*} i.e. it is the fixed point of the map $\calC^{\theta}_{2}(\mathbb{T}^{d})\ni g\mapsto \phi_{\lambda}(g):=\int_{0}^{\infty}e^{-\lambda t} P_{t}(G+F\cdot\nabla g)dt\in \calC^{\theta}_{2}(\mathbb{T}^{d})$, respectively, in the rough case $\beta\in ((2-2\alpha)/3,(1-\alpha)/2]$, of the map 
\begin{align*}
\mathcal{D}^{\theta}_{2}\ni (g,g')\mapsto (\phi_{\lambda}(g),G^{\prime}+\nabla g)=:\Phi_{\lambda}(g,g^{\prime})\in \mathcal{D}^{\theta}_{2}.
\end{align*}
The product is defined as $F\cdot\nabla g:=F\reso\nabla g+F\para\nabla g+F\arap\nabla g$, where for $F\in\mathcal{X}^{\beta,\gamma}_{\infty}(\mathbb{T}^{d})$ and $g\in\mathcal{D}^{\theta}_{2}$,
\begin{align*}
F\reso\nabla g=\sum_{i=1}^{d}F^{i}\reso\partial_{i} g&:=\sum_{i=1}^{d}\Big[F^{i}\reso [\partial_{i} g^{\sharp}+\partial_{i} g^{\prime}\para I_{\lambda}(F)]+g(I_{\lambda}(\partial_{i} F)\reso F^{i})\\&\qquad\qquad+C_{1}(g,I_{\lambda}(\partial_{i} F),F^{i})\Big],
\end{align*} with paraproduct commutator 
\begin{align}\label{eq:para-comm}
C_{1}(g,f,h):=(g\para f)\reso h-g(f\reso h)
\end{align} from \cite[Lemma 2.4]{Gubinelli2015Paracontrolled}. Analogously as before, the product of $F\in\mathcal{X}^{\beta,\gamma}_{\infty}(\mathbb{T}^{d})$ and $g\in\mathcal{D}^{\theta}_{2}$ with $\theta>(2-\beta)/2$ can thus be estimated by 
\begin{align*}
\norm{F\cdot g}_{\calC^{\beta}_{2}(\mathbb{T}^{d})}\lesssim\norm{F}_{\mathcal{X}^{\beta,\gamma}_{\infty}(\mathbb{T}^{d})}(1+\norm{F}_{\mathcal{X}^{\beta,\gamma}_{\infty}(\mathbb{T}^{d})})\norm{g}_{\mathcal{D}^{\theta}_{2}}.
\end{align*} 
The unique fixed point is obtained by the Banach fixed point theorem, where, in the Young case the map $\phi$, and in the rough case, $\Phi^{2}_{\lambda}=\Phi_{\lambda}\circ\Phi_{\lambda}$ are contractions for large enough $\lambda>0$. This can be seen by estimating
\begin{align*}
\norm{\phi_{\lambda}(g)-\phi_{\lambda}(h)}_{\calC^{\theta}_{2}(\mathbb{T}^{d})}&\lesssim\lambda^{(\theta-\beta-\alpha)/\alpha}\norm{F\cdot\nabla (g-h)}_{\calC^{\beta}_{2}(\mathbb{T}^{d})}\\&\lesssim\lambda^{(\theta-\beta-\alpha)/\alpha}\norm{F}_{\mathcal{X}^{\beta,\gamma}_{\infty}(\mathbb{T}^{d})}(1+\norm{F}_{\mathcal{X}^{\beta,\gamma}_{\infty}(\mathbb{T}^{d})})\norm{g-h}_{\mathcal{D}^{\theta}_{2}}
\end{align*} using \eqref{eq:a} and the estimate for the product. Thus a contraction is obtained by choosing $\lambda$ large enough, such that $\lambda^{(\theta-\beta-\alpha)/\alpha}\norm{F}_{\mathcal{X}^{\beta,\gamma}_{\infty}(\mathbb{T}^{d})}(1+\norm{F}_{\mathcal{X}^{\beta,\gamma}_{\infty}(\mathbb{T}^{d})})<1$, using $\theta<\alpha+\beta$. To check that indeed $\Phi_{\lambda}(g,g^{\prime})\in \mathcal{D}^{\theta}_{2}$, we note that 
\begin{align*}
\Phi_{\lambda}(g,g^{\prime})^{\sharp}&=\phi_{\lambda}(g)-[G^{\prime}e_{i}+\nabla g]\para I_{\lambda}(F)\\&=I_{\lambda}(G^{\sharp}+F\reso g+F\para\nabla g)+C_{\lambda}(G^{\prime}e_{i}+\nabla g, F)
\end{align*} for the commutator $C_{\lambda}$ from \eqref{eq:b}. Notice that, if $\beta<(1-\alpha)/2$, for $G^{\sharp}\in\calC^{0}_{2}(\mathbb{T}^{d})$, $I_{\lambda}(G^{\sharp})\in\calC^{\alpha}_{2}(\mathbb{T}^{d})\subset\calC^{2\theta-1}_{2}(\mathbb{T}^{d}) $ as $\theta>(1+\alpha)/2$. Hence, together with \cref{lem:lambda-comm}, it follows that $\Phi_{\lambda}(g,g^{\prime})^{\sharp}\in\calC^{2\theta-1}(\mathbb{T}^{d})$. Thereby we also get the small factor of $\lambda^{(\theta-\alpha-\beta)/\alpha}$ in the estimate.  To see that $\Phi^{2}_{\lambda}=\Phi_{\lambda}\circ\Phi_{\lambda}$ is a contraction, we furthermore check  
\begin{align*}
\MoveEqLeft
\norm{\Phi_{\lambda}(\Phi_{\lambda}(g,g'))'-\Phi_{\lambda}(\Phi_{\lambda}(h,h'))'}_{\calC^{\theta-1}_{2}(\mathbb{T}^{d})}\\&= \norm{\nabla\phi_{\lambda}(g)-\nabla\phi_{\lambda}(h)}_{\calC^{\theta-1}_{2}(\mathbb{T}^{d})}\\&\lesssim \norm{\phi_{\lambda}(g)-\phi_{\lambda}(h)}_{\calC^{\theta}_{2}(\mathbb{T}^{d})}\\&\lesssim\lambda^{(\theta-\beta-\alpha)/\alpha}\norm{F}_{\mathcal{X}^{\beta,\gamma}_{\infty}(\mathbb{T}^{d})}(1+\norm{F}_{\mathcal{X}^{\beta,\gamma}_{\infty}(\mathbb{T}^{d})})\norm{g-h}_{\mathcal{D}^{\theta}_{2}},
\end{align*} by the above estimate.
\end{proof}

\end{section}
\begin{section}{Existence of an invariant measure and spectral gap estimates}\label{sec:inv-m}
In this section, we prove with \cref{thm:inv-m} existence and uniqueness of an invariant, ergodic probability measure for the process $X^{\mathbb{T}^{d}}$ with state space $\mathbb{T}^{d}$, in the following for short denoted by $X$. The theorem moreover shows that $X$ is exponentially ergodic, in the sense that pointwise spectral gap estimates for its semigroup $(T_{t})$ hold. Furthermore, we characterize the domain of $\mathfrak{L}$ in $L^{2}(\pi)$ in \cref{thm:generator} and define the mean of $F\in\mathcal{X}^{\beta,\gamma}_{\infty}$ with respect to the invariant measure $\pi$ in \cref{thm:def-F-pi-int}.\\
Existence and uniqueness of the invariant measure together with the pointwise spectral gap estimates on the semigroup are obtained by an application of Doeblin's theorem (see e.g.~\cite[Theorem 3.1, Chapter 3, Section 3, p.~365]{Bensoussan1978}), that we state here in the continuous time setting.
\begin{lemma}[Doeblin's theorem]\label{thm:doeblin}
Let $(X_{t})_{t\geqslant 0}$ be a time-homogeneous Markov process with state space $(S,\Sigma)$ for a compact metric space $S$ and its Borel-sigma-field $\Sigma$. Let $(T_{t})_{t\geqslant 0}$ be the associated semigroup, $T_{t}f(x):=\E[f(X_{t})\mid X_{0}=x]$ for $x\in S$ and $f:S\to\R$ bounded measurable. 
Assume further, that there exists a probability measure $\mu$ on $(S,\Sigma)$ and, for any $t>0$, a continuous function $\rho_{t}:S\times S\to\R^{+}$, such that $T_{t}\mathbf{1}_{E}(x)=\int_{E}\rho_{t}(x,y)\mu(dy)$, $E\in\Sigma$. Assume moreover that, for any $t>0$, there exists an open ball $U_{0}$, such that $\mu(U_{0})>0$ and $\rho_{t}(x,y)>0$ for all $x\in S$ and $y\in U_{0}$.\\ Then, there exists a unique invariant probability measure $\pi$ (i.e. $\int_{S}T_{t}\mathbf{1}_{E}(x)\pi(dx)=\pi(E)$ for all $E\in\Sigma$ and all $t\geqslant 0$) on $(S,\Sigma)$ with the property that there exist constants $K,\nu>0$, such that for all $t\geqslant 0$, $x\in S$ and $\phi:S\to\R$ bounded measurable,  
\begin{align}\label{eq:sg}
\abs[\bigg]{T_{t}\phi(x)-\int_{S}\phi(y)\pi(dy)}\leqslant K\abs{\phi}e^{-\nu t}
\end{align} where $\abs{\phi}:=\sup_{x\in S}\abs{\phi(x)}$.
\end{lemma}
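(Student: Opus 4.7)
I would follow the classical Doeblin coupling argument adapted to continuous time. The starting point is to turn the pointwise positivity of the transition density $\rho_{t_{0}}$ into a quantitative Doeblin minorization for a fixed $t_{0}>0$. Picking the ball $U_{0}$ provided by the hypothesis at time $t_{0}$ and a smaller open set $V$ with $\overline{V}\subset U_{0}$ and $\mu(V)>0$, the joint continuity of $\rho_{t_{0}}$ on the compact set $S\times\overline{V}$ together with strict positivity yields
\begin{equation*}
c:=\inf_{x\in S,\,y\in\overline{V}}\rho_{t_{0}}(x,y)>0.
\end{equation*}
This gives the minorization
\begin{equation*}
T_{t_{0}}\mathbf{1}_{E}(x)=\int_{E}\rho_{t_{0}}(x,y)\mu(dy)\geq c\,\mu(E\cap V)=\eta\,\tilde{\nu}(E),\qquad x\in S,\ E\in\Sigma,
\end{equation*}
with $\eta:=\min(c\,\mu(V),1)\in(0,1]$ and the probability measure $\tilde{\nu}(E):=\mu(E\cap V)/\mu(V)$.

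The second step is to convert this minorization into a total-variation contraction for the discrete-time kernel $T_{t_{0}}$. Writing $T_{t_{0}}(x,\cdot)=\eta\,\tilde{\nu}(\cdot)+(1-\eta)Q(x,\cdot)$ with $Q$ a sub-Markov kernel, the standard coupling lemma yields
\begin{equation*}
\norm{\lambda_{1}T_{t_{0}}-\lambda_{2}T_{t_{0}}}_{\mathrm{TV}}\leq (1-\eta)\norm{\lambda_{1}-\lambda_{2}}_{\mathrm{TV}}
\end{equation*}
for any two probability measures $\lambda_{1},\lambda_{2}$ on $S$, and by iteration $\norm{\lambda_{1}T_{nt_{0}}-\lambda_{2}T_{nt_{0}}}_{\mathrm{TV}}\leq(1-\eta)^{n}$. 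Since the space of probability measures on $(S,\Sigma)$ is complete with respect to the total-variation distance, Banach's fixed point theorem applied to $T_{t_{0}}$ produces a unique invariant probability measure $\pi$.

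Third, I would promote invariance to the full continuous-time semigroup: for every $s>0$ the semigroup property gives $T_{t_{0}}(T_{s}\pi)=T_{s}(T_{t_{0}}\pi)=T_{s}\pi$, so by uniqueness $T_{s}\pi=\pi$. The exponential estimate \eqref{eq:sg} then follows by writing an arbitrary $t\geq 0$ as $t=nt_{0}+r$ with $r\in[0,t_{0})$ and applying the $n$-fold contraction to $\delta_{x}T_{r}$ and $\pi$ (which is also invariant under $T_{r}$). This gives $\norm{\delta_{x}T_{t}-\pi}_{\mathrm{TV}}\leq(1-\eta)^{n}$, and hence
\begin{equation*}
\abs[\Big]{T_{t}\phi(x)-\int_{S}\phi(y)\pi(dy)}\leq \abs{\phi}(1-\eta)^{n}\leq K\abs{\phi}e^{-\nu t}
\end{equation*}
with $\nu:=-\log(1-\eta)/t_{0}$ and $K:=1/(1-\eta)$.

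The only delicate step is the very first one: upgrading the pointwise positivity of $\rho_{t_{0}}$ to a uniform lower bound on $S\times\overline{V}$. This is where the compactness of $S$ and the joint continuity of $\rho_{t_{0}}$ are both used essentially. The remainder is standard coupling bookkeeping, with the continuous-time aspect handled by the semigroup identity and the decomposition $t=nt_{0}+r$.
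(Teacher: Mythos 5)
Your proof is correct and follows essentially the same strategy as the paper: reduce to a discrete-time Doeblin argument at a fixed time step $t_{0}$, obtain the invariant measure and exponential convergence along the lattice $n t_{0}$, and then upgrade to all $t\geq 0$ by the decomposition $t=nt_{0}+r$ together with the semigroup property and the $L^{\infty}$-contraction of $T_{r}$. The only difference is presentational: the paper delegates the discrete-time part to Bensoussan--Lions--Papanicolaou and constructs $\pi$ as the limit of $T_{n}\mathbf{1}_{E}(x)$, while you spell out the Doeblin minorization (using compactness of $S\times\overline{V}$ and continuity of $\rho_{t_{0}}$ to obtain a uniform lower bound), the total-variation contraction via coupling, and the Banach fixed-point argument explicitly; these are equivalent formulations of the same classical argument.
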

\begin{proof}
For discrete time Markov chains, the result follows immediately from \cite[Theorem 3.1, p. 365]{Bensoussan1978}. For continuous time Markov processes, the proof is similar. Indeed, in the same manner one proves that if $\pi$ is such that \eqref{eq:sg} holds, then $\pi$ is unique and $\pi$ is invariant for $(T_{t})$. Furthermore, using the assumptions on the density $\rho$ and the same proof steps as in \cite[Theorem 3.1, p. 365]{Bensoussan1978}, one obtains existence of an invariant measure $\pi$ with $\pi(E)$ given as the limit of $(T_{n}\mathbf{1}_{E}(x))_{n}$ for any $x\in S$ and with \eqref{eq:sg} for $t$ replaced by $n\in\N$. Then, using the semigroup property, we also obtain \eqref{eq:sg} for any $t\geqslant 0$, with a possibly different constant $K>0$. Indeed, let $t>0$ and $n=\lfloor t\rfloor$. Then for bounded measurable $\phi$ with $\int_{S}\phi d\pi=0$, we obtain
\begin{align*}
\abs{T_{t}\phi(x)}=\abs{T_{n}T_{t-n}\phi(x)}\leqslant K \abs{T_{t-n}\phi}e^{-\nu n}\leqslant K\abs{\phi}e^{-\nu n}=K e^{\nu (t-n)}\abs{\phi}e^{-\nu t}\leqslant Ke^{\nu}\abs{\phi} e^{-\nu t}.
\end{align*} 
Now, by changing the constant $K$, we obtain \eqref{eq:sg} for all $t\geqslant 0$.
\end{proof}
\begin{theorem}\label{thm:inv-m}
Let $X$ be the martingale solution to the singular periodic SDE \eqref{eq:sde} projected onto $\mathbb{T}^{d}$ with contraction semigroup $(T_{t})_{t\geqslant 0}$ on bounded measurable functions $f:\mathbb{T}^{d}\to\R$.\\ 
Then there exists a unique invariant probability measure $\pi$ for $(T_{t})$. In particular, $\pi$ is ergodic for $X$. Furthermore there exist constants $K,\mu>0$ such that for all $f\in L^{\infty}(\mathbb{T}^{d})$,
\begin{align}\label{eq:p-sp-est}
\norm{T_{t}f-\langle f\rangle_{\pi}}_{L^{\infty}}\leqslant  K \norm{f}_{L^{\infty}} e^{-\mu t}.
\end{align} That is, $L^{\infty}$-spectral gap estimates for the associated Markov semigroup $(T_{t})$ hold true. In particular, $\pi$ is absolutely continuous with respect to the Lebesgue measure on the torus, with density denoted by $\rho_{\infty}$.
\end{theorem}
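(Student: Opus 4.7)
The plan is to apply Doeblin's theorem (\cref{thm:doeblin}) directly, with state space $S=\mathbb{T}^{d}$, reference measure $\mu$ equal to the Lebesgue measure on $\mathbb{T}^{d}$, and density $\rho_{t}(z,w)$ given by the projected Fokker-Planck density constructed in \cref{cor:density} (see \cref{rmk:FP-torus}). All the analytic work has been carried out in \cref{sec:s-FP}; the proof amounts to checking the hypotheses of Doeblin.

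First, I would verify that $(T_{t})$ has the required integral representation. By \cref{cor:density}, for any $f\in L^{\infty}(\mathbb{T}^{d})$ and $z\in\mathbb{T}^{d}$ one has
\[
T_{t}f(z)=\mathbb{E}_{X_{0}^{\mathbb{T}^{d}}=z}[f(X_{t}^{\mathbb{T}^{d}})]=\int_{\mathbb{T}^{d}}f(w)\rho_{t}(z,w)\diff w,
\]
and the map $(z,w)\mapsto\rho_{t}(z,w)$ is continuous on $\mathbb{T}^{d}\times\mathbb{T}^{d}$ for every $t>0$. So the kernel hypothesis of \cref{thm:doeblin} holds. Next, the strict maximum principle of \cref{prop:max-p} yields, for every $t>0$, a constant $c>0$ with $\rho_{t}(z,w)\geqslant c$ for all $z,w\in\mathbb{T}^{d}$. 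In particular, the positivity condition of Doeblin's theorem is satisfied in the strong form that $\rho_{t}(z,w)>0$ on all of $\mathbb{T}^{d}\times\mathbb{T}^{d}$, so we may take $U_{0}=\mathbb{T}^{d}$, for which $\mu(U_{0})=1>0$.

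Doeblin's theorem then gives a unique invariant probability measure $\pi$ on $\mathbb{T}^{d}$ together with constants $K,\mu>0$ such that, for every bounded measurable $\phi$ and every $x\in\mathbb{T}^{d}$,
\[
\bigl|T_{t}\phi(x)-\langle\phi\rangle_{\pi}\bigr|\leqslant K\|\phi\|_{L^{\infty}}e^{-\mu t}.
\]
Taking the supremum in $x$ yields the $L^{\infty}$-spectral gap estimate \eqref{eq:p-sp-est}. Ergodicity of $\pi$ follows from uniqueness: invariant measures form a convex set whose extreme points are the ergodic measures, and a unique invariant measure is automatically extremal, hence ergodic.

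For absolute continuity of $\pi$ with respect to Lebesgue measure, I would exploit the invariance identity itself. Fix any $t>0$. For every Borel set $E\subset\mathbb{T}^{d}$, invariance combined with the kernel representation gives
\[
\pi(E)=\int_{\mathbb{T}^{d}}T_{t}\mathbf{1}_{E}(z)\,\pi(\diff z)=\int_{\mathbb{T}^{d}}\int_{E}\rho_{t}(z,w)\diff w\,\pi(\diff z)=\int_{E}\rho_{\infty}(w)\diff w,
\]
where $\rho_{\infty}(w):=\int_{\mathbb{T}^{d}}\rho_{t}(z,w)\,\pi(\diff z)$ is, by Fubini--Tonelli and the continuity of $\rho_{t}$, a well-defined nonnegative function on $\mathbb{T}^{d}$ independent of the choice of $t>0$ up to a Lebesgue-null set. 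This identifies $\rho_{\infty}$ as the Lebesgue density of $\pi$. No step here presents a genuine obstacle: all the difficulty lay in constructing the transition density and proving its strict positivity in \cref{sec:s-FP}.
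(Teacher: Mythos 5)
Your argument is correct and follows the same structural plan as the paper: verify the hypotheses of Doeblin's theorem (\cref{thm:doeblin}) using the kernel representation from \cref{cor:density}, the continuity of $(z,w)\mapsto\rho_{t}(z,w)$, and the strict positivity from \cref{prop:max-p}. Two small remarks. First, be slightly more careful with the uniform lower bound: \cref{prop:max-p} gives, for each \emph{fixed} $z$, a constant $c(z)>0$ with $\min_{w}\rho_{t}(z,w)\geqslant c(z)$; the uniform $c>0$ over all $z,w$ then follows by combining this with joint continuity of $\rho_t$ and compactness of $\mathbb{T}^{d}\times\mathbb{T}^{d}$ (which is exactly how the paper argues, by evaluating at a minimizer $(x^*,y^*)$). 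In any case, only pointwise positivity is needed for Doeblin, so this is not a gap. Second, your treatment of absolute continuity is a genuinely different and arguably cleaner variant: you construct the density $\rho_\infty(w)=\int_{\mathbb{T}^d}\rho_t(z,w)\,\pi(\diff z)$ explicitly from the invariance identity plus Fubini, whereas the paper argues more indirectly that the spectral gap estimate forces every Lebesgue-nullset to be a $\pi$-nullset and then appeals to Radon--Nikodym. Your version has the small advantage of exhibiting $\rho_\infty$ as an integral of the transition density, which foreshadows the continuity and positivity properties established in \cref{cor:inv-d}; the paper's route is shorter but does not produce a formula.
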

\begin{proof}
The proof is an application of Doeblin's theorem.
We check, that the assumptions of \cref{thm:doeblin} are satisfied. To that aim, note that for the Fokker-Planck density $\rho_{t}(x,\cdot)$ with $\rho_{0}=\delta_{x}$, the map $(x,y)\mapsto \rho_{t}(x,y)$ is continuous by \cref{thm:ex-fp-d}. It remains to show that there exists an open ball $U_{0}$ and a constant $c>0$, such that $\rho_{t}$ is bounded from below by $c$ on $\mathbb{T}^{d}\times U_{0}$. We choose $U_{0}=\mathbb{T}^{d}$ and obtain 
\begin{align*}
\min_{x\in\mathbb{T}^{d},y\in U_{0}}\rho_{t}(x,y)=\rho_{t}(x^*,y^*)\geqslant c>0.
\end{align*} 
Indeed, this follows from the strict maximum principle for $y\mapsto\rho_{t}(x^*,y)$ by \cref{prop:max-p} with $c=c(x^*)>0$.\\ 
The spectral gap estimates also imply absolute continuity, as 
\begin{align*}
\langle 1_{A}\rangle_{\pi}=\lim_{t\to\infty} \E_{X_{0}=x}[1_{A}(X_{t})]=\lim_{t\to\infty} \int 1_{A}(y)\rho_{t}(x,y)dy
\end{align*} and thus any Lebesgue nullset $A$ is also a $\pi$-nullset. The existence of the density thus follows by the Radon-Nikodym theorem. 
\end{proof}

\begin{corollary}\label{cor:inv-d}
Let $\rho_{\infty}$ be the Lebesgue density of the invariant measure $\pi$. Then $\rho_{\infty}\in \calC^{\alpha+\beta-1}(\mathbb{T}^{d})$ and it follows the paracontrolled structure 
\begin{align*}
\rho_{\infty}=\rho_{\infty}^{\sharp}+\rho_{\infty}\para I_{\infty}(\nabla\cdot F),
\end{align*} where $\rho_{\infty}^{\sharp}\in\calC^{2(\alpha+\beta)-2}(\mathbb{T}^{d})$ and $I_{\infty}(\nabla\cdot F):=\int_{0}^{\infty}P_{s}(\nabla\cdot F)ds$.\\ Furthermore, the density is strictly positive,
\begin{align*}
\min_{x\in\mathbb{T}^{d}}\rho_{\infty}(x)>0.
\end{align*}
In particular, $\pi$ is equivalent to the Lebesgue measure. 
\end{corollary}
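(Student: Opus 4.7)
The plan is to realize $\rho_\infty$ as a stationary mild solution of the Fokker-Planck equation, transfer its regularity and paracontrolled structure via \cref{thm:ex-fp-d}, then take $t\to\infty$ in the decomposition using the exponential Schauder estimate of \cref{lem:exp-schauder}, and finally deduce strict positivity from the lower bound \cref{prop:max-p}.

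First I would observe that invariance of $\pi$ gives $\int f\,d\pi=\int T_tf\,d\pi$ for every $f\in L^\infty(\mathbb{T}^d)$; combined with the density formula \eqref{eq:tilde-d} this rewrites as $\rho_\infty(y)=\int_{\mathbb{T}^d}\rho_t(x,y)\rho_\infty(x)\,dx$ for a.e.\ $y$. The right-hand side is, by linearity of the Fokker-Planck equation and the continuous dependence on initial data in \cref{thm:ex-fp-d} (approximating $\rho_\infty\in L^1$ by finite combinations of Dirac masses), the unique mild Fokker-Planck solution started from $\rho_\infty\in L^1(\mathbb{T}^d)\subset\calC^0_1(\mathbb{T}^d)$. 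Hence $\rho_\infty$ itself is this solution, and invoking \cref{thm:ex-fp-d} with $\mu=\rho_\infty$ yields $\rho_\infty\in\calC^{\alpha+\beta-1}(\mathbb{T}^d)$ and, for every fixed $t>0$, a paracontrolled decomposition $\rho_\infty=\rho_\infty^{\sharp,t}+\rho_\infty\para I_t(-\nabla\cdot F)$ with $\rho_\infty^{\sharp,t}\in\calC^{2(\alpha+\beta)-2}(\mathbb{T}^d)$ (in the Young regime $\beta>(1-\alpha)/2$ the paracontrolled term is unnecessary and the conclusion is immediate).

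Next I would let $t\to\infty$. Since $\widehat{\nabla\cdot F}(0)=0$, \cref{lem:exp-schauder} gives $\norm{P_s(\nabla\cdot F)}_{\calC^{\beta-1+\theta}_2}\lesssim s^{-\theta/\alpha}e^{-cs}$ for $\theta\in[0,\alpha)$, so $I_\infty(\nabla\cdot F):=\int_0^\infty P_s(\nabla\cdot F)\,ds$ is well-defined and $I_t(\nabla\cdot F)\to I_\infty(\nabla\cdot F)$ in every $\calC^{\theta'}_2(\mathbb{T}^d)$ with $\theta'<\alpha+\beta-1$. Continuity of the paraproduct yields $\rho_\infty\para I_t(-\nabla\cdot F)\to\rho_\infty\para I_\infty(-\nabla\cdot F)$, and consequently $\rho_\infty^{\sharp,t}$ converges to $\rho_\infty^\sharp:=\rho_\infty-\rho_\infty\para I_\infty(-\nabla\cdot F)$. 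To secure the target regularity $\rho_\infty^\sharp\in\calC^{2(\alpha+\beta)-2}$ I would pass to the limit in the explicit mild-form expansion from the proof of \cref{thm:ex-fp-d}, namely $\rho_\infty^{\sharp,t}=P_t\rho_\infty+C_2(\rho_\infty,\nabla\cdot F)_t+I_t(-\nabla\cdot(\rho_\infty\reso F))+I_t(-\nabla\cdot(\rho_\infty\arap F))+I_t(-\nabla\rho_\infty\para F)$, after splitting $P_t\rho_\infty=1+P_t(\rho_\infty-1)$ so that the second summand has zero mean and decays exponentially. Convergence of the remaining $I_t$-terms then uses the $\infty$-subscript in the enhancement $F\in\calX^{\beta,\gamma}_\infty$, which is designed precisely so that the stochastic datum $P_s(\partial_iF^j)\reso F^k$ is integrable over $[0,\infty)$.

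Finally, strict positivity follows by re-using the representation $\rho_\infty(y)=\int_{\mathbb{T}^d}\rho_1(x,y)\rho_\infty(x)\,dx$: the density $(x,y)\mapsto\rho_1(x,y)$ is jointly continuous on the compact set $\mathbb{T}^d\times\mathbb{T}^d$ by \cref{cor:density}, and by \cref{prop:max-p} applied to $\rho_1(x,\cdot)$ it is strictly positive, so compactness gives a uniform lower bound $\rho_1(x,y)\geq c>0$; hence $\rho_\infty(y)\geq c\int\rho_\infty\,dx=c>0$, and equivalence with Lebesgue measure is immediate. The main obstacle is in the previous paragraph: showing that \emph{all} remainder terms in the expansion of $\rho_\infty^{\sharp,t}$ converge in $\calC^{2(\alpha+\beta)-2}(\mathbb{T}^d)$ as $t\to\infty$, which demands a careful separation of the constant zero-Fourier-mode of $\rho_\infty$ (the conserved mass) and essential use of the $[0,\infty)$-integrability baked into $\calX^{\beta,\gamma}_\infty$.
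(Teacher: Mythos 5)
Your proposal follows the paper's overall strategy — realize $\rho_\infty$ as a stationary mild Fokker--Planck solution, pull the paracontrolled decomposition and positivity from \cref{thm:ex-fp-d} and \cref{prop:max-p}, then replace the finite-time kernel $I_t$ by $I_\infty$ — but two sub-steps are handled differently from the paper, the second less efficiently.

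For identifying $T_t^*\rho_\infty$ with the mild solution, you approximate the \emph{initial datum} $\rho_\infty$ by finite Dirac combinations and invoke linearity plus continuous dependence on $\mu$. The paper instead fixes $\mu=\rho_\infty$ and approximates the \emph{drift} $F$ by smooth $F^m$: then $(T_t^m)^*\rho_\infty$ is a classical Fokker--Planck solution, and continuity of the solution map in $(F,\mu)$ (already proved in \cref{thm:ex-fp-d}) gives the identification. The latter is preferable here because it stays inside established machinery; your route additionally needs density of Dirac combinations in $\calC^{-\epsilon}_1(\mathbb{T}^d)$ together with convergence of the corresponding Riemann sums $\sum_i a_i^n\rho_t(x_i^n,\cdot)$ to $\int\rho_t(x,\cdot)\rho_\infty(x)\,dx$, neither of which is established in the paper and both of which deserve an argument.

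The bigger difference is in the $t\to\infty$ step, which you flag as the main obstacle and propose to resolve by a term-by-term passage to the limit in the mild-form expansion of $\rho_\infty^{\sharp,t}$, with a zero-mode split of $P_t\rho_\infty$. This would work, but the paper has a much shorter route that you should see: by \cref{lem:exp-schauder} (exponential Schauder for zero Fourier mode), the tail $I_\infty(\nabla\cdot F)-I_t(\nabla\cdot F)=\int_t^\infty P_s(\nabla\cdot F)\,ds$ lies in $\calC^\theta(\mathbb{T}^d)$ for \emph{every} $\theta\geq 0$, not merely for $\theta<\alpha+\beta-1$ as you state (you only need the weaker claim for convergence of $I_t$, but for the $\rho_\infty^\sharp$-regularity you want the full smoothness). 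Hence $\rho_\infty\para\bigl(I_\infty-I_t\bigr)(\nabla\cdot F)$ is arbitrarily smooth, and one can simply set
\begin{align*}
\rho_\infty^\sharp:=\rho_\infty^{\sharp,t}-\rho_\infty\para\bigl(I_\infty(\nabla\cdot F)-I_t(\nabla\cdot F)\bigr)
\end{align*}
for any single fixed $t>0$; this is already in $\calC^{2(\alpha+\beta)-2}(\mathbb{T}^d)$. No uniform-in-$t$ bound, no limit of $\rho_\infty^{\sharp,t}$, and no handling of the constant mode are needed. Your strict-positivity argument via a uniform lower bound on $\rho_1(\cdot,\cdot)$ is fine and equivalent to the paper's, which applies \cref{prop:max-p} directly to the stationary solution.
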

\begin{proof}
Let $t>0$. By invariance of $\pi$, i.e. $\langle T_{t}f\rangle_{\pi}=\langle f\rangle_{\pi}$ for all $f\in L^{\infty}(\mathbb{T}^{d})$, and $d\pi=\rho_{\infty}dx$, we obtain that almost surely
\begin{align*}
\rho_{\infty}=T_{t}^{\ast}\rho_{\infty},
\end{align*} where $T_{t}^{\ast}$ denotes the adjoint of $T_{t}$ with respect to $L^{2}(\lambda)$. Here $\lambda$ denotes the Lebesgue measure and $\langle f\rangle_{\pi}:=\int_{\mathbb{T}^{d}}f(x)\pi(dx)$.\\ Denote $y_{t}(x):=T_{t}^{\ast}\rho_{\infty} (x)$. Then we show that $y$ is a mild solution of the Fokker-Planck equation started in $\rho_{\infty}$, that is 
\begin{align}\label{eq:fp-y}
(\partial_{t}-\mathfrak{L}^{\ast})y=0,\quad y_{0}=\rho_{\infty}.
\end{align} 
Here the density satisfies $\rho_{\infty}\in L^{1}(\lambda)$, i.p. $\rho_{\infty}\in \calC^{0}_{1}(\mathbb{T}^{d})$. Indeed, that $y_{t}=T_{t}^{\ast}\rho_{\infty}$ is a mild solution of the Fokker-Planck equation follows from approximation of $F$ by $F^{m}\in C^{\infty}(\mathbb{T}^{d})$ with $F^{m}\to F$ in $\mathcal{X}^{\beta,\gamma}_{\infty}(\mathbb{T}^{d})$ using that for $m\in\N$, $y^{m}=(T^{m}_t)^{\ast}\rho_{\infty}$ solves $(\partial_{t}-(\mathfrak{L}^{m})^{\ast})y^{m}=0$, $y^{m}=\rho_{\infty}$ by the classical Fokker-Planck theory, where $T^{m}$ denotes the semigroup for the strong solution of the SDE with drift $F^{m}$ and generator $\mathfrak{L}^{m}:=-\La+F^{m}\cdot\nabla$. By continuity of the Fokker-Planck solution map from \cref{thm:ex-fp-d} for converging data $(F^{m},\rho_{\infty})\to (F,\rho_{\infty})$ in $\mathcal{X}^{\beta,\gamma}_{\infty}(\mathbb{T}^{d})\times\calC^{0}_{1}(\mathbb{T}^{d})$, we deduce $y^{m}\to y$ in the paracontrolled solution space, where $y$ is the mild solution of \eqref{eq:fp-y}.\\
The lower bound away from zero then also follows from \cref{thm:ex-fp-d}, as well as the paracontrolled structure
\begin{align*}
\rho_{\infty}=\rho_{\infty}^{\sharp}+\rho_{\infty}\para I_{t}(\nabla\cdot F),
\end{align*} where $\rho_{\infty}^{\sharp}:=y_{t}^{\sharp}\in\calC^{2(\alpha+\beta)-2}(\mathbb{T}^{d})$ and $I_{t}(\nabla\cdot F):=\int_{0}^{t}P_{t-s}(\nabla\cdot F)ds$.\\ Due to $\F_{\mathbb{T}^{d}}(\nabla\cdot F)(0)=0$, we have that, for any $\theta\geqslant 0$, there exists $c>0$, such that, uniformly in $s>0$,
\begin{align}\label{eq:schauder-zero-fourier}
\norm{P_{s}(\nabla\cdot F)}_{\calC^{\beta-1+\theta}(\mathbb{T}^{d})}\lesssim s^{-\theta/\alpha}e^{-cs}\norm{\nabla\cdot F}_{\calC^{\beta-1}(\mathbb{T}^{d})}.
\end{align} 
Indeed, this follows from \cref{lem:exp-schauder}.
Thus we obtain, for $t>0$ and any $\theta\geqslant 0$, that
\begin{align*}
I_{t}(\nabla\cdot F)-I_{\infty}(\nabla\cdot F)=\int_{t}^{\infty}P_{s}(\nabla\cdot F)ds\in\calC^{\theta}(\mathbb{T}^{d}).
\end{align*}
That is, the remainder is smooth and thus can be absorbed into $\rho_{\infty}^{\sharp}$. Notice, that $\int_{0}^{t}P_{s}(\nabla\cdot F)ds\in\calC^{\alpha+\beta-1}(\mathbb{T}^{d})$ by \eqref{eq:schauder-zero-fourier} and in particular that $I_{\infty}(\nabla\cdot F)\in\calC^{\alpha+\beta-1}(\mathbb{T}^{d})$ is well-defined. 
\end{proof}

\begin{corollary}\label{cor:L2-spectral-gap}
Let $X$ and $(T_{t})$ be as before. Then, the semigroup $(T_{t})_{t\geqslant 0}$ can be uniquely extended to a strongly continuous contraction semigroup on $L^{2}(\pi)$, i.e. $T_{t+s}=T_{t}T_{s}$, $T_{t}1=1$, $T_{t}f\to f$ for $t\downarrow 0$ and $f\in L^{2}(\pi)$ and $\norm{T_{t}f}_{L^{2}(\pi)}\leqslant\norm{f}_{L^{2}(\pi)}$, such that and for (possibly different) constants $K,\mu>0$, the $L^{2}(\pi)$-spectral gap estimates hold true:
\begin{align*}
\norm{T_{t}f-\langle f\rangle_{\pi}}_{L^{2}(\pi)}\leqslant  K \norm{f}_{L^{2}(\pi)} e^{-\mu t}\quad \text{ for all } f\in L^{2}(\pi).
\end{align*}
\end{corollary}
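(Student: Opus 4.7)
The plan is to extend $(T_{t})$ to $L^{2}(\pi)$ via density, deduce the contraction property from Jensen's inequality and invariance of $\pi$, establish strong continuity through a $3\varepsilon$-argument reducing to continuous test functions, and finally upgrade the $L^{\infty}$-spectral gap of \cref{thm:inv-m} to $L^{2}(\pi)$ by exploiting the smoothing effect of $T_{1}$ encoded in the Fokker-Planck density $\rho_{1}$.

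First I would check that for $f\in L^{\infty}(\mathbb{T}^{d})$, Jensen's inequality gives $|T_{t}f|^{2}\leqslant T_{t}(|f|^{2})$ pointwise, so integrating against $\pi$ and using invariance yields $\norm{T_{t}f}_{L^{2}(\pi)}\leqslant\norm{f}_{L^{2}(\pi)}$. Since $L^{\infty}(\mathbb{T}^{d})$ is dense in $L^{2}(\pi)$ (as $\mathbb{T}^{d}$ is compact and $\pi$ is Radon), $T_{t}$ extends uniquely to a contraction on $L^{2}(\pi)$, and the semigroup identity $T_{t+s}=T_{t}T_{s}$ together with $T_{t}1=1$ carries over by continuity. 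For strong continuity, I would first treat $f\in C(\mathbb{T}^{d})$: since $X$ is càdlàg with $X_{0}=x$, dominated convergence gives $T_{t}f(x)\to f(x)$ pointwise, and another application of dominated convergence (using $\pi(\mathbb{T}^{d})=1$ and $\norm{T_{t}f}_{L^{\infty}}\leqslant\norm{f}_{L^{\infty}}$) yields $T_{t}f\to f$ in $L^{2}(\pi)$. A standard $3\varepsilon$-argument combined with the uniform $L^{2}(\pi)$-contractivity then extends strong continuity to all of $L^{2}(\pi)$.

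The key step for the spectral gap is the smoothing property of $T_{1}$. By \cref{cor:density}, for fixed $t>0$ the density $(x,y)\mapsto\rho_{t}(x,y)$ is continuous on the compact set $\mathbb{T}^{d}\times\mathbb{T}^{d}$, hence bounded. Combined with the fact that $\rho_{\infty}$ is bounded above and below away from zero (cf. \cref{cor:inv-d}), so that $L^{2}(\pi)$ and $L^{2}(dx)$ have equivalent norms, one has for $f\in L^{2}(\pi)$ the pointwise bound
\begin{align*}
\abs{T_{1}f(x)}\leqslant\norm{\rho_{1}}_{L^{\infty}(\mathbb{T}^{d}\times\mathbb{T}^{d})}\int_{\mathbb{T}^{d}}\abs{f(y)}dy\lesssim\norm{f}_{L^{2}(\pi)},
\end{align*}
so $T_{1}\colon L^{2}(\pi)\to L^{\infty}(\mathbb{T}^{d})$ is bounded. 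Moreover, invariance gives $\langle T_{1}f\rangle_{\pi}=\langle f\rangle_{\pi}$, so subtracting the mean commutes with $T_{1}$.

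Finally, for $f\in L^{2}(\pi)$ with $\langle f\rangle_{\pi}=0$ and $t\geqslant 1$, I would write $T_{t}f=T_{t-1}(T_{1}f)$ with $T_{1}f\in L^{\infty}$ of zero $\pi$-mean, and apply \eqref{eq:p-sp-est}:
\begin{align*}
\norm{T_{t}f}_{L^{2}(\pi)}\leqslant\norm{T_{t-1}(T_{1}f)}_{L^{\infty}}\leqslant K\norm{T_{1}f}_{L^{\infty}}e^{-\mu(t-1)}\lesssim e^{\mu}\norm{f}_{L^{2}(\pi)}e^{-\mu t}.
\end{align*}
For $0\leqslant t\leqslant 1$, the contraction property already gives $\norm{T_{t}f}_{L^{2}(\pi)}\leqslant\norm{f}_{L^{2}(\pi)}\leqslant e^{\mu}\norm{f}_{L^{2}(\pi)}e^{-\mu t}$. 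Adjusting the constant $K$ then yields the claimed $L^{2}(\pi)$-spectral gap for all $t\geqslant 0$. The main obstacle, though a mild one, is the smoothing bound $T_{1}\colon L^{2}(\pi)\to L^{\infty}$, which hinges on uniform boundedness of the Fokker--Planck density at positive time; everything else is a routine density/three-$\varepsilon$ argument.
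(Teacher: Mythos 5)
Your proposal is correct and follows essentially the same route as the paper: Jensen plus invariance for contraction, reduction to continuous $f$ plus dominated convergence and a density argument for strong continuity, and an $L^{2}\to L^{\infty}$ smoothing bound for $T_{1}$ (via boundedness of the Fokker--Planck density and the lower bound on $\rho_{\infty}$) to upgrade the $L^{\infty}$-spectral gap of \cref{thm:inv-m} to $L^{2}(\pi)$. The only difference is that you use the simple $L^{\infty}$-contractivity $\norm{T_{t}f}_{L^{\infty}}\leqslant\norm{f}_{L^{\infty}}$ as the dominating function for continuous $f$ and bound $\rho_{1}$ by continuity at the fixed time $t=1$, whereas the paper establishes the stronger uniform-in-$t$ density bound $\sup_{t>0}\max_{x,y}\rho_{t}(x,y)<\infty$ via the $L^{\infty}$-spectral gap applied to shrinking indicators; your version is a mild economy and suffices for the corollary.
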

\begin{proof}
That the semigroup $(T_{t})_{t\geqslant 0}$ can be uniquely extended to a contraction semigroup on $L^{2}(\pi)$ follows from Jensen's inequality, 
\begin{align*}
\norm{T_{t}f}_{L^{2}(\pi)}^{2}=\int \abs{\E_{X_{0}=x}[f(X_{t})]}^{2}\pi(dx)\leqslant\int \E_{X_{0}=x}[\abs{f(X_{t})}^{2}]\pi(dx)=\norm{f}_{L^{2}(\pi)}^{2},
\end{align*}
for $f\in L^{\infty}$, using the invariance of $\pi$ (by \cref{thm:inv-m}).
By approximation, we then also obtain for the extension, that $T_{t}f(x)=\E_{X_{0}=x}[f(X_{t})]$ for $f\in L^{2}(\pi)$.\\ We check strong continuity of the semigroup on $L^{2}(\pi)$. 
Using the contraction property in $L^{2}(\pi)$, we obtain
\begin{align}\label{eq:Test}
\norm{T_{t}f-f}_{L^{2}(\pi)}^{2}=\norm{T_{t}f}_{L^{2}(\pi)}^{2}+\norm{f}_{L^{2}(\pi)}^{2}-2\langle T_{t}f,f\rangle_{\pi}\leqslant 2\norm{f}_{L^{2}(\pi)}^{2}-2\langle T_{t}f,f\rangle_{\pi}.
\end{align}
It is left to prove that the right-hand side vanishes as $t\downarrow 0$.
By Fatou's lemma and using that $X$ is almost surely càdlàg, we have that for $x\in\mathbb{T}^{d}$ and $f\in C(\mathbb{T}^{d},\R)$,
\begin{align}\label{eq:conv}
\lim_{t\downarrow 0}\abs{T_{t}f(x)-f(x)}\leqslant\E_{X_{0}=x}[\lim_{t\downarrow 0}\abs{f(X_{t})-f(X_{0})}]=0.
\end{align}
Furthermore, we can bound uniformly in $x\in\mathbb{T}^{d}$ and $t>0$,
\begin{align}\label{eq:bound-PH}
\abs{T_{t}f(x)}=\abs[\bigg]{\int\rho_{t}(x,y)f(y)dy}\leqslant \frac{\sup_{t>0}\max_{x,y\in\mathbb{T}^{d}}\rho_{t}(x,y)}{\min_{y\in\mathbb{T}^{d}}\rho_{\infty}(y)}\norm{f}_{L^{1}(\pi)}\leqslant C \norm{f}_{L^{2}(\pi)}
\end{align}
where $C>0$ is a constant (not depending on $t$, $f$) and $\rho_{t}(x,y)$ denotes the Fokker-Planck density with $\rho_{0}(x,y)=\delta_{x}$. Here, we have $\min_{y\in\mathbb{T}^{d}}\rho_{\infty}(y)>0$ by \cref{cor:inv-d}. Furthermore we have
\begin{align}\label{eq:unif-bound}
\sup_{t>0}\max_{x,y\in\mathbb{T}^{d}}\rho_{t}(x,y)<\infty.
\end{align} 
Indeed, by the $L^{\infty}$-spectral gap estimates, it follows that 
\begin{align*}
\sup_{t\geqslant 0}\norm{\rho_{t}\ast f}_{L^{\infty}}\leqslant K\norm{f}_{L^{\infty}}+\abs{\langle f\rangle_{\pi}},
\end{align*} with convolution $(\rho_{t}\ast f)(x):=\int_{\mathbb{T}^{d}}\rho_{t}(x,y)f(y)dy$. We can apply this bound for $f^{\epsilon,\tilde{y}}(y):=\mathbf{1}_{\abs{y-\tilde{y}}<\epsilon}$ for $\tilde{y}\in\mathbb{T}^{d}$ and $\epsilon>0$ and let $\epsilon\downarrow 0$. By continuity of $y\mapsto\rho_{t}(x,y)$ and the dominated convergence theorem, $(\rho_{t}\ast f^{\epsilon,\tilde{y}})(x)\to\rho_{t}(x,\tilde{y})\lambda(\mathbb{T}^{d})$, which yields \eqref{eq:unif-bound}.\\
In particular, by \eqref{eq:bound-PH}, $\sup_{t>0}\norm{T_{t}f}_{L^{\infty}}\lesssim \norm{f}_{L^{2}(\pi)}$ and an application of the dominated convergence theorem using \eqref{eq:conv}, yields that for $f\in C(\mathbb{T}^{d},\R)$,
\begin{align*}
\lim_{t\downarrow 0}\langle T_{t}f,f\rangle_{\pi}=\norm{f}_{L^{2}(\pi)}^{2}.
\end{align*}
We conclude with \eqref{eq:Test}, that for all $f\in C(\mathbb{T}^{d},\R)$, $\norm{T_{t}f-f}_{L^{2}(\pi)}\to 0$ as $t\downarrow 0$.\\
As $(T_{t})$ is a contraction semigroup on $L^{2}(\pi)$, the operator norm is trivially bounded, that is $\sup_{t\geqslant 0}\norm{T_{t}}_{L(L^{2}(\pi))}\leqslant 1$. Above, we proved that $(T_{t})$ is strongly continuous on a dense subset of $L^{2}(\pi)$. Thus, together with boundedness of the operator norm, $(T_{t})$ is also strongly continuous on $L^{2}(\pi)$ as a consequence of the Banach-Steinhaus theorem.\\
It remains to prove that the $L^{2}(\pi)$-spectral gap estimates follow from the $L^{\infty}$-spectral gap estimates and the bound \eqref{eq:bound-PH}.
Indeed, we obtain for $f\in L^{2}(\pi)$ with $\langle f\rangle_{\pi}=0$ and all $t>1$,
\begin{align*}
\norm{T_{t}f}_{L^{2}(\pi)}=\norm{T_{t-1}T_{1}
f}_{L^{2}(\pi)}\leqslant K e^{-\mu(t-1)}\norm{T_{1}f}_{L^{\infty}}\leqslant e^{\mu}CK e^{-\mu t}\norm{f}_{L^{2}(\pi)}.
\end{align*} 
For $t\in[0,1]$, we trivially estimate, using the contraction property, 
\begin{align*}
 \norm{T_{t}f}_{L^{2}(\pi)}\leqslant \norm{f}_{L^{2}(\pi)}\leqslant e^{\mu}e^{-\mu t} \norm{f}_{L^{2}(\pi)}.
\end{align*} 
\end{proof}
\begin{remark}
The argument in the above proof of \cref{cor:L2-spectral-gap} (using the bound \eqref{eq:unif-bound} and $\rho_{\infty}>0$) can be adapted to prove the stronger estimate (for constants $K,\mu>0$)
\begin{align*}
\norm{T_{t}f-\langle f\rangle_{\pi}}_{L^{\infty}}\leqslant Ke^{-\mu t}\norm{f-\langle f\rangle_{\pi}}_{L^{1}(\pi)},
\end{align*} which in particular implies the $L^{2}(\pi)$-$L^{2}(\pi)$-bound from the corollary.
\end{remark}
\begin{remark}
More generally, one can show the Feller property, that is $(T_{t})$ is strongly continuous on $C(\mathbb{T}^{d})$. Using \cite[Proposition III.2.4]{Revuz1999} and \eqref{eq:conv}, it is left to show $T_{t}f\in C(\mathbb{T}^{d})$ for $f\in C(\mathbb{T}^{d})\subset\calC^{0}(\mathbb{T}^{d})$. But this follows from \cite[Theorem 4.7]{kp-sk}, since for $R>t$, $y_{t}=T_{R-t}f$ solves the backward Kolmogorov equation with periodic terminal condition $y_{R}=f\in \calC^{0}$ and $y\in\mathcal{M}_{R}^{\gamma}\calC^{\alpha+\beta}$, such that in particular $x\mapsto y_{t}(x)$ is continuous.
\end{remark}
\noindent The next theorem relates the semigroup $(T_{t})_{t\geqslant 0}$ from above with the generator $\mathfrak{L}$ and gives an explicit representation of its domain in terms of paracontrolled solutions of singular resolvent equations.
\begin{theorem}\label{thm:generator}
Let $(T_{t})$ be the contraction semigroup on $L^{2}(\pi)$ from \cref{cor:L2-spectral-gap} and denote its generator by $(A,dom(A))$ with $A:dom(A)\subset L^{2}(\pi)\to L^{2}(\pi)$ and domain $dom(A):=\{f\in L^{2}(\pi)\mid \lim_{t\to 0} (T_{t}f-f)/t =:Af\text{ exists in } L^{2}(\pi)\}$. Let $\theta\in ((1+\alpha)/2,\alpha+\beta)$ and 
\begin{align*}
D:=\{g\in \mathcal{D}^{\theta}_{2}\mid R_{\lambda}g= G\text{ for some } G\in L^{2}(\pi)\text{ and }\lambda>0 \},
\end{align*}
where $R_{\lambda}:=(\lambda-\mathfrak{L})$.\\ 
Then it follows $D = dom(A)$ and $(A,D)=(\mathfrak{L},D)$. In particular, $(\mathfrak{L},D)$ is the generator of the Markov process $X$ with state space $\mathbb{T}^{d}$ and transition semigroup $(T_{t})$.
\end{theorem}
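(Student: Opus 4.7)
The plan is to establish both inclusions $D\subset\operatorname{dom}(A)$ and $\operatorname{dom}(A)\subset D$, with $A|_{D}=\mathfrak{L}|_{D}$, by identifying the paracontrolled solution of the resolvent equation with the functional-analytic resolvent of $A$ on $L^{2}(\pi)$,
\begin{align*}
R(\lambda,A)G=\int_{0}^{\infty}e^{-\lambda t}T_{t}G\,dt,
\end{align*}
whose range equals $\operatorname{dom}(A)$ by standard $C_{0}$-semigroup theory applied to \cref{cor:L2-spectral-gap}. Note that the definition of $D$ does not depend on the choice of $\lambda>0$, since $g\in\mathcal{D}^{\theta}_{2}\hookrightarrow L^{2}(\pi)$ (using $\theta>0$ and the boundedness of $\rho_{\infty}$ from \cref{cor:inv-d}).

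For the inclusion $D\subset\operatorname{dom}(A)$, fix $g\in D$ with $(\lambda-\mathfrak{L})g=G\in L^{2}(\pi)$ and $\lambda>0$ large enough for \cref{thm:res-eq}. The crux is to prove the Feynman--Kac type representation
\begin{align}\label{eq:plan-fk}
g(x)=\int_{0}^{\infty}e^{-\lambda t}T_{t}G(x)\,dt\qquad\text{in }L^{2}(\pi),
\end{align}
which identifies $g$ with $R(\lambda,A)G$ and thus yields $g\in\operatorname{dom}(A)$ and $Ag=\lambda g-G=\mathfrak{L}g$. I would prove \eqref{eq:plan-fk} by a double smooth approximation: pick $F^{m}\in C^{\infty}(\mathbb{T}^{d},\R^{d})$ with $F^{m}\to F$ in $\mathcal{X}^{\beta,\gamma}_{\infty}(\mathbb{T}^{d})$ (available since by definition $F$ lies in the closure of smooth drifts) and $G^{n}\in C^{\infty}(\mathbb{T}^{d})$ with $G^{n}\to G$ in $L^{2}(\pi)$. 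For the resulting strong SDE solution $X^{m}$ with semigroup $(T^{m}_{t})$, classical Dynkin gives for the classical resolvent solution $g^{m,n}=(\lambda-\mathfrak{L}^{m})^{-1}G^{n}$ the identity $g^{m,n}(x)=\int_{0}^{\infty}e^{-\lambda t}T^{m}_{t}G^{n}(x)\,dt$. Continuity of the paracontrolled resolvent fixed point from \cref{thm:res-eq} in the data $(F,G)$ yields $g^{m,n}\to g$ in $\mathcal{D}^{\theta}_{2}$, while stability of the martingale solutions from \cref{thm:mainthm1} under drift approximation, combined with the $L^{2}(\pi)$-contraction of $(T^{m}_{t})$ and $(T_{t})$, gives convergence of the right-hand sides to $\int_{0}^{\infty}e^{-\lambda t}T_{t}G(x)\,dt$ in $L^{2}(\pi)$.

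For the reverse inclusion $\operatorname{dom}(A)\subset D$, take $f\in\operatorname{dom}(A)$ and $\lambda>0$ large enough for \cref{thm:res-eq}. Setting $G:=(\lambda-A)f\in L^{2}(\pi)$, \cref{thm:res-eq} produces a unique paracontrolled $g\in\mathcal{D}^{\theta}_{2}$ with $(\lambda-\mathfrak{L})g=G$. By the first step $g\in\operatorname{dom}(A)$ and $(\lambda-A)g=G=(\lambda-A)f$; the injectivity of $\lambda-A$---which holds because $\lambda$ belongs to the resolvent set of the $C_{0}$-contraction semigroup generator $A$ obtained in \cref{cor:L2-spectral-gap}---then forces $f=g\in\mathcal{D}^{\theta}_{2}$, so $f\in D$. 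Combining the two inclusions gives $D=\operatorname{dom}(A)$ and $(\mathfrak{L},D)=(A,\operatorname{dom}(A))$. The final probabilistic assertion is then immediate: $(T_{t})$ is the transition semigroup of $X$ by \cref{cor:density} and \cref{rmk:FP-torus}, so $(\mathfrak{L},D)$ is the generator of the Markov process $X$ on $\mathbb{T}^{d}$.

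The hard part is the Feynman--Kac identity \eqref{eq:plan-fk}: the paracontrolled $g\in\mathcal{C}^{\alpha+\beta}_{2}(\mathbb{T}^{d})$ is too rough for a direct application of Itô's formula, and in the rough regime $\beta\leqslant(1-\alpha)/2$ it typically fails the $\mathcal{C}^{3}$ regularity required of the terminal datum in \cref{def:martp}, so neither a direct stochastic-calculus argument nor a direct plug-in into the martingale problem is available. The detour through smooth-drift approximation is therefore essential, and it is the joint continuity of the paracontrolled resolvent map and of the martingale solutions under smoothing---already built into \cref{thm:res-eq} and \cref{thm:mainthm1}---that carries the identification through.
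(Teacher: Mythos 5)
Your proposal is correct and takes the same overall route as the paper: both inclusions, with smooth approximation and Dynkin/Itô for $D\subset\operatorname{dom}(A)$, and the injectivity of $\lambda-A$ (i.e.\ uniqueness of the $A$-resolvent) to force $\operatorname{dom}(A)\subset D$. The one genuine variation is in the first inclusion: the paper stays in the time domain, establishing the finite-time identity $T_{t}f-f=\int_{0}^{t}T_{s}(\lambda f-G)\,ds$ from the approximated Dynkin formula and then differentiating at $t=0$ using strong continuity from \cref{cor:L2-spectral-gap}, whereas you go through the full Laplace-transform identification $g=\int_{0}^{\infty}e^{-\lambda t}T_{t}G\,dt=R(\lambda,A)G$. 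Both yield $Af=\lambda f-G=\mathfrak{L}f$; the paper's version is slightly more economical because it avoids controlling the infinite integral and the interplay of $L^{2}(\pi^{m})$ versus $L^{2}(\pi)$ along the drift approximation. On the other hand you are more explicit than the paper about needing to mollify \emph{both} $F$ and $G$ --- the paper writes $(f^{n})\subset C^{\infty}$ with $R_{\lambda}f^{n}=G^{n}$, which can only be literally true if the drift is also smoothed, and you correctly spell this out. One small caveat: ``stability of martingale solutions under drift approximation'' is not part of the \emph{statement} of \cref{thm:mainthm1}; it is established inside its proof and is invoked elsewhere in the paper (e.g.\ in the proof of \cref{cor:density}), so the appeal is legitimate but it would be cleaner to cite the continuity of the Fokker--Planck solution map from \cref{thm:ex-fp-d}, which directly gives convergence of the approximating densities and hence of $T^{m}_{t}$ to $T_{t}$. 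With that, your argument is sound.
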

\begin{remark}
Since the drift $F$ does not depend on a time variable, one could reformulate the martingale problem for $X$ in terms of the elliptic generator $\mathfrak{L}$ and the domain $D\subset L^{2}(\pi)$.
\end{remark}
\begin{proof}
We first show that $D\subset dom(A)$. To this aim, note that for $f\in D$, we obtain $R_{\lambda}f=G$ for $G\in L^{2}(\pi)$. For a mollification $(G^{n})\subset C^{\infty}(\mathbb{T}^{d})$ of $G$ and $(f^{n})\subset C^{\infty}(\mathbb{T}^{d})$, such that $R_{\lambda}f^{n}=G^{n}$, we obtain that in particular $f^{n}$ is a mild solution of the Kolmogorov backward equation on the torus for $\mathcal{G}=\partial_{t}+\mathfrak{L}$ with right-hand side $\lambda f^{n}-G^{n}\in L^{\infty}$  and terminal condition $f^{n}\in\calC^{3}$. Equivalently, its periodic version is the periodic solution of the Kolmogorov backward equation on $\R^{d}$.
As $X$ equals the projected solution of the $(\mathcal{G},x)$-martingale problem onto the torus, we have, for $n\in\N$ and $x\in\mathbb{T}^{d}$, that
\begin{align*}
T_{t}f^{n}(x)-f^{n}(x)&=\E_{X_{0}=x}[f^{n}(X_{t})-f^{n}(X_{0})]\\&=\E_{X_{0}=x}\bigg[\int_{0}^{t}(\lambda f^{n}-G^{n})(X_{s})ds\bigg]=\int_{0}^{t}T_{s}(\lambda f^{n}-G^{n})(x)ds.
\end{align*}
Using that $f^{n}\to f$ in $L^{2}(\pi)$ as $G^{n}\to G$ by continuity of the resolvent solution map, we obtain that for $f\in D$,
\begin{align*}
T_{t}f-f=\int_{0}^{t}T_{s}(\lambda f-G)ds.
\end{align*}
By continuity of the map $s\mapsto T_{s}(\lambda f-G)\in L^{2}(\pi)$, since  $T$ is strongly continuous on $L^{2}(\pi)$, we obtain that for $f\in D$, $\lim_{t\to 0} (T_{t}f-f)/t$ exists in $L^{2}(\pi)$ and
\begin{align*}
Af=\lambda f-G=\lambda f-R_{\lambda}f=\mathfrak{L}f.
\end{align*} 
To prove that also $dom(A)\subset D$, we use that for $\chi\in dom (A)$, there trivially exists $f\in L^{2}(\pi)$ with $A\chi=f$. Notice that  by \cref{thm:res-eq}, we can solve the resolvent equation for $\lambda>0$ large enough, 
\begin{align*}
R_{\lambda}\tilde{\chi}=\lambda\chi-f,
\end{align*} with right-hand side $\lambda\chi-f\in L^{2}(\pi)\subset \calC^{0}_{2}$, obtaining a solution $\tilde{\chi}\in D$. By the above, we have that $A_{\mid D}=\mathfrak{L}_{\mid D}$, such that $\mathfrak{L}\tilde{\chi}=A\tilde{\chi}$. This yields by inserting in the equation for $\tilde{\chi}$ and since $f=A\chi$, that $A(\tilde{\chi}-\chi)=\lambda(\tilde{\chi}-\chi)$. As $\lambda>0$, by uniqueness of the solution of the resolvent equation for the generator $A$, we obtain $\tilde{\chi}=\chi$. Thus with the equation for $\tilde{\chi}$ this yields $\chi\in D$ and $\mathfrak{L}\chi=f$. 
\end{proof}
\begin{corollary}\label{cor:good-P-eq}
Let $f\in L^{2}(\pi)$ with $\langle f\rangle_{\pi}=0$. Then there exists a unique solution $\chi\in D$ of the Poisson equation $\mathfrak{L}\chi=f$ such that $\langle\chi\rangle_{\pi}=0$. 
\end{corollary}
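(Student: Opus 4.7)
The plan is to construct $\chi$ via the semigroup integral and then identify it as an element of $D$ using the characterization of $\text{dom}(A)$ from \cref{thm:generator}.

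First I would define
\begin{align*}
\chi:=-\int_{0}^{\infty}T_{t}f\,dt,
\end{align*}
and argue that this Bochner integral converges absolutely in $L^{2}(\pi)$. Since $\langle f\rangle_{\pi}=0$, the $L^{2}(\pi)$-spectral gap of \cref{cor:L2-spectral-gap} gives $\norm{T_{t}f}_{L^{2}(\pi)}\leqslant Ke^{-\mu t}\norm{f}_{L^{2}(\pi)}$, which is integrable in $t$. Taking inner products against the constant function $1\in L^{2}(\pi)$ and using invariance of $\pi$ (so that $\langle T_{t}f\rangle_{\pi}=\langle f\rangle_{\pi}=0$ for every $t\geqslant 0$) yields $\langle\chi\rangle_{\pi}=0$.

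Next I would show $\chi\in\text{dom}(A)$ and $A\chi=f$. A direct change of variables in the semigroup identity gives
\begin{align*}
T_{h}\chi-\chi=-\int_{h}^{\infty}T_{t}f\,dt+\int_{0}^{\infty}T_{t}f\,dt=\int_{0}^{h}T_{t}f\,dt,
\end{align*}
so that by strong continuity of $(T_{t})$ on $L^{2}(\pi)$ (again \cref{cor:L2-spectral-gap}), $h^{-1}(T_{h}\chi-\chi)\to f$ in $L^{2}(\pi)$ as $h\downarrow 0$. Hence $\chi\in\text{dom}(A)$ and $A\chi=f$. By \cref{thm:generator} we have $\text{dom}(A)=D$ and $A=\mathfrak{L}$ on $D$, so $\chi\in D$ and $\mathfrak{L}\chi=f$, settling existence together with the paracontrolled structure built into $D$.

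For uniqueness, suppose $\chi_{1},\chi_{2}\in D$ both solve $\mathfrak{L}\chi_{i}=f$ with $\langle\chi_{i}\rangle_{\pi}=0$, and set $g:=\chi_{1}-\chi_{2}\in D=\text{dom}(A)$. Then $Ag=0$, and since $s\mapsto T_{s}g$ is differentiable in $L^{2}(\pi)$ with derivative $T_{s}Ag=0$, we conclude $T_{t}g=g$ for all $t\geqslant 0$. The $L^{2}(\pi)$-spectral gap applied to $g$ (which has mean zero) then gives $\norm{g}_{L^{2}(\pi)}=\norm{T_{t}g}_{L^{2}(\pi)}\leqslant Ke^{-\mu t}\norm{g}_{L^{2}(\pi)}\to 0$, so $g=0$.

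None of these steps poses a serious obstacle: the entire argument is a soft consequence of the strong continuity and spectral gap of $(T_{t})$ on $L^{2}(\pi)$ combined with the identification $D=\text{dom}(A)$. The only delicate point worth underlining is that one must invoke \cref{thm:generator} to convert the abstract statement $\chi\in\text{dom}(A)$ into the concrete paracontrolled regularity encoded in $D$; without this identification the statement of the corollary would not even make sense.
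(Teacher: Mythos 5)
Your proof is correct and follows essentially the same strategy as the paper: define $\chi$ via the semigroup integral, use the $L^{2}(\pi)$-spectral gap from \cref{cor:L2-spectral-gap} to get absolute convergence and decay of $T_{t}f$, and invoke \cref{thm:generator} to identify $\operatorname{dom}(A)=D$ and $A=\mathfrak{L}$ on $D$. Two minor improvements over the paper's own argument: you supply the correct sign (the paper writes $\chi=\int_{0}^{\infty}T_{t}f\,dt$, which in fact satisfies $\mathfrak{L}\chi=-f$ after passing to the limit in $T_{t}f-f=A\int_{0}^{t}T_{s}f\,ds$), and you explicitly verify the uniqueness claim via $T_{t}g=g$ and the spectral gap, a step the paper's proof omits.
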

\begin{proof}
This follows from the $L^{2}(\pi)$-spectral gap estimates. We can solve the Poisson equation in $L^{2}(\pi)$ for the given right-hand side $f\in L^{2}(\pi)$ with $\langle f\rangle_{\pi}=0$. The solution is explicitly given by $\chi=\int_{0}^{\infty}T_{t}fdt\in L^{2}(\pi)$.\\ We check that $\chi$ is indeed a solution. By \cite[Proposition 1.1.5 part a)]{Ethier1986}, we have that for $f\in L^{2}(\pi)$,  $\int_{0}^{t}T_{s}fds\in dom (A)$ and 
\begin{align*}
T_{t}f-f=A\int_{0}^{t}T_{s}fds,
\end{align*} where $(A,dom(A))$ denotes again the generator of $(T_{t})$ on $L^{2}(\pi)$. By the $L^{2}$-spectral gap estimates and $\langle f\rangle_{\pi}=0$, we obtain that $(\int_{0}^{t}T_{s}fds)_{t}$ converges in $L^{2}(\pi)$ for $t\to\infty$ to a limit $\chi$, and that $(T_{t}f)_{t}$ converges to zero in $L^{2}(\pi)$ for $t\to\infty$. Hence, since $A$ is a closed operator (cf.~\cite[Corollary 1.1.6]{Ethier1986}), we obtain in the limit $t\to\infty$, that $f=A\int_{0}^{\infty} T_{t}fdt=A\chi$ and $\chi\in dom(A)$. Now, using $dom(A)=D$ and $(A,D)=(\mathfrak{L},D)$ by \cref{thm:generator}, this yields $\chi\in D$ and $\mathfrak{L}\chi=f$.
\end{proof}
\noindent Thanks to the regularity of the density of the invariant measure $\pi$, we can finally define the mean of the singular drift $F$ under $\pi$, $\langle F\rangle_{\pi}=\langle F,\rho_{\infty}\rangle_{\lambda}$, respectively the product $F\cdot\rho_{\infty}$.

\begin{lemma}\label{thm:def-F-pi-int}
Let $\rho_{\infty}$ be the density of $\pi$. Let $\langle F\rangle_{\pi}=(\langle F^{i}\rangle_{\pi})_{i=1,...,d}$ for
\begin{align*}
\langle F^{i}\rangle_{\pi}&=(F^{i}\cdot\rho_{\infty})(\mathbf{1})\\&:=[(F^{i}\cdot \rho^{\sharp}_{\infty})+(F^{i}\reso I_{\infty} (\nabla\cdot F))\cdot\rho_{\infty}+C_{1}(\rho_{\infty},I_{\infty}(\nabla\cdot F),F^{i})](\mathbf{1}),
\end{align*} where $\mathbf{1}\in C^{\infty}(\mathbb{T}^{d})$ is the constant test function and $C_{1}$ denotes the paraproduct commutator defined in \eqref{eq:para-comm}.\\ Then, $\langle F^{i}\rangle_{\pi}$ is well-defined and continuous, that is, $\langle F^{m}\rangle_{\pi}\to\langle F\rangle_{\pi}$ for $F^{m}\to F$ in $\mathcal{X}^{\beta,\gamma}_{\infty}(\mathbb{T}^{d})$. Moreover, the following Lipschitz bound holds true
\begin{align*}
\norm{F\cdot\rho_{\infty}}_{\calC^{\beta}(\mathbb{T}^{d})}\lesssim\norm{F}_{\mathcal{X}^{\beta,\gamma}_{\infty}(\mathbb{T}^{d})}(1+\norm{F}_{\mathcal{X}^{\beta,\gamma}_{\infty}(\mathbb{T}^{d})})[\norm{\rho_{\infty}}_{\calC^{\alpha+\beta-1}}+\norm{\rho_{\infty}^{\sharp}}_{\calC^{2(\alpha+\beta-1)}}].
\end{align*}
\end{lemma}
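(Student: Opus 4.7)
The plan is to show that the product $F^{i}\cdot \rho_{\infty}$ is well-defined as an element of $\calC^{\beta}(\mathbb{T}^{d})$ by decomposing it à la Bony and using the paracontrolled structure of $\rho_{\infty}$ given by \cref{cor:inv-d}. Evaluating the resulting distribution on the constant test function $\mathbf{1}$ then yields $\langle F^{i}\rangle_{\pi}$, and the Lipschitz bound translates automatically into the continuity statement.

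First I would write
\begin{align*}
F^{i}\cdot\rho_{\infty}=F^{i}\para\rho_{\infty}+F^{i}\arap\rho_{\infty}+F^{i}\reso\rho_{\infty},
\end{align*}
and observe that the two paraproducts are always well-defined and controlled in $\calC^{\beta}(\mathbb{T}^{d})$ by $\norm{F^{i}}_{\calC^{\beta}}\norm{\rho_{\infty}}_{L^{\infty}}$, respectively $\norm{F^{i}}_{\calC^{\beta}}\norm{\rho_{\infty}}_{\calC^{\alpha+\beta-1}}$, thanks to the paraproduct estimates~\eqref{eq:paraproduct-estimates} (noting $\rho_{\infty}\in\calC^{\alpha+\beta-1}$ with $\alpha+\beta-1>0$, hence $\rho_{\infty}\in L^{\infty}$). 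The only delicate piece is the resonant product $F^{i}\reso\rho_{\infty}$, which is not directly covered by paraproduct estimates in the rough regime $\beta\leqslant(1-\alpha)/2$.

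To handle the resonant term, I would insert the paracontrolled decomposition $\rho_{\infty}=\rho_{\infty}^{\sharp}+\rho_{\infty}\para I_{\infty}(\nabla\cdot F)$ from \cref{cor:inv-d}, which gives
\begin{align*}
F^{i}\reso\rho_{\infty}=F^{i}\reso\rho_{\infty}^{\sharp}+F^{i}\reso\paren[\big]{\rho_{\infty}\para I_{\infty}(\nabla\cdot F)}.
\end{align*}
For the first summand, $F^{i}\in\calC^{\beta}$ and $\rho_{\infty}^{\sharp}\in\calC^{2(\alpha+\beta-1)}$ have regularities summing to $3\beta+2\alpha-2>0$ exactly under the assumption $\beta>(2-2\alpha)/3$, so the resonant product is well-defined by~\eqref{eq:paraproduct-estimates}. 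For the second summand, I would apply the paraproduct commutator identity
\begin{align*}
F^{i}\reso\paren[\big]{\rho_{\infty}\para I_{\infty}(\nabla\cdot F)}=\rho_{\infty}\paren[\big]{F^{i}\reso I_{\infty}(\nabla\cdot F)}+C_{1}\paren[\big]{\rho_{\infty},I_{\infty}(\nabla\cdot F),F^{i}},
\end{align*}
with $C_{1}$ as in~\eqref{eq:para-comm}. The commutator $C_{1}$ is bounded in $\calC^{\beta}$ by the standard commutator estimate (cf.~\cite[Lemma 2.4]{Gubinelli2015Paracontrolled}), using the regularities of the three arguments whose total regularity $\alpha+\beta-1+(\alpha+\beta-1)+\beta=3\beta+2\alpha-2>0$ is again admissible. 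The product $\rho_{\infty}\cdot\bigl(F^{i}\reso I_{\infty}(\nabla\cdot F)\bigr)$ is well-defined provided the factor in parentheses is a distribution of regularity $2\beta+\alpha-1$.

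The crux of the argument, and the step I would treat most carefully, is showing that $F^{i}\reso I_{\infty}(\nabla\cdot F)\in\calC^{2\beta+\alpha-1}$ depends continuously on the enhancement. Writing $I_{\infty}(\nabla\cdot F)=\int_{0}^{\infty}P_{s}(\nabla\cdot F)\diff s$ and using that $\nabla\cdot F$ has vanishing zero-th Fourier mode, I would split
\begin{align*}
F^{i}\reso I_{\infty}(\partial_{j}F^{j})=\int_{0}^{1}\paren[\big]{P_{s}(\partial_{j}F^{j})\reso F^{i}}\diff s+\int_{1}^{\infty}\paren[\big]{P_{s}(\partial_{j}F^{j})\reso F^{i}}\diff s.
\end{align*}
On $[0,1]$ the integrand is controlled in $\calC^{2\beta+\alpha-1}$ by $s^{-\gamma}\norm{F}_{\calX^{\beta,\gamma}_{\infty}}$ from the very definition of $\calX^{\beta,\gamma}_{\infty}$, and $\gamma<1$ makes this integrable. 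On $[1,\infty)$ I would use the exponential Schauder estimate \cref{lem:exp-schauder} (writing $P_{s}=P_{s-1/2}P_{1/2}$ and paying the $\calC^{\beta}$-norm of $P_{1/2}(\partial_{j}F^{j})$) to get exponential decay $e^{-cs}$, which is integrable and also gives continuity in $F\in\calX^{\beta,\gamma}_{\infty}$. Collecting the four resulting terms and applying the paraproduct estimates~\eqref{eq:paraproduct-estimates} once more yields exactly the claimed Lipschitz bound, and bilinearity/trilinearity of each construction (in $F$ and $\rho_{\infty}$, respectively in $F$, $I_{\infty}(\nabla\cdot F)$ and $F$) delivers continuity $F^{m}\to F$ in $\calX^{\beta,\gamma}_{\infty}$, since by continuity of the invariant-measure construction (\cref{cor:inv-d}) the corresponding densities converge in the paracontrolled norm $\calC^{\alpha+\beta-1}\times\calC^{2(\alpha+\beta-1)}$.
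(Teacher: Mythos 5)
Your proof is correct and follows the route the paper intends: the paper's own proof is just the one‑line remark that the result ``follows directly from \cref{thm:ex-fp-d} and \cref{cor:inv-d}'', and what you write out is precisely the expansion of that remark — Bony‑decomposing $F^i\cdot\rho_\infty$, inserting the paracontrolled structure of $\rho_\infty$ from \cref{cor:inv-d}, isolating the resonant term via the commutator $C_1$, and defining $F^i\reso I_\infty(\nabla\cdot F)$ by splitting the time integral and using the enhancement near $0$ together with the exponential Schauder estimate (\cref{lem:exp-schauder}) for the tail, exactly as the paper does inside \cref{thm:ex-fp-d} and \cref{cor:inv-d}. The only caveat worth flagging is that continuity of $\langle F\rangle_\pi$ in $F$ also relies on the stability $\rho_\infty^m\to\rho_\infty$ (and $\rho_\infty^{m,\sharp}\to\rho_\infty^\sharp$) in the paracontrolled norm, which \cref{cor:inv-d} only proves implicitly via the Fokker–Planck solution map — a point that is equally elided in the paper's one‑line proof, so you are no less rigorous than the original.
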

\begin{proof}
The proof follows directly from \cref{thm:ex-fp-d} and \cref{cor:inv-d}. 
\end{proof}
\end{section}

\begin{section}{Solving the Poisson equation with singular right-hand side}\label{sec:s-P-eq}

\noindent To prove the central limit theorem for the solution of the martingale problem $X$, we utilize the classical approach of decomposing the additive functional in terms of a martingale and a boundary term, using the solution of the Poisson equation for $\mathfrak{L}$ with singular right-hand side $F-\langle F\rangle_{\pi}$. For solving the Poisson equation in \cref{thm:poisson-eq} below, \cref{cor:good-P-eq} is not applicable, as $F$ is a distribution and therefore not an element of $L^{2}(\pi)$. Consider an approximation $(F^{m})\subset C^{\infty}(\mathbb{T}^{d})$ with $F^{m}\to F$ in $\mathcal{X}^{\beta,\gamma}_{\infty}(\mathbb{T}^{d})$. Then, we can apply \cref{cor:good-P-eq} for the right-hand sides $F^{m}-\langle F^{m}\rangle_{\pi}\in L^{2}(\pi)$, $m\in\N$. This way we obtain solutions $\chi^{m}=(\chi^{m,i})_{i=1,...,d}\in D^{d}\subset L^{2}(\pi)^{d}$ of the Poisson equations 
\begin{align}\label{eq:Poisson-m}
(-\mathfrak{L})\chi^{m,i}=F^{m,i}-\langle F^{m,i}\rangle_{\pi}
\end{align} for $m\in\N$.\\ 
In this section, we show that the sequence $(\chi^{m})_{m}$ converges in a space of sufficient regularity to a the limit $\chi$ that indeed solves the Poisson equation 
\begin{align}\label{eq:Poisson-limit}
(-\mathfrak{L})\chi=F-\langle F\rangle_{\pi}.
\end{align}
Let us define the space $\mathcal{H}^{1}(\pi)$ as in \cite[Section 2.2]{klo},
\begin{align}
\mathcal{H}^{1}(\pi):=\{f\in D\mid  \norm{f}_{\mathcal{H}^{1}(\pi)}^{2}:=\langle(-\mathfrak{L})f,f\rangle_{\pi}<\infty\},
\end{align} which is the Sobolev space for the operator $\mathfrak{L}$ with respect to $L^{2}(\pi)$. Its dual is defined by
\begin{align}
\mathcal{H}^{-1}(\pi):=\{F:\mathcal{H}^{1}(\pi)\to\R\mid F\text{ linear with } \norm{F}_{\mathcal{H}^{-1}(\pi)}:=\sup_{\norm{f}_{\mathcal{H}^{1}(\pi)}=1}\abs{F(f)}<\infty\}.
\end{align}
The space $\mathcal{H}^{1}(\pi)$ is related to the quadratic variation of Dynkin's martingale, see \cite[Section 2.4]{klo}, which motivates the definition.\\ 
To prove convergence of $(\chi^{m})_{m}$ in $L^{2}(\pi)^{d}$, we first establish in \cref{cor:H1-conv} convergence of $(\chi^{m})_{m}$ in the space $\mathcal{H}^{1}(\pi)^{d}$ and utilize a Poincaré-type bound on the operator $\mathfrak{L}$. A standard argument as in \cite[Property 2.4]{Guionnet02} shows that the $L^{2}(\pi)$-spectral gap estimates from \cref{cor:L2-spectral-gap} for the constant $K=1$, imply the Poincaré estimate for the operator $\mathfrak{L}$:
\begin{align*}
\norm{f-\langle f\rangle_{\pi}}_{L^{2}(\pi)}^{2}\leqslant \mu \langle (-\mathfrak{L})f,f\rangle_{\pi}=\mu \norm{f}_{\mathcal{H}^{1}(\pi)}^{2}, \quad\text{ for all } f\in D.
\end{align*} 
In general, the constant $K>0$ in the spectral gap estimates from \cref{cor:L2-spectral-gap} does not need to satisfy $K=1$ and the above argument breaks down for $K\neq 1$.
Hence, we show below in \eqref{eq:poincare} that $\norm{f-\langle f\rangle_{\pi}}_{L^{2}(\pi)}^{2}\leqslant C \norm{f}_{\mathcal{H}^{1}(\pi)}^{2}$ holds true for some constant $C>0$. That constant may differ from the constant $\mu$ and may not be optimal, but the bound suffices for our purpose of concluding on $L^{2}(\pi)^{d}$ convergence given $\mathcal{H}^{1}(\pi)^{d}$ convergence of $(\chi^{m})_{m}$.\\ An optimal estimate, that however applies for a much more general situation of weak Poincaré inequalities and slower than exponential convergences, can be found in \cite[Theorem 2.3]{RoecknerWang}.\\
The $\mathcal{H}^{1}(\pi)^{d}$ convergence of $(\chi^{m})_{m}$ follows from $\mathcal{H}^{-1}(\pi)^{d}$-convergence of $(F^{m})_{m}$ for the approximating sequence $F^{m}\to F$ in $\mathcal{X}^{\beta,\gamma}_{\infty}(\mathbb{T}^{d})$. Convergence of $(F^{m})_{m}$ in $\mathcal{H}^{-1}(\pi)^{d}$ is established in \cref{thm:dual}. The following lemma is an auxiliary result, which proves that the semi-norms in $\mathcal{H}^{1}(\pi)$ and the homogeneous Besov space $\dot{B}^{\alpha/2}_{2,2}(\mathbb{T}^{d})$, cf.~\eqref{eq:p-bs2},  are equivalent. 

\begin{lemma}\label{lem:cdc-fL}
Let $\alpha\in (1,2]$. 
Define the carré-du-champ operator of the generalized fractional Laplacian as $\Gamma^{\alpha}_{\nu}(f)=\Gamma^{\alpha}_{\nu}(f,f):=\frac{1}{2}((-\La) f^{2}-2f(-\La) f)$. 
Then, there exist constants $c,C>0$, such that for all $f\in \dot{B}^{\alpha/2}_{2,2}(\mathbb{T}^{d})$,
\begin{align}\label{eq:eq-norms}
c\norm{f}_{\dot{B}^{\alpha/2}_{2,2}(\mathbb{T}^{d})}^{2}\leqslant\langle \Gamma^{\alpha}_{\nu}(f)\rangle_{\lambda}\leqslant C \norm{f}_{\dot{B}^{\alpha/2}_{2,2}(\mathbb{T}^{d})}^{2}.
\end{align} 
\end{lemma}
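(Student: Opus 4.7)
The plan is to reduce both sides of \eqref{eq:eq-norms} to Fourier multipliers on $\mathbb{T}^{d}$ and compare the resulting symbols.

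First I would rewrite the carré-du-champ using that $\psi^{\alpha}_{\nu}(0) = 0$ forces $\int_{\mathbb{T}^{d}}\La(f^{2})\,dx = 0$, so after integration the first term of $\Gamma^{\alpha}_{\nu}(f)$ drops out and
\[
\langle \Gamma^{\alpha}_{\nu}(f)\rangle_{\lambda} \;=\; \langle f,\La f\rangle_{\lambda} \;=\; \sum_{k \in \mathbb{Z}^{d}\setminus\{0\}} \psi^{\alpha}_{\nu}(k)\,|\hat f(k)|^{2}
\]
by Plancherel together with the Fourier representation in \cref{def:fl}. For $\alpha = 2$, a direct expansion using $\La = -\tfrac{1}{2}\Delta$ and the product rule gives $\Gamma^{\alpha}_{\nu}(f) = \tfrac{1}{2}|\nabla f|^{2}$, which by the convention $\dot{B}^{1}_{2,2}(\mathbb{T}^{d}) = \dot H^{1}(\mathbb{T}^{d})$ yields \eqref{eq:eq-norms} immediately with $c = C = \tfrac{1}{2}$.

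For $\alpha \in (1,2)$ the strategy is to combine the Fourier expression above with the standard identification of the homogeneous Besov semi-norm via the Fourier side,
\[
\|f\|_{\dot{B}^{\alpha/2}_{2,2}(\mathbb{T}^{d})}^{2} \;\asymp\; \sum_{k \in \mathbb{Z}^{d}\setminus\{0\}} |k|^{\alpha}\,|\hat f(k)|^{2},
\]
which for $\alpha/2 \in (1/2,1)$ is the equivalence between the integral-difference norm in \eqref{eq:p-bs2} and the Bessel-potential norm of $\dot H^{\alpha/2}(\mathbb{T}^{d})$ (see \cite[Section 3.5]{Schmeisser1987}; it can also be verified directly by inserting the Fourier series into \eqref{eq:p-bs2} and computing $\int_{\mathbb{T}^{d}}|h|^{-\alpha-d}\sin^{2}(\pi k\cdot h)\,dh \asymp |k|^{\alpha}$ via the rescaling $h \mapsto h/|k|$ and dominated convergence, using $\alpha \in (1,2)$ to ensure integrability at the origin). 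Given this reduction, it remains only to prove the two-sided pointwise bound
\[
c\,|k|^{\alpha} \;\le\; \psi^{\alpha}_{\nu}(k) \;\le\; C\,|k|^{\alpha} \qquad\text{for all } k \in \R^{d}\setminus\{0\}.
\]

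The upper bound is immediate from $|\langle k,\xi\rangle| \le |k|$ for $\xi \in S$, yielding $\psi^{\alpha}_{\nu}(k) \le \nu(S)\,|k|^{\alpha}$. The lower bound is the one step where \cref{ass} enters in an essential way: by $\alpha$-homogeneity, $\psi^{\alpha}_{\nu}(k)/|k|^{\alpha} = g(k/|k|)$ with $g(\eta) := \int_{S}|\langle\eta,\xi\rangle|^{\alpha}\,\nu(d\xi)$ continuous on the compact sphere $S$. If $g(\eta_{0}) = 0$ for some $\eta_{0}$, then $\nu$ would be concentrated on the hyperplane $\eta_{0}^{\perp}$, contradicting that $\mathrm{span}(\mathrm{supp}(\nu)) = \R^{d}$; hence $g$ is strictly positive on $S$ and, by compactness, bounded below by some $c > 0$. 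Combining this with the two Fourier equivalences established above proves \eqref{eq:eq-norms}. The only real obstacle is this lower bound on $\psi^{\alpha}_{\nu}$, which genuinely requires the non-degeneracy of $\nu$ from \cref{ass}: without it, the Dirichlet form $\langle f, \La f\rangle_{\lambda}$ could vanish on non-constant modes in the degenerate directions and no Besov-type lower bound could possibly hold.
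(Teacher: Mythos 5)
Your proof is correct and follows essentially the same route as the paper's: both sides are reduced to Fourier multipliers, the equivalence $\dot B^{\alpha/2}_{2,2}(\mathbb{T}^{d})=\dot H^{\alpha/2}(\mathbb{T}^{d})$ is invoked, and the key comparison $c|k|^{\alpha}\le\psi^{\alpha}_{\nu}(k)\le C|k|^{\alpha}$ (whose lower bound you justify a bit more explicitly via compactness of $S$ and \cref{ass}) finishes the argument.
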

\begin{proof}
By \cite[part (v) of Theorem, Section 3.5.4]{Schmeisser1987} we obtain that the periodic Lizorkin space $F_{2,2}^{s}(\mathbb{T}^{d})$ coincides with the periodic Bessel-potential space $H^{s}(\mathbb{T}^{d})$. Furthermore $F_{2,2}^{s}(\mathbb{T}^{d})$ coincides with $B_{2,2}^{s}(\mathbb{T}^{d})$ (cf. \cite[Section 3.5.1, Remark 4]{Schmeisser1987}). Thus, we obtain that in particular 
\begin{align*}
\dot{B}^{s}_{2,2}(\mathbb{T}^{d})=\dot{H}^{s}(\mathbb{T}^{d}).
\end{align*} 
It remains to show \eqref{eq:eq-norms} with $\dot{B}^{\alpha/2}_{2,2}(\mathbb{T}^{d})$ replaced by $\dot{H}^{s}(\mathbb{T}^{d})$. We prove the claim for $\alpha\in (1,2)$, for $\alpha=2$ the proof is similar. To that aim, we calculate, using the definition of $\La$ for a Schwartz function $f\in \mathcal{S}(\mathbb{T}^{d})$ and $\psi^{\alpha}_{\nu}(0)=0$,
\begin{align*}
\langle\Gamma^{\alpha}_{\nu}(f)\rangle_{\lambda}=\int_{\mathbb{T}^{d}} \Gamma^{\alpha}_{\nu}(f) (x)dx&=\F_{\mathbb{T}^{d}}(\Gamma^{\alpha}_{\nu}(f))(0)\\&=\frac{1}{2}\F_{\mathbb{T}^{d}}((-\La) f^{2})(0)-\F_{\mathbb{T}^{d}}(f(-\La) f)(0)
\\&=-\frac{1}{2}\psi^{\alpha}_{\nu}(0)(\hat{f}\ast\hat{f})(0)+(\hat{f}\ast\psi^{\alpha}_{\nu}\hat{f})(0)
\\&=\sum_{k\in\mathbb{Z}^{d}}\hat{f}(-k)\hat{f}(k)\psi^{\alpha}_{\nu}(k)\\&=\sum_{k\in\mathbb{Z}^{d}}\abs{\hat{f}(k)}^{2}\psi^{\alpha}_{\nu}(k).
\end{align*}
By \cref{ass} on the spherical component of the jump measure $\nu$, we obtain, that there exist constants $c,C>0$ with
\begin{align*}
c\abs{k}^{\alpha}\leqslant\psi^{\alpha}_{\nu}(k)=\int_{S}\abs{\langle k,\xi\rangle}^{\alpha}\nu(d\xi)\leqslant C\abs{k}^{\alpha}.
\end{align*} Thus it follows that
\begin{align*}
c\norm{f}_{\dot{H}^{\alpha/2}(\mathbb{T}^{d})}^{2}=c\sum_{k\in\mathbb{Z}^{d}}\abs{k}^{\alpha}\abs{\hat{f}(k)}^{2}\leqslant\langle\Gamma^{\alpha}_{\nu}(f)\rangle_{\lambda}\leqslant C\sum_{k\in\mathbb{Z}^{d}}\abs{k}^{\alpha}\abs{\hat{f}(k)}^{2}=C\norm{f}_{\dot{H}^{\alpha/2}(\mathbb{T}^{d})}^{2}.
\end{align*}
By a density argument, the claim follows for all $f\in \dot{H}^{\alpha/2}(\mathbb{T}^{d})=\dot{B}^{\alpha/2}_{2,2}(\mathbb{T}^{d})$.

\end{proof}
\begin{theorem}\label{thm:dual}
Let $F\in\mathcal{X}^{\beta,\gamma}_{\infty}(\mathbb{T}^{d})$ for $\beta\in (\frac{2-2\alpha}{3},0)$ and $\alpha\in (1,2]$.\\
Then, equivalence of the semi-norms $\norm{\cdot}_{\mathcal{H}^{1}(\pi)}\simeq \norm{\cdot}_{\dot{B}^{\alpha/2}_{2,2}(\mathbb{T}^{d})}$ follows and $\overline{F}:=F-\langle F\rangle_{\pi}\in\mathcal{H}^{-1}(\pi)^{d}$. In particular, $F^{m}\to F$ in $\mathcal{X}^{\beta,\gamma}_{\infty}(\mathbb{T}^{d})$ implies $\overline{F}^{m}\to\overline{F}$ in $\mathcal{H}^{-1}(\pi)^{d}$. 
\end{theorem}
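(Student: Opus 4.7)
My plan is to verify the three claims sequentially, the core being a carré-du-champ identity for $\mathfrak{L}$ with respect to $\pi$. For $f\in \mathcal{H}^1(\pi)$, I compute $\mathfrak{L}(f^2) - 2f\mathfrak{L}f$: the drift part cancels via the classical Leibniz rule $F\cdot\nabla(f^2) = 2f F\cdot\nabla f$, leaving
\[
\mathfrak{L}(f^2) - 2f\mathfrak{L}f = -\La(f^2) + 2f\La f = 2\Gamma^\alpha_\nu(f).
\]
Invariance of $\pi$ gives $\langle\mathfrak{L}(f^2)\rangle_\pi = 0$, so $\langle(-\mathfrak{L})f,f\rangle_\pi = \langle\Gamma^\alpha_\nu(f)\rangle_\pi$. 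Since $\rho_\infty$ is strictly positive and bounded (\cref{cor:inv-d} and continuity on the compact torus), $\langle\Gamma^\alpha_\nu(f)\rangle_\pi\simeq\langle\Gamma^\alpha_\nu(f)\rangle_\lambda\simeq\norm{f}^2_{\dot{B}^{\alpha/2}_{2,2}(\mathbb{T}^d)}$ by \cref{lem:cdc-fL}, yielding the seminorm equivalence.

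Next, the identity $\dot{H}^{\alpha/2}(\mathbb{T}^d) = \dot{B}^{\alpha/2}_{2,2}(\mathbb{T}^d)$ (shown in the proof of \cref{lem:cdc-fL}), the elementary torus Poincaré $\norm{g-\langle g\rangle_\lambda}_{L^2(\lambda)}\leq\norm{g}_{\dot{H}^{\alpha/2}}$ (immediate from Parseval and $\abs{k}\geq 1$ for $k\in\mathbb{Z}^d\setminus\{0\}$), and the minimality of $\langle f\rangle_\pi$ for $c\mapsto\norm{f-c}_{L^2(\pi)}$ together give
\[
\norm{f - \langle f\rangle_\pi}_{L^2(\pi)}\leq\norm{f - \langle f\rangle_\lambda}_{L^2(\pi)}\lesssim\norm{f - \langle f\rangle_\lambda}_{L^2(\lambda)}\lesssim\norm{f}_{\dot H^{\alpha/2}}\simeq\norm{f}_{\mathcal{H}^1(\pi)}.
\]
I then define the pairing
\[
\overline{F}^i(f) := \int F^i(f - \langle f\rangle_\pi)\,d\pi = (F^i\rho_\infty)(f - \langle f\rangle_\pi),
\]
which automatically vanishes on constants. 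Under our assumption $\beta > (2-2\alpha)/3 > -\alpha/2$ (the latter since $(4-\alpha)/6 > 0$ for $\alpha\leq 2$), I pick $\epsilon > 0$ with $\beta-\epsilon > -\alpha/2$ and use the Besov embeddings $\calC^\beta(\mathbb{T}^d)\hookrightarrow B^{\beta-\epsilon}_{2,2}(\mathbb{T}^d)\hookrightarrow B^{-\alpha/2}_{2,2}(\mathbb{T}^d)$, combined with $F^i\rho_\infty\in\calC^\beta(\mathbb{T}^d)$ from \cref{thm:def-F-pi-int}, the duality \cref{lem:duality}, and the Poincaré bound just established, to obtain
\[
\abs{\overline{F}^i(f)}\lesssim\norm{F^i\rho_\infty}_{\calC^\beta}\norm{f - \langle f\rangle_\pi}_{B^{\alpha/2}_{2,2}}\lesssim\norm{F^i\rho_\infty}_{\calC^\beta}\norm{f}_{\mathcal{H}^1(\pi)}.
\]
The convergence $\overline{F}^m\to\overline{F}$ in $\mathcal{H}^{-1}(\pi)^d$ then follows from the Lipschitz bound in \cref{thm:def-F-pi-int} applied to $F^m - F$, which gives $\norm{(F^m-F)\rho_\infty}_{\calC^\beta}\to 0$.

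The main obstacle will be the rigorous justification of the carré-du-champ identity for paracontrolled $f\in D$, where $f^2$ and $\mathfrak{L}(f^2)$ are not \emph{a priori} classically defined. My plan is to approximate $F$ by smooth $F^m\in C^\infty_b(\mathbb{T}^d)$ with $F^m\to F$ in $\mathcal{X}^{\beta,\gamma}_\infty(\mathbb{T}^d)$ (possible by the definition of $\mathcal{X}^{\beta,\gamma}_\infty$) and $f\in D$ by smooth $f^m$ obtained from $R^m_\lambda f^m = G^m$ with $G^m\in C^\infty(\mathbb{T}^d)\to G$ in $L^2(\pi)$, using the continuity of the resolvent solution map from \cref{thm:res-eq}. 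The density $\rho^m_\infty$ of the invariant measure for $F^m$ converges to $\rho_\infty$ via the continuity in \cref{thm:ex-fp-d} (exactly as in the proof of \cref{cor:inv-d}). For smooth data the identity is classical, and I pass to the limit in the seminorm equivalence using these continuity statements.
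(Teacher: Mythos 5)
Your proposal follows the same overall architecture as the paper's proof: carré-du-champ identity for $\mathfrak{L}$, invariance of $\pi$ to kill $\langle\mathfrak{L}f^2\rangle_\pi$, the norm equivalence from \cref{lem:cdc-fL} plus strict positivity of $\rho_\infty$, and then a duality estimate against $\overline{F}^i\rho_\infty\in\calC^{\beta}$ via \cref{lem:duality} and \cref{thm:def-F-pi-int}. There are two genuine points of divergence worth noting.

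First, the paper justifies that $\mathfrak{L}f^2$ is well-defined and in $L^1(\pi)$ for $f\in D$ by an explicit paracontrolled computation: it writes $f^2 = g^\sharp + g'\para I_\lambda(F)$ with $g^\sharp\in\calC^{2\theta-1}_1$, $g'\in(\calC^{\theta-1}_1)^d$, so that the singular product $F\cdot\nabla f^2$ makes sense directly. You instead propose to approximate $(F,G,\rho_\infty)$ by smooth data, use the classical carré-du-champ for $F^m$, and pass to the limit in the seminorm equivalence. This can be made to work using the continuity of the resolvent solution map (\cref{thm:res-eq}) and the Fokker--Planck solution map (\cref{thm:ex-fp-d}, used as in \cref{cor:inv-d}), since both $\langle(-\mathfrak{L}^m)f^m,f^m\rangle_{\pi^m} = \langle G^m-\lambda f^m,f^m\rangle_{\pi^m}$ and the constants in the $\dot B^{\alpha/2}_{2,2}$ equivalence are then controlled uniformly; but you would need to verify explicitly that $\langle G^m-\lambda f^m,f^m\rangle_{\pi^m}\to\langle G-\lambda f,f\rangle_{\pi}$, a step you flag but do not carry out. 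The paper's direct computation avoids this limiting step and is somewhat tighter.

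Second, for the $\mathcal{H}^{-1}(\pi)$ bound you route through the inhomogeneous norm $\|f-\langle f\rangle_\pi\|_{B^{\alpha/2}_{2,2}}$ and then invoke the torus Poincaré inequality. The paper instead observes that $\overline{F}^i\rho_\infty$ has vanishing zero Fourier mode (since $\langle\overline{F}^i\rangle_\pi=0$), so one can pair directly against $f-\langle f\rangle_\lambda$ and stay entirely in homogeneous Besov norms, giving $|\langle\overline{F}^i,f\rangle_\pi|\lesssim\|\overline{F}^i\rho_\infty\|_{\dot B^{\beta}_{2,2}}\|f\|_{\dot B^{\alpha/2}_{2,2}}\simeq\|\overline{F}^i\rho_\infty\|_{\dot B^{\beta}_{2,2}}\|f\|_{\mathcal{H}^1(\pi)}$ with no Poincaré step. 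Your detour is correct but introduces an extra bound the paper reserves for \cref{cor:H1-conv}.
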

\begin{proof}
By invariance of $\pi$ we obtain $\langle \mathfrak{L}g\rangle_{\pi}=0$ for $g\in D$, because for $g\in D$, $(\frac{d}{dt}T_{t})_{\mid t=0}f=\mathfrak{L}f\in L^{2}(\pi)$. We now apply this for $g=f^{2}$ for which we need to check that if $f\in D$, then $\mathfrak{L}f^{2}$ is well-defined and $\mathfrak{L}f^{2}\in L^{1}(\pi)$. This follows by calculating
\begin{align*}
f^{2}=(f^{\sharp}+\nabla f\para I_{\lambda}(F))^{2}=g^{\sharp}+g^{\prime}\para I_{\lambda}(F),
\end{align*} where 
\begin{align*}
g^{\sharp}&=(f^{\sharp})^{2}+2f^{\sharp}\reso(\nabla f\para I_{\lambda}(F))+2f^{\sharp}\arap(\nabla f\para I_{\lambda}(F))\\&\qquad\qquad\qquad+(\nabla f\para I_{\lambda}(F))\reso (\nabla f\para I_{\lambda}(F))\in\calC^{2\theta-1}_{1}(\mathbb{T}^{d})
\end{align*} and 
\begin{align*}
g^{\prime}=2f^{\sharp}\para \nabla f+\nabla f\para I_{\lambda}(F)\para \nabla f+I_{\lambda}(F)\para\nabla f\para I_{\lambda}(F)\in(\calC^{\theta-1}_{1}(\mathbb{T}^{d}))^{d}.
\end{align*}
Hence, we conclude that for $f\in D$, $f^{2}$ admits a paracontrolled structure with $g^{\sharp}\in\calC^{2\theta-1}_{1}(\mathbb{T}^{d})$ and $g^{\prime}\in(\calC^{\theta-1}_{1}(\mathbb{T}^{d}))^{d}$, such that $\mathfrak{L}f^{2}$ is well-defined and 
\begin{align*}
\mathfrak{L}f^{2}=2f \mathfrak{L}f+2\Gamma^{\alpha}_{\nu}(f)=2\lambda f^{2}-2fR_{\lambda}f+2\Gamma^{\alpha}_{\nu}(f)\in L^{1}(\pi).
\end{align*} 
Herein we used that $2\lambda f^{2}-2fR_{\lambda}f\in L^{1}(\pi)$ as $f\in D$ and $\Gamma^{\alpha}_{\nu}(f)=\Gamma^{\alpha}_{\nu}(f,f)=\frac{1}{2}(\La f^{2}-2f\La f)\in L^{1}(\pi)$ for $f\in\calC^{\theta}_{2}(\mathbb{T}^{d})$ by \cref{lem:cdc-fL} as $\theta$ can be chosen close to $\alpha+\beta$, such that $\theta>\alpha/2$.\\ Analogously, if we denote the domain of $\mathfrak{L}$ with integrability $p$ by $D_{p}$, then for $f,g\in D_{2}$, we concluded that $f\cdot g\in D_{1}$, which in particular implies that the carré-du-champ operator 
\begin{align*}
\Gamma^{\mathfrak{L}}(f,g)=\frac{1}{2}(\mathfrak{L}(fg)-f\mathfrak{L}g-g\mathfrak{L}f)\in L^{1}(\pi)
\end{align*} for $f,g\in D$ is well-defined in $L^{1}(\pi)$.\\
Applying invariance of $\pi$ for $g=f^{2}$, we can add $\frac{1}{2}\langle\mathfrak{L}f^{2}\rangle_{\pi}=0$ yielding  
\begin{align*}
\norm{f}_{\mathcal{H}^{1}(\pi)}^{2}=\langle (-\mathfrak{L})f,f\rangle_{\pi}=\langle\Gamma^{\mathfrak{L}}(f)\rangle_{\pi}=\langle \Gamma^{\alpha}_{\nu}(f)\rangle_{\pi}.
\end{align*} where $\Gamma^{\mathfrak{L}}(f)=\frac{1}{2}\mathfrak{L}f^{2}-f\mathfrak{L}f=\Gamma^{\alpha}_{\nu}(f)$.
Thus, we obtain
\begin{align}\label{eq:H1-spaces}
\norm{f}_{\mathcal{H}^{1}(\pi)}^{2}=\langle\Gamma^{\alpha}_{\nu}(f)\rangle_{\pi}\simeq\langle\Gamma^{\alpha}_{\nu}(f)\rangle_{\lambda}\simeq\norm{f}_{\dot{B}^{\alpha/2}_{2,2}}^{2},
\end{align} where $\simeq$ denotes that the norms are equivalent.\\ Here, we used that absolute continuity of $\pi$ with respect to the Lebesgue-measure, with density $\rho_{\infty}$ that is uniformly bounded from above and from below, away from zero by \cref{cor:inv-d}. Moreover, note that the carré-du-champ is non-negative, $\Gamma^{\alpha}_{\nu}(f)\geqslant 0$. Furthermore we utilized \eqref{eq:eq-norms} from \cref{lem:cdc-fL}.\\ 
Thus applying the duality estimate from \cref{lem:duality} (for functions $f-\langle f\rangle_{\lambda}, g-\langle g\rangle_{\lambda}$ to obtain the result for the homogeneous Besov spaces),
we get for $\overline{F}:=F-\langle F\rangle_{\pi}$ with mean $\langle F\rangle_{\pi}$ from \cref{thm:def-F-pi-int},
\begin{align*}
\abs{\langle \overline{F}^{i}, g\rangle_{\pi}}&=\abs{\langle\overline{F}^{i}\rho_{\infty}, g\rangle }
\\&\lesssim \norm{\overline{F}^{i}\rho_{\infty}}_{\dot{B}^{-\alpha/2}_{2,2}(\mathbb{T}^{d})}\norm{g}_{\dot{B}^{\alpha/2}_{2,2}(\mathbb{T}^{d})}\\&\lesssim \norm{\overline{F}^{i}\rho_{\infty}}_{\dot{B}^{\beta}_{2,2}(\mathbb{T}^{d})}\norm{g}_{\dot{B}^{\alpha/2}_{2,2}(\mathbb{T}^{d})} \\&\lesssim\norm{\overline{F}^{i}\rho_{\infty}}_{\dot{B}^{\beta}_{2,2}(\mathbb{T}^{d})}\norm{g}_{\mathcal{H}^{1}(\pi)},
\end{align*} for $i=1,...,d$, using $\beta>-\alpha/2$ and \eqref{eq:H1-spaces}. Hence, we find 
\begin{align*}
\norm{\overline{F}^{i}}_{\mathcal{H}^{-1}(\pi)}&\lesssim \norm{\overline{F}^{i}\rho_{\infty}}_{\dot{B}^{\beta}_{2,2}(\mathbb{T}^{d})}\\&\lesssim\norm{\overline{F}^{i}\rho_{\infty}}_{B^{\beta}_{2,2}(\mathbb{T}^{d})}\\&\lesssim\norm{\overline{F}^{i}\rho_{\infty}}_{\calC^{\beta+(1-\gamma)\alpha}(\mathbb{T}^{d})}\\&\lesssim\norm{F}_{\mathcal{X}^{\beta,\gamma}_{\infty}(\mathbb{T}^{d})}(1+\norm{F}_{\mathcal{X}^{\beta,\gamma}_{\infty}(\mathbb{T}^{d})})[\norm{\rho_{\infty}}_{\calC^{\alpha+\beta-1}}+\norm{\rho_{\infty}^{\sharp}}_{\calC^{2(\alpha+2\beta-1)}}],
\end{align*}
where the estimate for the product of $\overline{F}^{i}$ and $\rho_{\infty}$ follows from \cref{thm:def-F-pi-int}.\\
This proves that $\overline{F}\in\mathcal{H}^{-1}(\pi)^{d}$. Convergence follows by the same estimate.
\end{proof}
\begin{corollary}\label{cor:H1-conv}
Let $F\in\mathcal{X}^{\beta,\gamma}_{\infty}$ and $F^{m}\in C^{\infty}(\mathbb{T}^{d})$ with $F^{m}\to F$ in $\mathcal{X}^{\beta,\gamma}_{\infty}(\mathbb{T}^{d})$. Let $\chi^{m}=(\chi^{m,i})_{i=1,\dots,d}\in L^{2}(\pi)^{d}$ denote the unique solution of 
\begin{align*}
(-\mathfrak{L})\chi^{m,i}=F^{m,i}-\langle F^{m,i}\rangle_{\pi}=:\overline{F}^{m,i}
\end{align*} with $\langle\chi^{m,i}\rangle_{\pi}=0$. Then $(\chi^{m})_{m}$ converges in $\mathcal{H}^{1}(\pi)^{d}\cap L^{2}(\pi)^{d}$ to a limit $\chi$. 
\end{corollary}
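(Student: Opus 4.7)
The plan is to read off Cauchy-ness of $(\chi^{m})_{m}$ in $\mathcal{H}^{1}(\pi)^{d}$ directly from the Poisson equation by pairing it with the difference of two approximations. Since $\chi^{m,i},\chi^{n,i}\in D$, we may apply the identity $\norm{f}_{\mathcal{H}^{1}(\pi)}^{2}=\langle(-\mathfrak{L})f,f\rangle_{\pi}$ to $f=\chi^{m,i}-\chi^{n,i}$, which gives
\begin{align*}
\norm{\chi^{m,i}-\chi^{n,i}}_{\mathcal{H}^{1}(\pi)}^{2}=\langle\overline{F}^{m,i}-\overline{F}^{n,i},\chi^{m,i}-\chi^{n,i}\rangle_{\pi}.
\end{align*}
By definition of the $\mathcal{H}^{-1}(\pi)$-norm (and since $\overline{F}^{m,i}-\overline{F}^{n,i}$ has vanishing $\pi$-mean, while $\chi^{m,i}-\chi^{n,i}\in D\subset \mathcal{H}^{1}(\pi)$), the right-hand side is bounded by $\norm{\overline{F}^{m,i}-\overline{F}^{n,i}}_{\mathcal{H}^{-1}(\pi)}\norm{\chi^{m,i}-\chi^{n,i}}_{\mathcal{H}^{1}(\pi)}$. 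Dividing by the $\mathcal{H}^{1}(\pi)$-norm of the difference yields
\begin{align*}
\norm{\chi^{m,i}-\chi^{n,i}}_{\mathcal{H}^{1}(\pi)}\leqslant \norm{\overline{F}^{m,i}-\overline{F}^{n,i}}_{\mathcal{H}^{-1}(\pi)},
\end{align*}
and by \cref{thm:dual} the right-hand side tends to zero as $m,n\to\infty$, since $F^{m}\to F$ in $\mathcal{X}^{\beta,\gamma}_{\infty}(\mathbb{T}^{d})$. Hence $(\chi^{m})_{m}$ is Cauchy, and therefore convergent, in $\mathcal{H}^{1}(\pi)^{d}$.

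For the $L^{2}(\pi)^{d}$-convergence, I would invoke the Poincaré-type estimate $\norm{f-\langle f\rangle_{\pi}}_{L^{2}(\pi)}^{2}\leqslant C\norm{f}_{\mathcal{H}^{1}(\pi)}^{2}$ announced just before the corollary. Applied to $f=\chi^{m,i}-\chi^{n,i}$, which by construction satisfies $\langle f\rangle_{\pi}=0$, this gives $\norm{\chi^{m,i}-\chi^{n,i}}_{L^{2}(\pi)}\lesssim\norm{\chi^{m,i}-\chi^{n,i}}_{\mathcal{H}^{1}(\pi)}$, so the $\mathcal{H}^{1}(\pi)^{d}$-Cauchy property upgrades to an $L^{2}(\pi)^{d}$-Cauchy property, and the limit $\chi$ lies in both spaces. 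Since the zero-mean condition passes to the $L^{2}(\pi)$-limit, we also get $\langle\chi^{i}\rangle_{\pi}=0$.

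The mildly delicate point is just to double-check that $\overline{F}^{m}-\overline{F}^{n}\to 0$ in $\mathcal{H}^{-1}(\pi)^{d}$ and not merely boundedness in that space: the continuity statement in \cref{thm:dual} (together with the Lipschitz bound of \cref{thm:def-F-pi-int} for the mean $\langle\cdot\rangle_{\pi}$) provides exactly this, because both $F^{m}-F^{n}\to 0$ in $\mathcal{X}^{\beta,\gamma}_{\infty}(\mathbb{T}^{d})$ and $\langle F^{m}-F^{n}\rangle_{\pi}\to 0$. No additional ingredient is needed beyond Poisson equation testing, the duality inequality, and the Poincaré-type estimate for $\mathfrak{L}$.
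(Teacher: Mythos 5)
Your proposal is correct and follows essentially the same route as the paper: test the Poisson equation against the difference $\chi^{m,i}-\chi^{n,i}$ to extract Cauchy-ness in $\mathcal{H}^{1}(\pi)$ from the $\mathcal{H}^{-1}(\pi)$-convergence of $\overline{F}^m$ (Theorem \ref{thm:dual}), then upgrade to $L^2(\pi)$ via the Poincaré-type bound. The only superficial difference is that you cite the announced Poincaré estimate as a black box, whereas the paper's proof of the corollary actually derives it in place by passing through the Lebesgue-mean fractional Poincaré inequality and the identity $\norm{\chi-\langle\chi\rangle_{\lambda}}_{L^{2}(\pi)}^{2}=\norm{\chi}_{L^{2}(\pi)}^{2}+\langle\chi\rangle_{\lambda}^{2}$ valid when $\langle\chi\rangle_{\pi}=0$.
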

\begin{proof} 
Convergence in $\mathcal{H}^{1}(\pi)$ follows from the estimate
\begin{align*}
\norm{\chi^{m,i}-\chi^{m',i}}_{\mathcal{H}^{1}(\pi)}^{2}&=\langle(-\mathfrak{L})(\chi^{m,i}-\chi^{m',i}),\chi^{m,i}-\chi^{m',i}\rangle_{\pi}\\&=\langle \overline{F}^{m,i}-\overline{F}^{m',i},\chi^{m,i}-\chi^{m',i}\rangle_{\pi}
\\&\leqslant\norm{\overline{F}^{m,i}-\overline{F}^{m',i}}_{\mathcal{H}^{-1}(\pi)}\norm{\chi^{m,i}-\chi^{m',i}}_{\mathcal{H}^{1}(\pi)}.
\end{align*}
Thus we obtain
\begin{align*}
\norm{\chi^{m,i}-\chi^{m',i}}_{\mathcal{H}^{1}(\pi)}&\leqslant \norm{\overline{F}^{m,i}-\overline{F}^{m',i}}_{\mathcal{H}^{-1}(\pi)}.
\end{align*}
And indeed the $\mathcal{H}^{-1}(\pi)$-norm on the right-hand side is small, when $m,m'$ are close, by \cref{thm:dual}.\\ It remains to conclude on $L^{2}(\pi)$ convergence.  
By \cref{thm:dual}, we also obtain the seminorm equivalences, $\norm{\cdot}_{\mathcal{H}^{1}(\pi)}\simeq\norm{\cdot}_{\dot{H}^{\alpha/2}(\mathbb{T}^{d})}\simeq\norm{\cdot}_{\dot{B}^{\alpha/2}_{2,2}(\mathbb{T}^{d})}$. 
Combining with the fractional Poincaré inequality on the torus, 
\begin{align*}
\norm{u-\langle u\rangle_{\lambda}}_{L^{2}}^{2}=\sum_{k\in\mathbb{Z}^{d}\setminus\{0\}}\abs{\hat{u}(k)}^{2}\leqslant\sum_{k\in\mathbb{Z}^{d}\setminus\{0\}}\abs{k}^{\alpha}\abs{\hat{u}(k)}^{2}=\norm{u}_{\dot{H}^{\alpha/2}(\mathbb{T}^{d})}^{2},
\end{align*} with Lebesgue measure $\lambda$ on $\mathbb{T}^{d}$, we can thus estimate
\begin{align}\label{eq:poincare}
\norm{\chi-\langle\chi\rangle_{\lambda}}_{L^{2}(\pi)}\lesssim\norm{\chi-\langle\chi\rangle_{\lambda}}_{L^{2}(\lambda)}\leqslant\norm{\chi}_{\dot{H}^{\alpha/2}(\mathbb{T}^{d})}\lesssim\norm{\chi}_{\mathcal{H}^{1}(\pi)}.
\end{align} 
Furthermore, as $\langle\chi\rangle_{\pi}=0$, we obtain $\norm{\chi-\langle\chi\rangle_{\lambda}}_{L^{2}(\pi)}^{2}=\norm{\chi}_{L^{2}(\pi)}^{2}+\langle\chi\rangle_{\lambda}^{2}$. Together, we thus find
\begin{align}
\norm{\chi}_{L^{2}(\pi)}^{2}\lesssim\norm{\chi}_{L^{2}(\pi)}^{2}+\langle\chi\rangle_{\lambda}^{2}\lesssim\norm{\chi}_{\mathcal{H}^{1}(\pi)}^{2}.
\end{align} 
In particular, we conclude that $\mathcal{H}^{1}(\pi)$-convergence implies $L^{2}(\pi)$-convergence of the sequence $(\chi^{m})$.
\end{proof}

\begin{theorem}\label{thm:poisson-eq}
Let $(F^{m})_{m}$, $(\chi^{m})_{m}$ and $\chi$ be as in \cref{cor:H1-conv}.\\ Then, $(\chi^{m})_{m}$ converges to $\chi$ in $(\calC^{\theta}_{2}(\mathbb{T}^{d}))^{d}$, $\theta\in ((1-\beta)/2,\alpha+\beta)$ and there exists $\lambda>0$, such that 
\begin{align}
\chi=\chi^{\sharp}+\nabla \chi\para I_{\lambda}(\overline{F})
\end{align} for $\chi^{\sharp}\in(\calC^{2\theta-1}_{2}(\mathbb{T}^{d}))^{d}$.\\ Furthermore, the limit $\chi$ solves the singular Poisson equation with singular right-hand side $\overline{F}$,  
\begin{align}
(-\mathfrak{L})\chi=\overline{F}.
\end{align}
\end{theorem}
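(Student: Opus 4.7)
The plan is to reduce the Poisson equation to the singular resolvent equation treated in \cref{thm:res-eq}, and then propagate the $L^2(\pi)$-convergence of $(\chi^m)$ from \cref{cor:H1-conv} to $\mathcal{D}^\theta_2$-convergence via the Lipschitz dependence of the resolvent solution map on its data. For $\lambda>0$ fixed sufficiently large (as required in \cref{thm:res-eq}), each approximation satisfies
\begin{align*}
R_\lambda \chi^m \;=\; \lambda\chi^m+\overline F^m \;=:\; G^m,
\end{align*}
and since $\chi^m\in D\subset\mathcal{D}^\theta_2$ by \cref{thm:generator}, $\chi^m$ is exactly the unique paracontrolled solution of \cref{thm:res-eq} with data $G^m$.

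The crucial step is to decompose $G^m$ as paracontrolled data with respect to the limiting drift $F$ in a way admitting a limit as $m\to\infty$. I would take the constant paracontrolled derivative $(G^m)'=e\in(\calC^{\alpha+\beta-1}_2)^d$ (with $1$ in each component) and, using the identity $1\para F^i=F^i-\Delta_{-1}F^i$, set
\begin{align*}
G^{m,\sharp,i}\;=\;\lambda\chi^{m,i}+(F^{m,i}-F^i)+\Delta_{-1}F^i-\langle F^{m,i}\rangle_\pi.
\end{align*}
For the $L^2(\pi)^d$-limit $\chi$ from \cref{cor:H1-conv}, the analogous choice yields $G^{\sharp,i}=\lambda\chi^i+\Delta_{-1}F^i-\langle F^i\rangle_\pi\in\calC^0_2$: the first term lies in $\calC^0_2$ thanks to $L^2(\pi)\hookrightarrow L^2(\lambda)\hookrightarrow\calC^0_2$ (using $\rho_\infty>0$ from \cref{cor:inv-d}), the second is smooth, and the third is a constant; the continuity of the mean comes from \cref{thm:def-F-pi-int}. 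Applying \cref{thm:res-eq} to this data produces a unique paracontrolled $\tilde\chi\in\mathcal{D}^\theta_2$ solving $R_\lambda\tilde\chi=\lambda\chi+\overline F$. Once the data convergence $(G^{m,\sharp},(G^m)')\to(G^\sharp,G')$ in $\calC^0_2\times(\calC^{\alpha+\beta-1}_2)^d$ is secured, the Lipschitz bound proven within the proof of \cref{thm:res-eq} yields $\chi^m\to\tilde\chi$ in $\mathcal{D}^\theta_2$, and hence in $(\calC^\theta_2)^d$. The $L^2(\pi)$-convergence of $\chi^m$ together with $\mathcal{D}^\theta_2\hookrightarrow L^2(\pi)^d$ forces $\tilde\chi=\chi$.

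The paracontrolled ansatz furnished by \cref{thm:res-eq} reads $\chi=\chi^{(0)}+(e+\nabla\chi)\para I_\lambda(F)$ with $\chi^{(0)}\in(\calC^{2\theta-1}_2)^d$; absorbing the smoother term $e\para I_\lambda(F)\in(\calC^{\alpha+\beta}_2)^d$ into a redefined remainder $\chi^\sharp$, and using $\nabla\chi\para c=0$ for any constant $c$ (so that $\nabla\chi\para I_\lambda(F)=\nabla\chi\para I_\lambda(\overline F)$), delivers the asserted representation $\chi=\chi^\sharp+\nabla\chi\para I_\lambda(\overline F)$. That $(-\mathfrak{L})\chi=\overline F$ is then an immediate rearrangement of $R_\lambda\chi=\lambda\chi+\overline F$. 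I expect the main obstacle to lie in verifying the data convergence $G^{m,\sharp}\to G^\sharp$ in $(\calC^0_2)^d$: the difference $F^{m,i}-F^i$ is only small in $\calC^\beta$ rather than $\calC^0_2$, so one must absorb its residual singular part into a small correction of $(G^m)'$ of vanishing norm in $(\calC^{\alpha+\beta-1}_2)^d$, leaving a remainder that is genuinely Cauchy in $\calC^0_2$. Pairing this refinement with the joint Lipschitz dependence on $(G^\sharp,G')$ established in the proof of \cref{thm:res-eq} is the technical core of the argument.
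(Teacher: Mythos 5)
Your approach is essentially the same as the paper's: view $\chi^{m}$ as the solution of $R_{\lambda}\chi^{m}=\lambda\chi^{m}+\overline F^{m}$, pass the $L^{2}(\pi)$-convergence of $(\chi^{m})$ from \cref{cor:H1-conv} through the Lipschitz continuity of the paracontrolled resolvent solution map from \cref{thm:res-eq}, identify the limit $g$ with $\chi$, and read off both the paracontrolled structure and the equation. Your observations that $L^{2}(\pi)\subset L^{2}(\lambda)\subset\calC^{0}_{2}$ by the uniform positivity of $\rho_{\infty}$, that $\langle F^{m}\rangle_{\pi}\to\langle F\rangle_{\pi}$ by \cref{thm:def-F-pi-int}, and that $I_{\lambda}(\overline F)-I_{\lambda}(F)=-\lambda^{-1}\langle F\rangle_{\pi}$ is a constant so the paraproduct is unchanged, are all correct and are (implicitly) in the paper's argument as well.

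The one place where you get yourself into trouble is your choice to decompose the data $G^{m}=\lambda\chi^{m}+\overline F^{m}$ paracontrolled \emph{relative to the limiting} $F$, which forces the term $F^{m,i}-F^{i}$ into $G^{m,\sharp,i}$ and makes the required $\calC^{0}_{2}$-convergence fail. You then propose, vaguely, to "absorb the residual singular part into a small correction of $(G^{m})'$" — but $F^{m}-F$ is not of the form $h^{m}\para F$ for some small $h^{m}$, and you give no mechanism for producing such a representation, so this fix does not go through as stated. The clean resolution, and what the paper's proof relies on, is instead to decompose $G^{m}$ paracontrolled \emph{relative to $F^{m}$}: take $G^{m,\prime,i}=e_{i}$ and $G^{m,\sharp,i}=\lambda\chi^{m,i}+\Delta_{-1}F^{m,i}-\langle F^{m,i}\rangle_{\pi}$, so that $G^{m,\sharp,i}\to G^{\sharp,i}$ in $\calC^{0}_{2}$ holds without any residual singular term. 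One then needs the joint Lipschitz dependence of the resolvent solution map on the full data triple $(G^{\sharp},G',F)\in\calC^{0}_{2}\times(\calC^{\alpha+\beta-1}_{2})^{d}\times\calX^{\beta,\gamma}_{\infty}$ — not just on $(G^{\sharp},G')$ as you write — which follows from the Banach fixed-point construction in \cref{thm:res-eq} with constants depending only on $\|F\|_{\calX^{\beta,\gamma}_{\infty}}$, uniformly along the approximating sequence. With that replacement, your argument matches the paper's.
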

\begin{proof}
Trivially, for $\lambda>0$, $\chi^{m}$ solves the resolvent equation
\begin{align*}
R_{\lambda}\chi^{m}=(\lambda-\mathfrak{L})\chi^{m}=\lambda\chi^{m}+\overline{F}^{m}
\end{align*} with right-hand side $G^{m}:=\lambda\chi^{m}+\overline{F}^{m}$. The right-hand sides $(G^{m})$ converge in $(\calC^{\beta}_{2}(\mathbb{T}^{d}))^{d}$ to $G=\lambda\chi+\overline{F}$, because $\chi^{m}\to\chi$ in $L^{2}(\pi)^{d}$ by \cref{cor:H1-conv} and, thanks to the equivalence of $\pi$ and the Lebesgue measure $\lambda_{\mathbb{T}^{d}}$, thus also in $L^{2}(\lambda_{\mathbb{T}^{d}})^{d}$. Choosing $\lambda>1$ big enough, by \cref{thm:res-eq}, we can solve the resolvent equation 
\begin{align}\label{eq:g}
R_{\lambda}g^{i}=G^{i}=G^{\sharp,i}+G^{\prime,i}\para F,
\end{align} with $G^{\sharp,i}:=\lambda\chi^{i}\in L^{2}(\lambda)\subset\calC^{0}_{2}(\mathbb{T}^{d})$ and $G^{\prime,i}:=(1-\langle F^{i}\rangle_{\pi})e_{i}\in\calC^{\alpha+\beta-1}(\mathbb{T}^{d})$. Thereby we  obtain a paracontrolled solution $g^{i}\in\mathcal{D}^{\theta}_{2}$ for $\theta<\alpha+\beta$, with $g^{i}=g^{\sharp,i}+\nabla g^{i}\para I_{\lambda}(F)$, $g^{\sharp,i}\in\calC^{2\theta-1}_{2}(\mathbb{T}^{d})$ and $I_{\lambda}(F):=\int_{0}^{\infty}e^{-\lambda t}P_{t}F dt \in \calC^{\alpha+\beta}(\mathbb{T}^{d})$. By continuity of the solution map for the resolvent equation, we obtain convergence of $\chi^{m,i}\to g^{i}$ in $\mathcal{D}^{\theta}_{2}$ for $m\to\infty$. Convergence of $(\chi^{m})$ to $g$ in $(\mathcal{D}^{\theta}_{2})^{d}$ in particular implies convergence in $L^{2}(\lambda_{\mathbb{T}^{d}})^{d}$ and thus in $L^{2}(\pi)^{d}$, which implies that almost surely $g=\chi$ and hence, by \eqref{eq:g}, that $\chi\in (\mathcal{D}^{\theta}_{2})^{d}$ solves $(-\mathfrak{L})\chi=\overline{F}$.
\end{proof}

\end{section}

\begin{section}{Fluctuations in the Brownian and pure Lévy noise case}\label{sec:CLT}
In this section, we prove the central limit \cref{thm:main-thm-ph} for the diffusion $X$ with periodic coefficients. In the following, we again explicitly distinguish between $X$ and the projected process $X^{\mathbb{T}^{d}}$. Of course, the central limit theorem in particular implies that for $t>0$, $\frac{1}{n}X_{nt}\to t \langle F\rangle_{\pi}$ with convergence in probability for $n\to\infty$, i.e.~a weak law of large numbers. The central limit theorem then quantifies the fluctuations around the mean $t \langle F\rangle_{\pi}$.\\
Due to ergodicity of $\pi$, it follows by the von Neumann ergodic theorem that, if the projected process is started in $X_{0}^{\mathbb{T}^{d}}\sim\pi$,   
$\frac{1}{n}\int_{0}^{nt}b(X_{s}^{\mathbb{T}^{d}})ds\to t\langle b\rangle_{\pi}$ in $L^{2}(\p_{\pi})$ as $n\to\infty$  for $b\in L^{\infty}(\mathbb{T}^{d})$. As $\p_{\pi}=\int_{\mathbb{T}^{d}}\p_{x}\pi(dx)$, this implies in particular the convergence (along a subsequence) in $L^{2}(\p_{x})$ for $\pi$-almost all $x$.\\ The pointwise spectral gap estimates yield the following slightly stronger ergodic theorem for the process started in $X_{0}^{\mathbb{T}^{d}}=x$ for any $x\in\mathbb{T}^{d}$. In particular, in the periodic homogenization result for the PDE, \cref{cor:PDEhomog} below, pointwise convergence (for every $x\in\mathbb{T}^{d}$) of the PDE solutions can be proven.

\begin{lemma}\label{lem:ergodic-thm}
Let $b\in L^{\infty}(\mathbb{T}^{d})$ and $x\in\mathbb{T}^{d}$. Let $X^{\mathbb{T}^{d}}$ be the projected solution of the $\mathcal{G}=\partial_{t}+\mathfrak{L}$- martingale problem on the torus $\mathbb{T}^{d}$ started in $X_{0}^{\mathbb{T}^{d}}=x\in\mathbb{T}^{d}$.\\ Then the following convergence holds in $L^{2}(\p)$: 
\begin{align*}
\frac{1}{n}\int_{0}^{nt}b(X_{s}^{\mathbb{T}^{d}})ds\to t\langle b\rangle_{\pi}.
\end{align*}
\end{lemma}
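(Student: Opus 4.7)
The plan is to set $\bar{b} := b - \langle b\rangle_{\pi}$ and show directly that
\[
\mathds{E}_{x}\!\brackets[\bigg]{\paren[\bigg]{\frac{1}{n}\int_{0}^{nt}\bar{b}(X^{\mathbb{T}^{d}}_{s})\diff s}^{2}} \longrightarrow 0
\]
as $n\to\infty$, which is equivalent to the stated $L^{2}(\p)$-convergence. The natural way to unfold the square is to apply Fubini and the Markov property of $X^{\mathbb{T}^{d}}$ (whose existence follows from \cref{thm:mainthm1} on the torus) to write the integrand, for $r\geqslant s$, as $\mathds{E}_{x}[\bar{b}(X^{\mathbb{T}^{d}}_{s})(T_{r-s}\bar{b})(X^{\mathbb{T}^{d}}_{s})]$.

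The essential input is then the $L^{\infty}$-spectral gap estimate from \cref{thm:inv-m}: since $\langle \bar{b}\rangle_{\pi}=0$ and $\bar{b}\in L^{\infty}(\mathbb{T}^{d})$,
\[
\norm{T_{u}\bar{b}}_{L^{\infty}} \leqslant K\norm{\bar{b}}_{L^{\infty}} e^{-\mu u},\qquad u\geqslant 0.
\]
Plugging this into the above representation gives the pointwise bound
$\abs{\mathds{E}_{x}[\bar{b}(X^{\mathbb{T}^{d}}_{s})\bar{b}(X^{\mathbb{T}^{d}}_{r})]}\leqslant K\norm{\bar{b}}_{L^{\infty}}^{2}e^{-\mu(r-s)}$, uniformly in $x$.

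Combining these two ingredients and exploiting the symmetry $\{s\leqslant r\}\cup\{r\leqslant s\}=[0,nt]^{2}$,
\[
\mathds{E}_{x}\!\brackets[\bigg]{\paren[\bigg]{\frac{1}{n}\int_{0}^{nt}\bar{b}(X^{\mathbb{T}^{d}}_{s})\diff s}^{2}}
\leqslant \frac{2K\norm{\bar{b}}_{L^{\infty}}^{2}}{n^{2}}\int_{0}^{nt}\!\int_{s}^{nt} e^{-\mu(r-s)}\diff r\diff s
\leqslant \frac{2K\norm{\bar{b}}_{L^{\infty}}^{2}\,t}{\mu\, n},
\]
which vanishes as $n\to\infty$. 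Since $\bar{b}=b-\langle b\rangle_{\pi}$, this is exactly the claimed $L^{2}(\p)$-convergence.

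No step is truly hard here: the only thing to verify carefully is that the semigroup identity $\mathds{E}_{x}[\bar{b}(X^{\mathbb{T}^{d}}_{r})\mid\mathcal{F}_{s}]=(T_{r-s}\bar{b})(X^{\mathbb{T}^{d}}_{s})$ applies to merely bounded measurable $\bar{b}$, which is immediate from the strong Markov property of $X^{\mathbb{T}^{d}}$ (\cref{thm:mainthm1}) together with the fact that $(T_{t})$ acts as a contraction semigroup on $L^{\infty}(\mathbb{T}^{d})$. Everything else is Fubini and the exponential decay provided by \cref{thm:inv-m}.
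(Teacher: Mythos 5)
Your proof is correct and follows essentially the same route as the paper: subtract the mean, expand the square via Fubini and the Markov property, and use the $L^\infty$-spectral gap of \cref{thm:inv-m} to get exponential decay in $r-s$. If anything, your bookkeeping is slightly cleaner than the paper's, since you estimate the outer $T_s$ simply by contraction rather than by a second application of the spectral gap — the paper writes a bound $|T_s(bT_{r-s}b)(x)|\lesssim e^{-\mu s}e^{-\mu(r-s)}$ which does not literally follow (as $bT_{r-s}b$ need not have zero $\pi$-mean), but the extra $e^{-\mu s}$ factor is not needed and your argument shows this.
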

\begin{proof}
Without loss of generality, we assume that $\langle b\rangle_{\pi}=0$, otherwise we subtract the mean. With the Markov property we obtain
\begin{align*}
\norm[\bigg]{\frac{1}{n}\int_{0}^{nt}b(X_{s}^{\mathbb{T}^{d}})ds}_{L^{2}(\p)}^{2}&=\frac{1}{n^2}\int_{0}^{nt}\int_{0}^{nt}\E[b(X_{s}^{\mathbb{T}^{d}})b(X_{r}^{\mathbb{T}^{d}})]dsdr\\&=\frac{2}{n^2}\int_{0}^{nt}\int_{0}^{nt}\mathbf{1}_{s\leqslant r}\E\Bigl[b(X_{s}^{\mathbb{T}^{d}})\E_{s}[b(X_{r}^{\mathbb{T}^{d}})]\Bigr]dsdr
\end{align*}
Using the spectral gap estimate \eqref{eq:p-sp-est}, we can estimate
\begin{align*}
\abs[\bigg]{\frac{2}{n^2}\int_{0}^{nt}\int_{0}^{nt}\mathbf{1}_{s\leqslant r}T_{s}(bT_{r-s}b)(x)dsdr}&\leqslant\frac{2K^{2}\norm{b}_{L^{\infty}}^{2}}{n^2}\int_{0}^{nt}\int_{0}^{nt}e^{-\mu s}e^{-\mu(r-s)}dsdr
\\&=\frac{tK^{2}\norm{b}_{L^{\infty}}^{2}}{n\mu}(1-e^{-\mu nt})\to 0,
\end{align*} for $n\to\infty$. 
\end{proof}
\begin{theorem}\label{thm:main-thm-ph}
Let $\alpha\in (1,2]$ and $F\in\calC^{\beta}(\mathbb{T}^{d})$ for $\beta\in (\frac{1-\alpha}{2},0)$ or $F\in\mathcal{X}^{\beta,\gamma}_{\infty}(\mathbb{T}^{d})$ for $\beta\in (\frac{2-2\alpha}{3},\frac{1-\alpha}{2}]$ and $\gamma\in (\frac{2\beta+2\alpha-1}{\alpha},1)$. 
Let $X$ be the solution of the $\mathcal{G}=(\partial_{t}+\mathfrak{L})$-martingale problem started in $X_{0}=x\in\R^{d}$.\\
In the case $\alpha=2$ and $L=B$ for a standard Brownian motion $B$, the following functional central limit theorem holds:
\begin{align*}
\paren[\bigg]{\frac{1}{\sqrt{n}}(X_{nt}-nt\langle F\rangle_{\pi})}_{t\in [0,T]}\Rightarrow \sqrt{D}(W_{t})_{t\in [0,T]},
\end{align*} with convergence in distribution in $C([0,T],\R^{d})$, a $d$-dimensional standard Brownian motion $W$
and constant diffusion matrix $D$ given by
\begin{align*}
D(i,j):=\int_{\mathbb{T}^{d}} (e_{i}+\nabla \chi^{i}(x))^{T}(e_{j}+\nabla \chi^{j}(x))\pi(dx)
\end{align*} for $i,j=1,\dots,d$ and the $i$-th euclidean unit vector $e_{i}$. Here, $\chi$ solves the singular Poisson equation $(-\mathfrak{L})\chi^{i}=F^{i}-\langle F^{i}\rangle_{\pi}$, $i=1,...,d$, according to \cref{thm:poisson-eq}.\\
In the case $\alpha\in (1,2)$, the following non-Gaussian central limit theorem holds:
\begin{align*}
\paren[\bigg]{\frac{1}{n^{1/\alpha}}(X_{nt}-nt\langle F\rangle_{\pi})}_{t\in [0,T]}\Rightarrow (\tilde{L}_{t})_{t\in [0,T]},
\end{align*} with convergence in distribution in $D([0,T],\R^{d})$, where $\tilde{L}$ is a $d$-dimensional symmetric $\alpha$-stable nondegenerate Lévy process (with generator $-\La$).
\end{theorem}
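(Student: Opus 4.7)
The plan is to apply the Kipnis--Varadhan martingale decomposition using the Poisson solution $\chi$ from \cref{thm:poisson-eq}. The identity $(-\mathfrak{L})\chi = F - \langle F\rangle_{\pi}$ combined with the SDE $X_t = x + \int_0^t F(X_s)\diff s + L_t$ formally yields the decomposition
\begin{align*}
X_t - x - t\langle F\rangle_{\pi} = [\chi(x) - \chi(X_t)] + M_t^{\chi} + L_t,
\end{align*}
where the Dynkin martingale $M_t^{\chi} := \chi(X_t) - \chi(x) - \int_0^t \mathfrak{L}\chi(X_s)\diff s$ would be a genuine $\p$-martingale if $\chi$ were in the domain $D$ from \cref{thm:generator}. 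Since our $\chi$ only solves $(-\mathfrak{L})\chi = F - \langle F\rangle_{\pi} \in \mathcal{H}^{-1}(\pi)^d$, I would first establish the decomposition at the level of the smooth approximants $\chi^m \in D$ from \cref{cor:H1-conv}, for which Dynkin's formula is immediate via the martingale problem \cref{def:martp}, and then pass to the limit using $\chi^m \to \chi$ in $\mathcal{H}^1(\pi)^d \cap L^2(\pi)^d \cap (\calC^{\theta}_{2}(\mathbb{T}^d))^d$.

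In the Brownian case $\alpha = 2$, I would rescale the decomposition by $n^{-1/2}$. The boundary term $\chi(X_{nt})/\sqrt{n}$ vanishes in probability because $\E[\chi(X_{nt})^2]$ is bounded uniformly in $n$ by the $L^\infty$-spectral gap of \cref{thm:inv-m} and $\chi \in L^2(\pi)^d$. At the level of smooth $\chi^m$, It\^o's formula gives $M^{\chi^m}_t + B_t = \int_0^t (\mathrm{Id} + \nabla\chi^m)(X_s)\diff B_s$, whose quadratic covariation matrix is $\int_0^t (e_i + \nabla\chi^{m,i})(e_j + \nabla\chi^{m,j})^T(X_s)\diff s$. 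After the time rescaling $s \mapsto ns$ and division by $n$, the ergodic \cref{lem:ergodic-thm} identifies the limit covariation as $t \cdot D^m$ with $D^m(i,j) := \int_{\mathbb{T}^d}(e_i + \nabla\chi^{m,i})(e_j + \nabla\chi^{m,j})^T\diff\pi$, and the functional martingale CLT yields convergence in $C([0,T],\R^d)$ to $\sqrt{D^m}W$. Letting $m \to \infty$ then uses $D^m \to D$ (from $\nabla\chi^m \to \nabla\chi$ in $L^2(\pi)^{d\times d}$ at the paracontrolled level) together with a Kipnis--Varadhan-type bound on $\frac{1}{\sqrt n}\int_0^{nt}(F - F^m)(X_s)\diff s$ in terms of $\|F - F^m\|_{\mathcal{H}^{-1}(\pi)}$, which vanishes by \cref{thm:dual}.

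In the stable case $\alpha \in (1,2)$ the same decomposition is rescaled by $n^{-1/\alpha}$. The self-similarity $(L_{nt}/n^{1/\alpha})_{t\in[0,T]} \stackrel{d}{=} (L_t)_{t\in[0,T]}$ is exact and $\chi(X_{nt})/n^{1/\alpha} \to 0$ in probability as before. The decisive point is that the martingale term $M^{\chi}_{nt}/n^{1/\alpha}$ vanishes in the limit: at the level of $\chi^m$, the angle bracket of $M^{\chi^m}$ is governed by $\int_0^t \Gamma^\alpha_\nu(\chi^m)(X_s)\diff s$, which by \cref{lem:cdc-fL} and invariance of $\pi$ grows linearly, so $\E[(M^{\chi^m}_{nt})^2] = O(n)$. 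Since $1/\alpha > 1/2$ for $\alpha < 2$, Chebyshev's inequality kills this contribution. Tightness in $D([0,T],\R^d)$ would follow from Aldous' criterion using the dominance of $L$'s jumps over those of $X$ in this scaling, and the convergence of finite-dimensional distributions reduces to that of $(L_t)_{t\in[0,T]}$.

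The main obstacle is justifying the martingale decomposition and the covariation limit when $\chi$ is only paracontrolled, not in $D$. Since $\chi^m \to \chi$ only in $L^2(\pi)^d \cap \mathcal{H}^1(\pi)^d \cap (\calC^{\theta}_{2})^d$, the associated Dynkin martingales converge in $L^2(\p_{\pi})$ thanks to the Kipnis--Varadhan energy identity $\E_{\pi}[(M^{\chi^m}_t - M^{\chi^{m'}}_t)^2] = 2t\|\chi^m - \chi^{m'}\|_{\mathcal{H}^1(\pi)}^2$; to transfer this from the stationary law $\p_{\pi}$ to $\p_x$ for the specified starting point, I would use the uniform lower bound on the density from \cref{cor:inv-d} together with the spectral gap of \cref{thm:inv-m} so that $\p_x$ and $\p_{\pi}$ are comparable after any positive time. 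A parallel approximation is needed to make sense of the distributional limit covariance $D$ through the paracontrolled pairing $\int(e+\nabla\chi)(e+\nabla\chi)^T\diff\pi$, which must be defined as the limit of its smooth analogue $D^m$ and shown to be well-posed via the $\mathcal{H}^1(\pi)$ convergence and \cref{lem:cdc-fL}.
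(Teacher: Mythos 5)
Your proposal follows the same Kipnis--Varadhan strategy as the paper: decompose the drift functional via the Poisson solution $\chi$ and Dynkin's martingale, pass the decomposition through the approximation $\chi^m\to\chi$, then apply the martingale functional CLT in the Brownian case and show the martingale vanishes in the $\alpha$-stable scaling. The departure is in the order of limits and in how the martingale is controlled. The paper first passes $m\to\infty$ inside the decomposition --- using the pathwise identity $X=x+Z+L$ with $Z_t=\lim_m\int_0^t F^m(X_s)\,ds$ in $L^2(\p_x)$ from \cite[Theorem 5.10]{kp-wsmp} (which is the rigorous substitute for the formal SDE you invoke) --- so that the martingale CLT is applied exactly once to $n^{-1/2}(M_{nt}+B_{nt})$. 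You instead apply the CLT at fixed $m$ to get $\sqrt{D^m}W$ and then send $m\to\infty$; this is workable, but it makes the argument a two-parameter limit and requires the Kipnis--Varadhan energy bound $\E[(M_{nt}-M^m_{nt})^2]/n\lesssim \|\chi-\chi^m\|_{\mathcal H^1(\pi)}^2$ uniformly in $n$, plus the $\p_\pi\to\p_x$ transfer via the density sandwich $\rho_t(x,\cdot)\le C$, $\rho_\infty\ge c>0$ from \cref{cor:L2-spectral-gap} and \cref{cor:inv-d}. The paper's ordering avoids this interchange and proves $M^m\to M$ in $L^2(\p_x)$ directly from $\nabla\chi^m\to\nabla\chi$ uniformly. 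In the stable case your bound $\E[(M^{\chi^m}_{nt})^2]=O(n)$ via $\Gamma^\alpha_\nu$ and \cref{lem:cdc-fL} is a nice alternative to the paper's direct jump-integral estimate (via \cite[Lemma 8.22]{peszat_zabczyk_2007} and the $L^\infty$/Lipschitz bounds on $\chi$); your version only uses the $\mathcal H^1(\pi)$-norm, which is slightly more robust, though both yield the same conclusion. For the Skorokhod convergence you appeal to Aldous' criterion, whereas the paper simply bounds the $J_1$-metric by the uniform norm and invokes \cite[Proposition VI.3.17]{Jacod2003} for adding a process converging to a continuous limit --- the latter is shorter since no separate tightness argument is needed. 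The only genuine omission in your sketch is making the decomposition rigorous: the existence of the $L^2$-limit $Z_t$, uniform in $t$, is what licenses the whole computation, and it is not a triviality for distributional $F$; you should cite it explicitly rather than proceed ``formally''.
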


\begin{proof}
As a byproduct of \cite[Theorem 5.10]{kp-wsmp}, we obtain that there exists a probability space $(\Omega,\F,\p)$ with an $\alpha$-stable symmetric non-degenerate process $L$, such that $X=x+Z+L$, where $Z$ is given by 
\begin{align}\label{eq:Z}
Z_{t}=\lim_{m\to\infty}\int_{0}^{t}F^{m}(X_{s})ds
\end{align} for a sequence $(F^{m})$ of smooth functions $F^{m}$ with $F^{m}\to F$ in $\mathcal{X}^{\beta,\gamma}_{\infty}(\mathbb{T}^{d})$ 
and where the limit is taken in $L^{2}(\p)$, uniformly in $t\in [0,T]$.\\
We write the additive functional $\int_{0}^{\cdot}(\overline{F^{m}})^{\R^{d}}(X_{s})ds=\int_{0}^{\cdot} \overline{F^{m}}(X_{s}^{\mathbb{T}^{d}})ds$ in terms of the periodic solution $\chi^{m}$ of the Poisson equation \eqref{eq:Poisson-m} with right hand side $F^{m}-\langle F^{m}\rangle _{\pi}=:\overline{F^{m}}$, such that
\begin{align}
X_{t}-t\langle F\rangle_{\pi}&=X_{0}+(Z_{t}-t\langle F\rangle_{\pi})+L_{t}\\&=X_{0}+\lim_{m\to\infty}\int_{0}^{t}\overline{F^{m}}(X_{s}^{\mathbb{T}^{d}})ds+L_{t}\label{eq:decomp-ph}\\&=X_{0}+\lim_{m\to\infty}\paren[\big]{[\chi^{m}(X_{0}^{\mathbb{T}^{d}})-\chi^{m}(X_{t}^{\mathbb{T}^{d}})]+M_{t}^{m}}+L_{t}
\label{eq:mart-ph}\\&=X_{0}+[\chi(X_{0}^{\mathbb{T}^{d}})-\chi(X_{t}^{\mathbb{T}^{d}})]+M_{t}+L_{t}.\label{eq:decomp-ph2}
\end{align} 
Here, the limit is again taken in $L^{2}(\p)$ and $\chi$ is the solution of the Poisson equation \eqref{eq:Poisson-limit} with right-hand side $\overline{F}$, which exists by \cref{thm:poisson-eq}.\\ To justify \eqref{eq:decomp-ph}, we use the convergence from \eqref{eq:Z} and $\langle F\rangle_{\pi}=\lim_{m\to\infty}\langle F^{m}\rangle_{\pi}$ by \cref{thm:def-F-pi-int}. In \eqref{eq:mart-ph}, we applied Itô's formula to $(\chi^{m})^{\R^{d}}(X_{t})$ for $m\in\N$. For the equality \eqref{eq:decomp-ph2}, we utilized that $\chi^{m}\to\chi$ in $L^{\infty}(\mathbb{T}^{d})$ by \cref{thm:poisson-eq} 
and that the sequence of martingales $(M^{m})$ converges in $L^{2}(\p)$ uniformly in time in $[0,T]$ to the martingale $M$. Here, for $\alpha\in (1,2)$, the martingales are given by (notation: $[y]:=y\mod\mathbb{Z}^{d}=\iota(y)$)
\begin{align*}
M_{t}^{m}=\int_{0}^{t}\int_{\R^{d}\setminus\{0\}}[\chi^{m}(X_{s-}^{\mathbb{T}^{d}}+[y]))-\chi^{m}(X_{s-}^{\mathbb{T}^{d}})]\hat{\pi}(ds,dy),
\end{align*} where $\hat{\pi}(ds,dy)=\pi(ds,dy)-ds\mu(dy)$ is the compensated Poisson random measure associated to $L$. $M$ is given by an analogue expression, where we replace $\chi^{m}$ by $\chi$.\\ In the Brownian noise case, $\alpha=2$, we have that $M_{t}^{m}=\int_{0}^{t}\nabla\chi^{m}(X_{s}^{\mathbb{T}^{d}})\cdot dB_{s}$ and $M_{t}$ is defined analogously with $\chi^{m}$ replaced by $\chi$. Indeed, convergence of the martingales in $L^{2}(\p)$ follows from the convergence of $(\chi^{m})$ to $\chi$ in $\calC^{\theta}_{2}(\mathbb{T}^{d})$ with $\theta\in (1,\alpha+\beta)$ by \cref{thm:poisson-eq}, which in particular implies uniform convergence of $(\chi^{m})$ and $(\nabla\chi^{m})$.\\

\noindent Let now first $\alpha=2$ and $L=B$ for a standard Brownian motion $B$. Then we have by the above, almost surely,
\begin{align*}
\frac{1}{\sqrt{n}}(X_{nt}-nt\langle F\rangle_{\pi})=\frac{1}{\sqrt{n}}X_{0}+\frac{1}{\sqrt{n}}[\chi(X_{0}^{\mathbb{T}^{d}})-\chi(X_{nt}^{\mathbb{T}^{d}})]+\frac{1}{\sqrt{n}}(M_{nt}+B_{nt})
\end{align*} with $M_{t}=\int_{0}^{t}\nabla\chi(X_{s}^{\mathbb{T}^{d}})\cdot dB_{s}$.\\
To obtain the central limit theorem, we will apply the functional martingale central limit theorem, \cite[Theorem 7.1.4]{Ethier1986}, to 
\begin{align*}
\paren[\bigg]{\frac{1}{\sqrt{n}}(M_{nt}+B_{nt})}_{t\in [0,T]}.
\end{align*} 
To that aim, we check the convergence of the quadratic variation 
\begin{align*}
\frac{1}{n}\langle M^{i}+B^{i},M^{j}+B^{j}\rangle_{nt}=\frac{1}{n}\int_{0}^{nt}(\operatorname{Id}+\nabla\chi(X_{s}^{\mathbb{T}^{d}}))^{T}(\operatorname{Id}+\nabla\chi(X_{s}^{\mathbb{T}^{d}})) (i,j)ds
\end{align*} in probability to 
\begin{align*}t\int_{\mathbb{T}^{d}}(\operatorname{Id}+\nabla\chi(x))^{T}(\operatorname{Id}+\nabla\chi(x))(i,j)\pi(dx)=tD(i,j).
\end{align*} 
This is a consequence of \cref{lem:ergodic-thm}.\\ 
The boundary term $\frac{1}{\sqrt{n}}[\chi(X_{0}^{\mathbb{T}^{d}})-\chi(X_{nt}^{\mathbb{T}^{d}})]$ vanishes when $n\to\infty$ as $\chi\in L^{\infty}(\mathbb{T}^{d})$. Furthermore, as a processes, 
\begin{align*}
\paren[\bigg]{\frac{1}{\sqrt{n}}[\chi(X_{0}^{\mathbb{T}^{d}})-\chi(X_{nt}^{\mathbb{T}^{d}})]}_{t\in [0,T]}
\end{align*} converges to the constant zero process almost surely with respect to the uniform topology in $C([0,T],\R^{d})$.\\ Using Slutsky's lemma and combining with the functional martingale central limit theorem above,
we obtain weak convergence of $(n^{-1/2}X_{nt})_{t\in [0,T]}$ to the Brownian motion $\sqrt{D}W$ with the constant diffusion matrix $D$ stated in the theorem.\\

\noindent Let now $\alpha\in (1,2)$. We rescale by $n^{-1/\alpha}$ and claim that the martingale $n^{-1/\alpha}M_{nt}$ vanishes in $L^{2}(\p)$ for $n\to\infty$. 
Indeed, in this case the martingale $M$ is given by 
\begin{align*}
M_{t}=\int_{0}^{t}\int_{\R^{d}\setminus \{0\}}[\chi(X_{s-}^{\mathbb{T}^{d}}+[y])-\chi(X_{s-}^{\mathbb{T}^{d}})]\hat{\pi}(ds,dy).
\end{align*} 
Using the estimate from \cite[Lemma 8.22]{peszat_zabczyk_2007} and the mean-value theorem, we obtain
\begin{align*}
\E[\sup_{t\in[0,T]}\abs{M_{nt}}^{2}]&\lesssim \int_{0}^{nT}\int_{\R^{d}\setminus\{0\}}\E[\abs{\chi(X_{s-}^{\mathbb{T}^{d}}+[y])-\chi(X_{s-}^{\mathbb{T}^{d}})}^{2}]\mu(dy)ds\\&=\int_{0}^{nT}\int_{\R^{d}\setminus\{0\}}\E[\abs{\chi(X_{s}^{\mathbb{T}^{d}}+[y])-\chi(X_{s}^{\mathbb{T}^{d}})}^{2}]\mu(dy)ds
\allowdisplaybreaks
\\&\leqslant \int_{0}^{nT}\int_{B(0,1)^{c}}\E[\abs{\chi(X_{s}^{\mathbb{T}^{d}}+[y])-\chi(X_{s}^{\mathbb{T}^{d}})}^{2}]\mu(dy)ds\\&\qquad+ \int_{0}^{nT}\int_{B(0,1)\setminus\{0\}}\E[\abs{\chi(X_{s}^{\mathbb{T}^{d}}+[y])-\chi(X_{s}^{\mathbb{T}^{d}})}^{2}]\mu(dy)ds\\&\leqslant 2nT\mu(B(0,1)^{c})\norm{\chi}_{L^{\infty}(\mathbb{T}^{d})^{d}}^{2}+2nT\norm{\nabla\chi}_{L^{\infty}(\mathbb{T}^{d})^{d\times d}}^{2}\int_{B(0,1)\setminus\{0\}}\abs{y}^{2}\mu(dy)\\&\lesssim nT.
\end{align*}
Hence, we conclude 
\begin{align}\label{eq:M-bound}
\E[\sup_{t\in[0,T]}\abs{n^{-1/\alpha}M_{nt}}^{2}]\lesssim Tn^{1-2/\alpha}
\end{align} and since $\alpha<2$, we obtain the claimed convergence to zero.\\ As the $J_{1}$-metric (for definition, see \cite[Chapter VI, Equation 1.26]{Jacod2003}) can be bounded by the uniform norm, \eqref{eq:M-bound} implies in particular, that the process $(n^{-1/\alpha}M_{nt})_{t\in[0,T]}$ converges to the constant zero process in probability with respect to the $J_{1}$-topology on the Skorokhod space $D([0,T],\R^{d})$. Furthermore, $(n^{-1/\alpha}L_{nt})_{t\geqslant 0}\stackrel{d}{=}(L_{t})_{t\geqslant 0}$. Using \cite[Chapter VI, Proposition 3.17]{Jacod2003} and that the constant process is continuous, we thus obtain that $(n^{-1/\alpha}X_{nt})_{t\geqslant 0}$ convergences in distribution in $D([0,T],\R^{d})$ to the $\alpha$-stable process $(\tilde{L}_{t})_{t\in [0,T]}$, that has the same law as $(L_{t})_{t\in[0,T]}$. 
\end{proof}

\noindent Utilizing the correspondence of the solution of the SDE (i.e.~the solution of the martingale problem) to the parabolic generator PDE via Feynman-Kac, we can now show the corresponding periodic homogenization result for the PDE as a corollary.
\begin{corollary}\label{cor:PDEhomog}
Let $F$ and $F^{\R^{d}}$ be as in \cref{thm:main-thm-ph}. Assume moreover that $\langle F\rangle_{\pi}=0$ and let $f\in C_{b}(\R^{d})$. Let $T>0$ and let $u\in D_{T}=\{u\in C_{T}\calC^{\alpha+\beta}\cap C^{1}_{T}\calC^{\beta}\mid u^{\sharp}:=u-\nabla u\para I(F)\in C_{T}\calC^{2(\alpha+\beta)-1}\cap C^{1}_{T}\calC^{\alpha+2\beta-1}\}$ with $I_{t}(f):=\int_{0}^{t}P_{t-s}(f)ds$ be the mild solution of the singular parabolic PDE
\begin{align*}
(\partial_{t}-\mathfrak{L})u=0,\quad u_{0}=f^{\epsilon},
\end{align*} where $f^{\epsilon}(x):=f(\epsilon x)$. Let $u^{\epsilon}(t,x):=u(\epsilon^{-\alpha}t,\epsilon^{-1} x)$ with $u^{\epsilon}(0,\cdot)=f$.\\ 
Let furthermore, for $\alpha=2$ and $-\La=\frac{1}{2}\Delta$, $\overline{u}$ be the solution of 
\begin{align*}
(\partial_{t}-D:\nabla\nabla)\overline{u}=0,\quad \overline{u}_{0}=f,
\end{align*} with notation $D:\nabla\nabla:=\sum_{i,j=1,...,d}D(i,j)\partial_{x_{i}}\partial_{x_{j}}$,\\  and for $\alpha\in (1,2)$, let $\overline{u}$ be the solution of
\begin{align*}
(\partial_{t}+\La)\overline{u}=0,\quad \overline{u}_{0}=f.
\end{align*} 
Then, for any $t\in (0,T]$, $x\in\R^{d}$, we have the convergence $u^{\epsilon}_{t}(x)\to \overline{u}_{t}(x)$ for $\epsilon\to 0$.
\end{corollary}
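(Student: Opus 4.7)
\textbf{Plan of proof for \cref{cor:PDEhomog}.} The strategy is the classical one: identify $u$ with an expectation via Feynman–Kac, rescale so that the rescaling on $X$ matches the CLT scaling of \cref{thm:main-thm-ph}, and conclude by weak convergence applied to the bounded continuous function $f$.

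First I would establish the probabilistic representation $u(s,y)=\mathbb{E}_y[f^\epsilon(X_s)]=\mathbb{E}[f(\epsilon X^y_s)]$ where $X^y$ is the unique martingale solution of \eqref{eq:sde} with $X^y_0=y$. For mollified drifts $F^m\to F$ in $\mathcal{X}^{\beta,\gamma}_\infty$ and mollified initial data in $\calC^3$ the identity is classical; passing to the limit uses the continuity of the Kolmogorov solution map in $D_T$ (invoked in the paper through \cite{kp-sk}), the weak convergence $X^m\Rightarrow X$ established in the proof of \cref{thm:mainthm1}, and boundedness of $f$ for the dominated-convergence step. Setting $n:=\epsilon^{-\alpha}$, so that $\epsilon=n^{-1/\alpha}$ and $\epsilon^{-1}x=n^{1/\alpha}x$, the rescaling gives
\[
u^\epsilon_t(x)=u(\epsilon^{-\alpha}t,\epsilon^{-1}x)=\mathbb{E}\bigl[f\bigl(x+n^{-1/\alpha}(X^{n^{1/\alpha}x}_{nt}-n^{1/\alpha}x-nt\langle F\rangle_\pi)\bigr)\bigr],
\]
where I used $\langle F\rangle_\pi=0$ to absorb the centering for free.

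Next I would apply the functional CLT, \cref{thm:main-thm-ph}, to the inner process. Although the theorem is formally stated with $X_0=x$ fixed, the argument in its proof extends verbatim to an arbitrary starting point $y\in\R^d$: the boundary contributions $n^{-1/\alpha}\chi(X^{\mathbb{T}^d}_0)$ and $n^{-1/\alpha}\chi(X^{\mathbb{T}^d}_{nt})$ vanish uniformly in $y$ because $\chi\in L^\infty(\mathbb{T}^d)$ by \cref{thm:poisson-eq}, and the ergodic input \cref{lem:ergodic-thm} together with the convergence of the martingale quadratic variation rests only on the pointwise spectral gap \eqref{eq:p-sp-est}, which is initial-condition-free. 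Hence the rescaled increment converges in law to $\sqrt{D}W_t$ (for $\alpha=2$) or to $\tilde L_t$ (for $\alpha\in(1,2)$). Since $f\in C_b(\R^d)$, the portmanteau theorem yields
\[
u^\epsilon_t(x)\;\longrightarrow\;\mathbb{E}[f(x+\sqrt{D}W_t)]\qquad\text{or}\qquad\mathbb{E}[f(x+\tilde L_t)],
\]
respectively; and both limits equal $\overline{u}_t(x)$ by the standard Feynman–Kac formula for the heat equation with constant coefficients, respectively for $(\partial_t+\La)\overline{u}=0$.

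The only delicate step is the Feynman–Kac identity in the singular regime for merely $C_b$ initial data: the admissible terminal conditions in the martingale problem \cref{def:martp} are $\calC^3$, so one cannot plug in $f^\epsilon$ directly. The clean route is a double approximation, simultaneously mollifying $F$ inside $\mathcal{X}^{\beta,\gamma}_\infty$ and $f^\epsilon$ in $C_b$; the corresponding smooth Feynman–Kac formulas then pass to the limit using continuity of the Kolmogorov solution map and tightness of the approximating martingale solutions. All remaining ingredients (weak convergence of expectations, identification of the limit PDE) are routine once this is in place.
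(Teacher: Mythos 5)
Your proposal is correct and takes essentially the same route as the paper's proof: Feynman--Kac representation $u^\epsilon_t(x)=\E_{X_0=\epsilon^{-1}x}[f(\epsilon X_{\epsilon^{-\alpha}t})]$, then the functional CLT of \cref{thm:main-thm-ph} (applied to a starting point of the form $\epsilon^{-1}x$, which is harmless since the boundary terms are uniformly bounded and the ergodic inputs are starting-point-free), and finally portmanteau plus Feynman--Kac for the constant-coefficient limit operator. The only minor difference is that the paper establishes the Feynman--Kac identity by approximating $f$ alone by $\calC^3$ functions (since $X$ is already the martingale solution for the singular drift $F$), whereas you propose a double approximation of both $F$ and $f$; both work, and the paper's single approximation is slightly more economical.
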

\begin{remark}
Note that $u^{\epsilon}$ solves $(\partial_{t}-\mathfrak{L}^{\epsilon})u^{\epsilon}=0$, $u^{\epsilon}_{0}=f$ with operator $\mathfrak{L}^{\epsilon}g=-\La g+\epsilon^{1-\alpha} F(\epsilon^{-1} \cdot)\nabla g$.
\end{remark}
\begin{remark}
If $\alpha=2$ and $F$ is of gradient-type, that is, $F=\nabla f$ for $f\in\calC^{1+\beta}$ ($f$ is a continuous function, as $1+\beta>0$), the invariant measure is explicitly given by $d\pi=c^{-1}e^{-f(x)}dx$ with suitable normalizing constant $c>0$, since the operator is of divergence form, $\mathfrak{L}=e^{f}\nabla \cdot(e^{-f}\nabla\cdot)$. Then it follows that $\langle F\rangle_{\pi}=\int_{\mathbb{T}^{d}}\nabla e^{-f(x)}dx=0$. Thus, $F$ satisfies the assumptions of \cref{cor:PDEhomog}.
\end{remark}
\begin{proof}[Proof of \cref{cor:PDEhomog}] 
Notice that $(\tilde{u}_{s}:=u_{t-s})_{s\in[0,t]}$ solves the backward Kolmogorov equation $(\partial_{s}+\mathfrak{L})\tilde{u}=0, \tilde{u}(t,\cdot)=f^{\epsilon}$.
Approximating $f$ by $\calC^{3}(\R^{d})$ functions and using that $X$ solves the $(\partial_{t}+\mathfrak{L},x)$-martingale problem, we obtain 
\begin{align*}
u^{\epsilon}(t,x)=\E_{X_{0}=\epsilon^{-1}x}[f(\epsilon X_{\epsilon^{-\alpha}t})].
\end{align*} 
The stated convergence then follows from \cref{thm:main-thm-ph}. Indeed, if $X_{0}=\epsilon^{-1}x$, then $\epsilon X_{\epsilon^{-2}\cdot}\to W^{x}$ in distribution, where $W^{x}$ is the Brownian motion started in $x$ with covariance $D$, respectively $\epsilon X_{\epsilon^{-\alpha}\cdot}\to L^{x}$ if $\alpha\in (1,2)$ for the $\alpha$-stable process $L$ with generator $(-\La)$ and $L_{0}=x$. The Feynman-Kac formula for the limit process then gives that the limit of $(u^{\epsilon}(t,x))$ equals $\overline{u}(t,x)=\E[f(W^{x})]$ if $\alpha=2$, respectively $\overline{u}(t,x)=\E[f(L^{x})]$ if $\alpha\in (1,2)$.
\end{proof}

\begin{remark}[Brox diffusion with Lévy noise]\label{rem:p-Brox}
We can apply our theory to obtain the long-time behaviour of the periodic Brox diffusion with Lévy noise (see \cite{kp} for the construction).
As $\alpha\in (1,2]$, \cref{thm:main-thm-ph} yields that $\abs{X_{t}}\sim t^{1/\alpha}$ for $t\to\infty$.\\
In the non-periodic situation, the long-time behaviour of the Brox diffusion with Brownian noise is however very different. Brox \cite{Brox1986} proved, that the diffusion gets trapped in local minima of the white noise environment and thus slowed down (that is, for almost all environments: $\abs{X_{t}}\sim \log(t)^{2}$ for $t\to\infty$, cf. \cite[Theorem 1.4]{Brox1986}). In the non-periodic pure stable noise case, the long-time behaviour of the Brox diffusion is an open problem, that we leave for future research. 
\end{remark}

\end{section}

\appendix
\begin{section}{Appendix}\label{Appendix A}
\begin{proof}[Proof of \cref{lem:periodic-semi-est}]
To show \eqref{eq:p-la}, we notice that by the isometry of the spaces $L^{2}(\mathbb{T}^{d})$, $l^{2}(\mathbb{Z}^{d})$ by the Fourier transform,
\begin{align*}
\norm{\Delta_{j}\La u}_{L^{2}(\mathbb{T}^{d})}^{2}=\sum_{k\in\mathbb{Z}^{d}}\abs{\rho_{j}(k)\psi^{\alpha}_{\nu}(k)\hat{u}(k)}^{2}.
\end{align*}
Due to $\rho_{j}(k)\neq 0$ only if $\abs{k}\sim 2^{j}$ and $\abs{\psi^{\alpha}_{\nu}(k)}\lesssim\abs{k}^{\alpha}$, we obtain that
\begin{align*}
\norm{\Delta_{j}\La u}_{L^{2}(\mathbb{T}^{d})}^{2}\lesssim 2^{2j\alpha}\sum_{k\in\mathbb{Z}^{d}}\abs{\rho_{j}(k)\hat{u}(k)}^{2}=2^{2j\alpha}\norm{\Delta_{j}u}_{L^{2}(\mathbb{T}^{d})}^{2}
\end{align*} and thus 
\begin{align*}
\norm{\La u}_{\calC^{\beta-\alpha}_{2}(\mathbb{T}^{d})}=\sup_{j}2^{j(\beta-\alpha)}\norm{\Delta_{j}\La u}_{L^{2}(\mathbb{T}^{d})}\lesssim\sup_{j} 2^{j\beta}\norm{\Delta_{j}u}_{L^{2}(\mathbb{T}^{d})}=\norm{u}_{\calC^{\beta}_{2}(\mathbb{T}^{d})}.
\end{align*}
To show \eqref{eq:p-semi}, we again use the isometry, such that
\begin{align*}
\norm{\Delta_{j} P_{t}u}_{L^{2}(\mathbb{T}^{d})}^{2}=\sum_{k\in\mathbb{Z}^{d}}\abs{\rho_{j}(k)\exp(-t\psi^{\alpha}_{\nu}(k))\hat{u}(k)}^{2}.
\end{align*}
For $j=-1$, $\rho_{j}$ is supported in a ball around zero and as $\abs{\exp(-t\psi^{\alpha}_{\nu}(k))}\leqslant 1$, the estimate $\norm{\Delta_{j} P_{t}u}_{L^{2}(\mathbb{T}^{d})}^{2}\lesssim (t^{-\theta/\alpha}\vee 1)2^{\theta}\sum_{k\in\mathbb{Z}^{d}}\abs{\rho_{j}(k)\hat{u}(k)}^{2}$ holds trivially for $\theta\geqslant 0$. For $j>-1$, $p_{j}$ is supported away from zero and we can use that $\exp(-t\psi^{\alpha}_{\nu}(\cdot))$ is a Schwartz function away from $0$ and thus, for $\abs{k}>0$, $\abs{\exp(-t\psi^{\alpha}_{\nu}(k))}\lesssim (t\psi^{\alpha}_{\nu}(k)+1)^{-\theta/\alpha}\lesssim t^{-\theta/\alpha}\abs{k}^{-\theta}$, for any $\theta\geqslant 0$. Thus, for $j>-1$, we obtain
\begin{align*}
\norm{\Delta_{j} P_{t}u}_{L^{2}(\mathbb{T}^{d})}^{2}\leqslant 2^{-2j\theta}t^{-\theta/\alpha}\sum_{k\in\mathbb{Z}^{d}}\abs{\rho_{j}(k)\hat{u}(k)}^{2}=2^{-2j\theta}t^{-\theta/\alpha}\norm{\Delta_{j}u}_{L^{2}(\mathbb{T}^{d})}^{2},
\end{align*} such that together \eqref{eq:p-semi} follows. To obtain the remaining estimate, we argue in a similar manner using that, due to Hölder-continuity of the exponential function, for $\theta/\alpha\in [0,1]$,  $\abs{\exp(-t\psi^{\alpha}_{\nu}(k))-1}\leqslant\abs{t\psi^{\alpha}_{\nu}(k)}^{\theta/\alpha}\leqslant t^{\theta/\alpha}\abs{k}^{\theta}$.
\end{proof}
\begin{proof}[Proof of \cref{lem:exp-schauder}]
By the assumption of vanishing zero-order Fourier mode, we have 
\begin{align*}
P_{t}g=\sum_{\abs{k}\geqslant 1}\exp(-t\psi^{\alpha}_{\nu}(k))\hat{g}(k)e_{k}.
\end{align*} 
Thus, we obtain by $\psi^{\alpha}_{\nu}(k)\geqslant c\abs{k}^{\alpha}$ for some $c>0$ (follows from \cref{ass}) the trivial estimate
\begin{align*}
\norm{\Delta_{j}(P_{t}g)}_{L^{2}(\mathbb{T}^{d})}= \sum_{\abs{k}\geqslant 1}\abs{p_{j}(k)\exp(-t\psi^{\alpha}_{\nu}(k))\hat{g}(k)}^{2}\leqslant \norm{g}_{\calC^{\beta}_{2}(\mathbb{T}^{d})}2^{-j\beta}\exp(-tc).
\end{align*} 
Together with \cref{lem:periodic-semi-est}, we then obtain for any $\theta\geqslant 0$,
\begin{align*}
\norm{\Delta_{j}(P_{t}g)}_{L^{2}(\mathbb{T}^{d})}\lesssim \norm{g}_{\calC^{\beta}_{2}(\mathbb{T}^{d})}\min\paren[\big]{2^{-j\beta}\exp(-tc),\, 2^{-j(\beta+\theta)}(t^{-\theta/\alpha}\vee 1)}.
\end{align*} 
The claim thus follows by interpolation.
\end{proof}
\end{section}

\section*{Acknowledgements}

H.K.~is supported by the Austrian Science Fund (FWF) Stand-Alone programme P 34992. Part of the work was done when H.K. was employed at Freie Universität Berlin and funded by the DFG under Germany's Excellence Strategy - The Berlin Mathematics Research Center MATH+ (EXC-2046/1, project ID: 390685689). N.P.~ gratefully acknowledges financial support by the DFG via Research Unit FOR2402 and through the grant CRC 1114 "Scaling Cascades in Complex Systems".


\begin{thebibliography}{CCKW21}

\bibitem[Ari10]{Arisawa}
Mariko Arisawa.
\newblock Homogenization of a class of integro-differential equations with
  {Lé}vy operators.
\newblock {\em Comm. Partial Differential Equations}, 34, 2010.

\bibitem[BCD11]{Bahouri2011}
Hajer Bahouri, Jean-Yves Chemin, and Rapha\"el Danchin.
\newblock {\em Fourier Analysis and Nonlinear Partial Differential Equations}.
\newblock Grundlehren der mathematischen Wissenschaften. Springer Berlin
  Heidelberg, 2011.

\bibitem[BLP78]{Bensoussan1978}
Alain Bensoussan, Jacques-Louis Lions, and George Papanicolaou.
\newblock {\em Asymptotic analysis for periodic structures}, volume~5 of {\em
  Studies in Mathematics and its Applications}.
\newblock North-Holland Publishing Co., Amsterdam-New York, 1978.

\bibitem[Bro86]{Brox1986}
Thomas~M. Brox.
\newblock A one-dimensional diffusion process in a {W}iener medium.
\newblock {\em Ann. Probab.}, 14(4):1206--1218, 1986.

\bibitem[CC18]{Cannizzaro2018}
Giuseppe Cannizzaro and Khalil Chouk.
\newblock Multidimensional {SDE}s with singular drift and universal
  construction of the polymer measure with white noise potential.
\newblock {\em Ann. Probab.}, 46(3):1710--1763, 2018.

\bibitem[CCKW21]{Chen2021}
Xin Chen, Zhen-Qing Chen, Takashi Kumagai, and Jian Wang.
\newblock {Periodic homogenization of nonsymmetric Lévy-type processes}.
\newblock {\em Ann. Probab.}, 49(6):2874 -- 2921, 2021.

\bibitem[CFG17]{cfg}
Giuseppe Cannizzaro, Peter~K. Friz, and Paul Gassiat.
\newblock Malliavin calculus for regularity structures: The case of {gPAM}.
\newblock {\em J. Func. Anal.}, 272:363 -- 419, 2017.

\bibitem[EK86]{Ethier1986}
Stewart~N. Ethier and Thomas~G. Kurtz.
\newblock {\em Markov processes: Characterization and convergence}.
\newblock Wiley series in probability and mathematical statistics. John Wiley
  \& Sons, New York, 1986.

\bibitem[Fra07]{Franke2007}
Brice Franke.
\newblock A functional non-central limit theorem for jump-diffusions with
  periodic coefficients driven by stable {L\'e}vy-noise.
\newblock {\em J. Theoret. Probab.}, 20:1087--1100, 2007.

\bibitem[GIP15]{Gubinelli2015Paracontrolled}
Massimiliano Gubinelli, Peter Imkeller, and Nicolas Perkowski.
\newblock Paracontrolled distributions and singular {PDE}s.
\newblock {\em Forum of Mathematics, Pi}, 3(e6), 2015.

\bibitem[GZ02]{Guionnet02}
Alice Guionnet and B.~Zegarlinski.
\newblock Lectures on {Logarithmic} {Sobolev} {Inequalities}.
\newblock {\em S\'eminaire de probabilit\'es de Strasbourg}, 36:1--134, 2002.

\bibitem[HDS18]{Huang2018}
Qiao Huang, Jinqiao Duan, and Renming Song.
\newblock Homogenization of stable-like {F}eller processes.
\newblock {\em arXiv preprint arXiv:1812.11624}, 2018.

\bibitem[HDS22]{Huang2022}
Qiao Huang, Jinqiao Duan, and Renming Song.
\newblock Homogenization of nonlocal partial differential equations related to
  stochastic differential equations with {L}\'evy noise.
\newblock {\em Bernoulli}, 28:1648--1674, 2022.

\bibitem[HP08]{Hairer2008}
Martin Hairer and Etienne Pardoux.
\newblock Homogenization of periodic linear degenerate {PDEs}.
\newblock {\em J. Func. Anal.}, 255(9):2462--2487, 2008.
\newblock Special issue dedicated to Paul Malliavin.

\bibitem[JS03]{Jacod2003}
Jean Jacod and Albert~N. Shiryaev.
\newblock {\em Limit Theorems for Stochastic Processes}.
\newblock Grundlehren der mathematischen Wissenschaften. Springer Berlin
  Heidelberg, 2nd edition, 2003.

\bibitem[KLO12]{klo}
Tomasz Komorowski, Claudio Landim, and Stefano Olla.
\newblock {\em Fluctuations in {M}arkov processes}, volume 345 of {\em
  Grundlehren der Mathematischen Wissenschaften}.
\newblock Springer Heidelberg, 2012.
\newblock Time symmetry and martingale approximation.

\bibitem[KP22]{kp}
Helena Kremp and Nicolas Perkowski.
\newblock Multidimensional {SDE} with distributional drift and {Lé}vy noise.
\newblock {\em Bernoulli}, 28(3):1757--1783, 2022.

\bibitem[KP23a]{kp-sk}
Helena Kremp and Nicolas Perkowski.
\newblock Fractional {K}olmogorov equations with singular paracontrolled
  terminal condition.
\newblock {\em arXiv preprint arXiv:2309.14733}, 2023.

\bibitem[KP23b]{kp-wsmp}
Helena Kremp and Nicolas Perkowski.
\newblock Rough weak solutions for singular {L}évy {SDE}s.
\newblock {\em arXiv preprint arXiv:2309.15460}, 2023.

\bibitem[KPZ19]{Krassmann}
Moritz Kassmann, Andrey Piatnitski, and Elena Zhizhina.
\newblock Homogenization of {L}évy-type operators with oscillating
  coefficients.
\newblock {\em SIAM J. Math. Anal.}, 51:3641--3665, 2019.

\bibitem[KV86]{Kipnis1986}
Claude Kipnis and S.~R.~Srinivasa Varadhan.
\newblock Central limit theorem for additive functionals of reversible {M}arkov
  processes and applications to simple exclusions.
\newblock {\em Comm. Math. Phys.}, 104(1):1--19, 1986.

\bibitem[MP19]{Martin2017}
J\"{o}rg Martin and Nicolas Perkowski.
\newblock Paracontrolled distributions on {B}ravais lattices and weak
  universality of the 2d parabolic {A}nderson model.
\newblock {\em Ann. Inst. Henri Poincar\'{e} Probab. Stat.}, 55(4):2058--2110,
  2019.

\bibitem[MW17]{Mourrat2017Dynamic}
Jean-Christophe Mourrat and Hendrik Weber.
\newblock The dynamic $\phi^4_3$ model comes down from infinity.
\newblock {\em Comm. Math. Phys.}, 356(3):673--753, 2017.

\bibitem[{\O}ks03]{Oksendal2003}
Bernt {\O}ksendal.
\newblock {\em Stochastic differential equations}.
\newblock Universitext. Springer-Verlag, Berlin, sixth edition, 2003.
\newblock An introduction with applications.

\bibitem[Par98]{Pardoux1998}
Etienne Pardoux.
\newblock {Backward stochastic differential equations and viscosity solutions
  of systems of semilinear parabolic and elliptic PDEs of second order}.
\newblock {\em Stochastic Analysis and Related Topics: The Geilo Workshop,
  1996}, pages 79--127, 1998.

\bibitem[PS08]{Pavliotis2008}
Grigorios~A. Pavliotis and Andrew~M. Stuart.
\newblock {\em Multiscale methods}, volume~53 of {\em Texts in Applied
  Mathematics}.
\newblock Springer, New York, 2008.
\newblock Averaging and homogenization.

\bibitem[PvZ22]{PvZ}
Nicolas Perkowski and Willem van Zuijlen.
\newblock Quantitative heat-kernel estimates for diffusions with distributional
  drift.
\newblock {\em Potential Anal.}, 2022.

\bibitem[PZ07]{peszat_zabczyk_2007}
Szymon Peszat and Jerzy Zabczyk.
\newblock {\em Stochastic Partial Differential Equations with Lévy Noise: An
  Evolution Equation Approach}.
\newblock Encyclopedia of Mathematics and its Applications. Cambridge
  University Press, 2007.

\bibitem[RW01]{RoecknerWang}
Michael Röckner and Feng-Yu Wang.
\newblock Weak {P}oincaré inequalities and {L2}-convergence rates of {M}arkov
  semigroups.
\newblock {\em J. Func. Anal.}, 185(2):564--603, 2001.

\bibitem[RY99]{Revuz1999}
Daniel Revuz and Marc Yor.
\newblock {\em Continuous martingales and {B}rownian motion}.
\newblock Springer Heidelberg, 3rd edition, 1999.

\bibitem[San16]{Sandric}
Nikola Sandrić.
\newblock Homogenization of periodic diffusion with small jumps.
\newblock {\em J. Math. Anal. Appl.}, 435(1):551--577, 2016.

\bibitem[Sat99]{Sato1999}
Ken-iti Sato.
\newblock {\em {Lé}vy Processes and Infinitely Divisible Distributions}.
\newblock Cambridge Studies in Advanced Mathematics. Cambridge University
  Press, 1999.

\bibitem[Sch10]{Schwab}
Russell~W. Schwab.
\newblock Periodic homogenization for nonlinear integro-differential equations.
\newblock {\em SIAM J. Math. Anal.}, 42(6):2652--2680, 2010.

\bibitem[ST87]{Schmeisser1987}
Hans-Jürgen Schmeisser and Hans Triebel.
\newblock {\em Topics in {F}ourier analysis and function spaces}, volume~42 of
  {\em Mathematik und ihre Anwendungen in Physik und Technik}.
\newblock Akademische Verlagsgesellschaft Geest \& Portig K.-G., Leipzig, 1987.

\bibitem[Zvo74]{Zvonkin1974}
Alexander~K. Zvonkin.
\newblock A transformation of the phase space of a diffusion process that will
  remove the drift.
\newblock {\em Mat. Sb. (N.S.)}, 93(135):129--149, 152, 1974.

\end{thebibliography}
\end{document}